\def\amslatex{$\mathcal{A}\kern-.1667em\lower.5ex\hbox{$M$}\kern-.125em\mathcal{S}$-\LaTeX}
\DeclareSymbolFont{SY}{U}{psy}{m}{n}
\DeclareMathSymbol{\emptyset}{\mathord}{SY}{'306}
\newtheorem{theorem}{Theorem}[section]
\newtheorem{lemma}[theorem]{Lemma}
\newtheorem{corollary}[theorem]{Corollary}
\newtheorem{proposition}[theorem]{Proposition}
\newtheorem{problem}[theorem]{Problem}
\theoremstyle{definition}
\newtheorem{definition}[theorem]{Definition}
\newtheorem{remark}[theorem]{Remark}
\newcommand{\N}{\mathbb{N}}
\numberwithin{equation}{section}
\begin{document}

\title[On the third  problem of Halmos]
{{\bf On the third  problem of Halmos on Banach spaces}}

\author{ Lixin Cheng$^\dag$, Junsheng Fang$^\ddag$, Chunlan Jiang$^\natural$ }
\address{  Lixin Cheng$^\dag$:  School of Mathematical Sciences, Xiamen University,
 Xiamen 361005, China}
\address{ Junsheng Fang$^\ddag$:  School of Mathematical Sciences, Hebei Normal University, Shijiazhuang, Hebei 050016, China}
\address{ Chunlan Jiang$^\natural$:  School of Mathematical Sciences, Jilin University,
 Changchun, Jilin 130012, China}
 \email{$^\dag$: lxcheng@xmu.edu.cn\;\;(Lixin Cheng)}
 \email{$^\ddag$: jfang@hebtu.edu.cn\;\;(Junsheng Fang)}
 \email{$^\natural$: cljiang@hebtu.edu.cn\;\;(Chunlan Jiang)}

\thanks{$^\dag$ Support in partial
by the Natural Science Foundation of China, grant no. 11731010.\\\indent $^\ddag$ Support in partial
by the Natural Science Foundation of China, grant no.  12071109.\\\indent $^\natural$ Support in partial
by the Natural Science Foundation of China, grant no.  11471271 \& 11471270.}

\date{}

\begin{abstract}
 Assume that $X$ is a complex separable infinite dimensional Banach space and $\mathcal{B}(X)$ denotes the Banach algebra  of all bounded linear operators from  $X$ to itself.
In 1970, P.R. Halmos raised ten open problems in Hilbert spaces. The third one is  the following: If an intransitive operator $T$ has an inverse, is its inverse also intransitive? This question is closely related to the invariant subspace problem. Ever since Enflo's celebrated counterexample on $\ell_1$ answered  the invariant subspace problem in negative, the Banach space setting of the third question of Halmos has become more interesting. In this paper, we give an affirmative answer to this problem under certain spectral conditions. As an application, we show that for an invertible operator $T$ with Dunford's Property ($C$), if $T^{-1}$ is intransitive and there exists a connected component $\Omega$  of ${int}\sigma(T^{-1})^\land$ which is off the origin such that $\Omega\cap\rho_F(T^{-1})\neq \emptyset$, then $T$ is also intransitive. In the end of the paper, we show that a sufficient and necessary condition for that there exists a bounded linear operator without non-trivial invariant subspaces on the infinite dimensional space $L_1(\Omega,\sum,\mu)$ (resp.,  $C(K)$, the space of bounded continuous functions on a complete metric space $K$) is that $(\Omega,\sum,\mu)$ is $\sigma$-finite (resp., $K$ is compact).
\end{abstract}

\keywords{ Invariant subspace,  bounded linear operator,  Banach space}

\subjclass{}

\maketitle

\section{Introduction}

Assume that $X$ is a complex separable infinite dimensional Banach space and $\mathcal{B}(X)$ denotes the Banach algebra  of all bounded linear operators from  $X$ to itself.
We say that a subspace $Y\subset X$ is an invariant subspace of an operator $T\in\mathcal{B}(X)$ if $TY\subset Y$;
 $T\in \mathcal{B}(X)$ is called intransitive if it leaves invariant spaces other than the trivial space $0$, or, the whole space $X$; otherwise it is transitive.
 The following famous invariant subspace problem was raised a century ago.

\begin{problem}[The invariant subspace problem]
 Does every $T\in \mathcal{B}(X)$ have a nontrivial invariant subspace?
\end{problem}
This problem has been intensively studied, especially for Hilbert spaces, and positive results have been obtained for many classes of operators. (See, for instance \cite{RR2,LS2}.)

The celebrated counterexample of Enflo \cite{En} gave a negative answer to the problem. It was done by showing that  there is an operator
 $T\in\mathcal B(\ell_1)$ which does not admit a nontrivial invariant subspace of $\ell_1$.
 The paper has existed in manuscript form for about 12 years prior to its publication, the delay being due to its enormous complexity.
 In the meantime, constructions of operators without invariant subspaces have been published by
 C. Read \cite{R1,R2,R3} 
and B. Beauzamy \cite{Bea}. Read \cite{R2,R3} showed that every  infinite dimensional separable Banach space $X$ admits a bounded linear operator without nontrivial invariant subspaces whenever $X$ contains $\ell_1$, or, $c_0$ as its complemented subspaces.  On the other hand, Beauzamy's example possesses a cyclicity property stronger than the nonexistence of invariant subspaces. The construction of an operator without a nontrivial invariant subspace on a nuclear Fr\'{e}chet space (a rather different problem) has been achieved by A. Atzmon \cite{at}.  In 2011,
S.A. Argyros and R.G. Haydon \cite{Ar} constructed a remarkable Banach space $X_K$ that solves the scalar-plus-compact problem.
  The space $X_K$ is the first example of a Banach space for which it is known that every bounded linear operator on the space has the form $\lambda+K$
  where $\lambda$ is a real scalar and $K$ is a compact operator, it is also the first example of an infinite-dimensional space on which
  every bounded linear operator has a nontrivial invariant subspace.

   However, there are many important questions left unknown.
   For example, we do not know that whether every bounded linear operator on a separable Hilbert space admits a nontrivial invariant subspace; we do not even know  if there exists a bounded linear operator on a reflexive Banach space that does not have a non-trivial invariant subspace. Therefore, mathematicians have not stopped the pace of further research on these and related questions.      

A problem closely related to the invariant subspace problem is the third problem of Halmos \cite{Hal}.
\begin{problem}[The third problem of Halmos]\label{1.1} If an intransitive operator has an inverse, is its inverse also intransitive?
\end{problem}
This problem has been around for over fifty years. Not only is the answer to this problem still unknown, but little research has progressed. 16 years ago,  R.G. Douglas, C. Foias and C. Pearcy  hoped their joint paper \cite{DFP} will stimulate
interest in this problem of Halmos -- the only problem of the ten set forth in \cite{Hal}
which presently remains open.

In 1970, P.R. Halmos \cite{Hal} raised ten open problems in a separable Hilbert space.  The first problem asks whether the set of cyclic operators has a nonempty interior and has been subsequently answered in negative by  P.A. Fillmore, J.G. Stampfli and J.P. Williams \cite{FSW}. The second problem asks if every part of a weighted shift is similar to a weighted shift. Halmos himself \cite{Hal} presents a characterization due to C. Berger (unpublished) under the assumption that a weighted shift is subnormal.
   Whether the inverse of an operator that is known to have an invariant subspace must also have one is the third one (Problem 1.2). In 1975, D. Hadwin \cite[Theorem C]{Ha2} showed that for every separable complex Banach space there is an unbounded linear bijection $S$ on it with a nontrivial invariant subspace so that its algebraic inverse $S^{-1}$ fails to have a  nontrivial invariant subspace.    
   The fourth problem asks whether every normal operator on a separable space is the sum of a diagonal operator and a compact one. The first proof of this is due to I.D. Berg (unpublished), while an elegant one is  due to Halmos himself. 
    The fifth problem asks if every subnormal Toeplitz operator is either analytic or normal, which was  resolved independently by  C.C. Cowen \cite{CC}, J. Long and S. Sun \cite{S1}, \cite{S2}.
      The sixth problem asks  if every polynomially bounded operator is similar to  a contraction? This problem was answered in negative by  G. Pisier \cite{Pis} in 1997. 
    Problem seven asks if  every  quasinilpotent  operator is the norm  limit  of  nilpotent  ones.  It was  solved by  C. Apostol and  D. Voiculescu \cite{Voi} in 1974.  Problem eight asks  if every operator is the norm  limit  of  reducible  ones. It was solved by D. Voiculescu \cite{Voi2} in 1976.   Problem nine asks  whether every  complete  Boolean is algebra  reflexive.   Problem ten asks  whether   every  non-trivial  strongly  closed  transitive  atomic  lattice is either  medial or self-conjugate.

Recall that a nonzero vector $x$ in $X$ is called a cyclic vector of $T$ provided $[T^kx]\equiv[T^kx]_{n=0}^\infty=X$,
where $[T^kx]_{n=0}^\infty$ denotes the norm closure of ${\rm span}\{T^kx\}_{n=0}^\infty$ in $X$. We denote by $C(T)$ the set of all cyclic vectors of $T$. 

 Problem \ref{1.1}  can be expressed in the following way: If an invertible operator $T$ has no nontrivial invariant subspaces, so does its inverse $T^{-1}$?
Or, does the assumption that $C(T)=X\setminus\{0\}$ imply $C(T^{-1})=X\setminus\{0\}?$  Therefore,
 the answer to Problem \ref{1.1} would be positive if we could show $C(T)= C(T^{-1})$ for every invertible operator $T$. Unfortunately, R.G. Douglas, C. Foias and
 C. Pearcy \cite{DFP} have shown that there are three classes of bounded linear operators in a Hilbert space so that for each $T$ in these classes
  we have $C(T)\neq C(T^{-1})$.\\

All symbols and notations presented in this paper are standard.  For an operator $T\in \mathfrak B(X)$,  $\sigma(T)$ denotes the spectrum of $T$ and $\rho(T)=\mathbb C\setminus\sigma(T)$; We use ${\rm Lat}T$ to denote the invariant subspace lattice of $T$.
For a compact subset $K$ in $\mathbb C$, the polynomial convex hull $K^\land$ of $K$
was introduced by E.L. Stout \cite[p.23]{St}.
\begin{equation}\label{1.2}
K^\land=\left\{z\in\mathbb C:\, |f(z)|\leq \sup_{w\in K}|f(w)|\,\text{for all polynomials }f \right\}.
\end{equation}
Thus, the definition of $\sigma(T)^{\land}$ is self-explanatory.
The following property is due to E.L. Stout \cite[p.24]{St}.
\begin{proposition}\label{1.3}
Let $X$ be a complex Banach space and $T\in\mathfrak B(X)$. Then $\sigma(T)^{\land}$ is the union of $\sigma(T)$ and all of the bounded connected components of $\mathbb C\setminus\sigma(T)$.
\end{proposition}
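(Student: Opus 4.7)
The statement is purely about the polynomial convex hull of a compact subset of $\mathbb{C}$, so the operator $T$ plays no role beyond providing the compact set $K:=\sigma(T)$. I will prove the two-sided inclusion $K^\land = K\cup \bigcup_j V_j$, where $\{V_j\}$ is the (at most countable) collection of bounded connected components of $\rho(T)=\mathbb{C}\setminus K$.

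The easy inclusion $K\cup\bigcup_j V_j\subseteq K^\land$. Trivially $K\subseteq K^\land$. Fix a bounded component $V_j$ and any $z\in V_j$. Since $\mathbb{C}$ is locally connected, $V_j$ is open, and its topological boundary satisfies $\partial V_j\subseteq K$ (a boundary point lying in the open set $\mathbb{C}\setminus K$ would, by local connectedness, belong to the same component $V_j$). As $V_j$ is bounded, $\overline{V_j}$ is compact. For any polynomial $f$, the maximum modulus principle applied on $\overline{V_j}$ gives
\[
|f(z)|\le \sup_{w\in\partial V_j}|f(w)|\le \sup_{w\in K}|f(w)|,
\]
so $z\in K^\land$.

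The harder inclusion $K^\land\subseteq K\cup\bigcup_j V_j$, proved contrapositively. Let $U$ be the unbounded component of $\mathbb{C}\setminus K$ and fix $z_0\in U$; I must exhibit a polynomial $p$ with $|p(z_0)|>\sup_{K}|p|$. Set $\widetilde K:=K\cup\bigcup_j V_j$; this set is compact (bounded, and closed because its complement is the open set $U$), and $\mathbb{C}\setminus\widetilde K=U$ is connected by construction. Choose $r>0$ small enough that the closed disk $\overline{B}(z_0,r)$ lies in $U$ and is disjoint from $\widetilde K$. The complement $\mathbb{C}\setminus\bigl(\widetilde K\cup\overline{B}(z_0,r)\bigr)=U\setminus\overline{B}(z_0,r)$ is a connected open subset of $\mathbb{C}$ (removing a small closed disk from a connected planar open set preserves connectedness). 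Define a function $h$ that is holomorphic on a neighborhood of $\widetilde K\cup\overline{B}(z_0,r)$ by setting $h\equiv 0$ on a neighborhood of $\widetilde K$ and $h\equiv 1$ on a neighborhood of $\overline{B}(z_0,r)$. By Runge's theorem (polynomial version, applicable because $\mathbb{C}_\infty\setminus(\widetilde K\cup\overline{B}(z_0,r))$ is connected), there exists a polynomial $p$ with $|p-h|<\tfrac{1}{3}$ on $\widetilde K\cup\overline{B}(z_0,r)$. Then $|p(z_0)|>\tfrac{2}{3}$, while $|p(z)|<\tfrac{1}{3}$ for every $z\in K\subseteq\widetilde K$, so $z_0\notin K^\land$.

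The principal obstacle is the hard direction: one must separate $z_0$ from $K$ by a polynomial, and polynomial (rather than rational) approximation requires complement-connectivity. The key move is therefore to fill in all bounded holes first (passing from $K$ to $\widetilde K$) before invoking Runge, since directly applying Runge to $K\cup\overline{B}(z_0,r)$ would leave bounded components in the complement and force the use of rational functions with unwanted poles. Everything else—compactness of $\widetilde K$, boundary containment $\partial V_j\subseteq K$, and the maximum modulus step—is routine planar topology.
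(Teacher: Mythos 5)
Your proof is correct. Note that the paper itself gives no argument for Proposition \ref{1.3}: it is quoted directly from Stout's book on uniform algebras, so there is nothing in the paper to compare against. What you have written is the standard self-contained proof of that classical fact: the inclusion $K\cup\bigcup_j V_j\subseteq K^\land$ via $\partial V_j\subseteq K$ and the maximum modulus principle, and the reverse inclusion by filling in the bounded holes to form the polynomially convex compactum $\widetilde K$ and then separating a point of the unbounded component from $\widetilde K$ with the polynomial form of Runge's theorem. The one step you state without full justification --- that $U\setminus\overline{B}(z_0,r)$ remains connected --- is true and easy to repair explicitly: since $U$ is open one may choose $r'>r$ with $\overline{B}(z_0,r')\subseteq U$, and any path in $U$ joining two points of $U\setminus\overline{B}(z_0,r)$ can be rerouted along the circle $\{|z-z_0|=r'\}$ on the interval between its first and last entry into $\overline{B}(z_0,r')$. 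With that remark added, the argument is complete.
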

If $T\in\mathfrak B(X)$ is  invertible, then for a fixed $x_0\in X$, we put
\begin{equation}\label{1.4}\mathfrak N=[T^{-k}x_0:\, k\geq 1].\end{equation}
Keeping these notations in mind, we state the first main result of paper as follows.
\begin{theorem} \label{1.5}
 Suppose that $T\in\mathfrak B(X)$ is an invertible operator, $x_0\in X$ is a nonzero noncyclic vector of $T^{-1}$, and $U=T^{-1}|_{\mathfrak N}$. If $\sigma(U)^\land\bigcap\rho_{\rm F}(U)$ has a connected component which does not contain the origin, then $T$ is intransitive, where $\rho_F(U)=\{\lambda:\, \lambda\in \mathbb{C}\,\text{and}\, U-\lambda\,\text{is a Fredholm operator}\}$.
\end{theorem}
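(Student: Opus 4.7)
The plan is to exhibit a non-trivial $T$-invariant subspace of $X$ by producing an eigenvector of the restricted operator $U=T^{-1}|_{\mathfrak{N}}$ and transferring it to $T$ via reciprocal spectral conversion. As preparation, since $x_0$ is a non-cyclic vector of $T^{-1}$, the closed subspace $[T^{-k}x_0:k\ge 0]$ is proper in $X$; hence so is the smaller closed subspace $\mathfrak{N}$, which is plainly $T^{-1}$-invariant, and $T^{-1}x_0$ is a cyclic vector for $U$ on $\mathfrak{N}$. The decisive observation is that every finite-dimensional $T^{-1}$-invariant subspace of $X$ is automatically $T$-invariant: the restriction of the injective operator $T^{-1}$ to such a finite-dimensional subspace must be a bijection by dimension count. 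Consequently, producing any nonzero $y\in\mathfrak{N}$ with $Uy=\lambda y$ will complete the proof, for such a $\lambda$ is necessarily nonzero (by injectivity of $T^{-1}$), $Ty=\lambda^{-1}y$, and $\mathbb{C}y$ is the desired nontrivial one-dimensional $T$-invariant subspace of $X$.

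The core task is therefore to extract an eigenvalue of $U$ in $\Omega$. Let $\Omega^{*}$ be the connected component of the open set $\rho_F(U)$ containing $\Omega$; by homotopy invariance of the Fredholm index, $n:=\mathrm{ind}(U-\lambda)$ is constant on $\Omega^{*}$. I would first dispatch the favourable cases. If $n\ge 1$, then every point of $\Omega^{*}$ is an eigenvalue of $U$ and we are done. If $n=0$ and $\Omega\cap\sigma(U)\ne\emptyset$, then at any such spectral point $U-\lambda$ is Fredholm of index zero and not invertible, forcing $\ker(U-\lambda)\ne 0$. In both cases the hypothesis $0\notin\Omega$ is precisely what converts the resulting $U$-eigenvalue into a $T$-eigenvalue.

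The main obstacle is the remaining scenario, in which either $n\le -1$ throughout $\Omega^{*}$ -- so $U-\lambda$ is injective on $\Omega$ and no eigenvector of $U$ is directly visible -- or, by Proposition \ref{1.3}, $\Omega$ is a bounded component of $\rho(U)$ off the origin whose topological boundary lies entirely in the essential spectrum $\sigma(U)\setminus\rho_F(U)$. Here I would pass to duality: $\mathfrak{N}^{\perp}\subset X^{*}$ is $(T^{-1})^{*}$-invariant, and under the identification $\mathfrak{N}^{\perp}\cong(X/\mathfrak{N})^{*}$ it carries the adjoint of the quotient operator $\dot{T}^{-1}$ induced by $T^{-1}$ on $X/\mathfrak{N}$. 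From a $U^{*}$-eigenvector on $\mathfrak{N}^{*}$, available from $\mathrm{coker}(U-\lambda)\ne 0$ in the index $\le -1$ case, one Hahn--Banach extends to $\tilde\phi\in X^{*}$ and solves a linear correction equation inside $\mathfrak{N}^{\perp}$ -- solvable precisely when $\dot{T}^{-1}-\lambda$ is invertible on $X/\mathfrak{N}$ -- to obtain an honest eigenvector of $(T^{-1})^{*}$ on $X^{*}$, whose kernel is a $T$-invariant closed hyperplane of $X$. In the bounded-hole subcase the same duality machinery is deployed after using the cyclicity of $U$ and the analyticity of $(U-\lambda)^{-1}$ on $\Omega$ to manufacture the required $U^{*}$-eigenvector through a local spectral construction. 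The principal technical hurdle I expect is showing that some $\lambda\in\Omega$ can be chosen outside $\sigma(\dot{T}^{-1})$ so that the correction step in $\mathfrak{N}^{\perp}$ is actually solvable; this is exactly where the hypothesis that $\Omega$ is a connected component of $\sigma(U)^{\land}\cap\rho_F(U)$ off the origin enters decisively, ruling out the pathological scenario in which the spectral contribution of the quotient action swallows all of $\Omega$.
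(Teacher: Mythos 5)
Your reduction is sound as far as it goes: a nonzero eigenvector of $T^{-1}$ (equivalently of $U$, since $U$ is a restriction of $T^{-1}$) or of $(T^{-1})^{*}$ immediately yields a common nontrivial invariant subspace of $T$ and $T^{-1}$, and your ``favourable cases'' ($\mathrm{ind}\geq 1$, or index $0$ with a spectral point in $\Omega$) are handled correctly. But the two hard cases — which are the actual content of the theorem — are not resolved, and the route you sketch for them cannot be completed from the stated hypotheses. First, your correction step in $\mathfrak N^{\perp}$ requires a $\lambda\in\Omega$ lying in $\rho\bigl((T^{-1})^{\mathfrak N}\bigr)$, and nothing in the hypothesis on $\sigma(U)^{\land}\cap\rho_F(U)$ controls the spectrum of the quotient operator: this is exactly why the paper's Theorem \ref{1.7} carries the extra assumption $\sigma(T^{-1})\subseteq\sigma((T^{-1})^{\mathfrak N})$, while Theorem \ref{1.5} is proved by a different argument that never leaves $\mathfrak N$. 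The ``principal technical hurdle'' you flag is therefore not a detail to be filled in later; it is the theorem. Second, in the bounded-hole subcase $\Omega\subseteq\rho(U)$, the operator $U^{*}-\lambda$ is invertible for every $\lambda\in\Omega$, so there is no $U^{*}$-eigenvector for any ``local spectral construction'' to manufacture, and at boundary points $\lambda\in\partial\Omega\subseteq\sigma_e(U)$ the operator is not even Fredholm. This subcase is not exotic: under the contradiction hypothesis that $T$ is transitive, the cyclicity of $x_1=T^{-1}x_0$ for $U$ together with Proposition \ref{3.11} (Fillmore--Stampfli--Williams) and Proposition \ref{3.12} (Herrero: the index $-1$ region is simply connected) forces every off-origin component of $\rho_F(U)$ into $\rho(U)$, so it is precisely the configuration one must defeat.

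The paper's proof takes a different and essentially algebraic route, arguing by contradiction entirely inside $\mathfrak N$ and the predual-type space $\mathfrak N^{\dag}=[x_n^{*}]$: assuming $T$ transitive, one builds the biorthogonal system $(x_n,x_n^{*})$ with $x_n=T^{-n}x_0$ and introduces the left inverse $V=(T-x_0\otimes x_1^{*})|_{\mathfrak N}$ of $U$, a rank-one perturbation with $VU=I_{\mathfrak N}$, $\ker V=[x_1]$, and $V\in B_1(\Omega_V)$. Cyclicity of $x_1$ for $U$ and of $x_1^{*}$ for $V^{\dag}$, fed through the Cowen--Douglas and index machinery (Lemmas \ref{4.10'}--\ref{4.13}), shows that the off-origin component $\Omega_U$ satisfies $\Omega_U\subseteq\rho(U)$ and $\Omega_U^{-1}\subseteq\rho(V)$; the identity $V(U-\lambda)=\lambda\bigl(\tfrac{1}{\lambda}-V\bigr)$ then makes $V$ invertible, contradicting $\ker V\neq\{0\}$. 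If you want to salvage your approach, you would need either to prove the solvability of the correction equation without hypotheses on the quotient (which the paper's own structure suggests is not available here), or to replace the eigenvector-hunting in the bounded-hole case by an argument that extracts a contradiction from invertibility rather than from point spectrum — which is what the left inverse $V$ accomplishes.
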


Note that $\mathfrak N\in{\rm Lat}T^{-1}.$ Then $T^{-1}$ induces a quotient operator $(T^{-1})^\mathfrak N: X/\mathfrak N\rightarrow X/\mathfrak{N}$ as follows
\begin{equation}\label{1.6} (T^{-1})^\mathfrak N(x+\mathfrak N)=T^{-1}x+\mathfrak N,\; \;x\in X.\end{equation}
Now, we are ready to state our second main result below.
\begin{theorem}\label{1.7}
Let $T\in \mathfrak B(X)$ be invertible and ${\rm int}\sigma(T^{-1})^\land$ contain  two components $\Omega_0$,  $\Omega_1$ with $0\in\Omega_0$  and $\Omega_1\bigcap\rho(T^{-1})\neq\emptyset$. If $T^{-1}$ admits a nonzero noncyclic vector  $x_0\in X$ such that $\sigma(T^{-1})\subset\sigma((T^{-1})^\mathfrak N)$, then $T$ is intransitive.
\end{theorem}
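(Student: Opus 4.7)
The plan is to reduce Theorem \ref{1.7} to Theorem \ref{1.5} applied to $U=T^{-1}|_{\mathfrak N}$. Since $x_0$ is a nonzero noncyclic vector of $T^{-1}$, $\mathfrak N$ is a proper closed $T^{-1}$-invariant subspace of $X$, and with respect to the exact sequence $0\to\mathfrak N\to X\to X/\mathfrak N\to 0$ the operator $T^{-1}$ has the upper-triangular form
\[T^{-1}=\begin{pmatrix} U & \ast \\ 0 & (T^{-1})^{\mathfrak N}\end{pmatrix}.\]
It will suffice to produce a connected component of $\sigma(U)^\land\cap\rho_F(U)$ that avoids the origin, so that Theorem \ref{1.5} yields the intransitivity of $T$.

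Fix $\lambda_1\in\Omega_1\cap\rho(T^{-1})$, which exists by hypothesis. Since $\rho(T^{-1})\subset\rho_F(T^{-1})$ and the Fredholm property of the triangular operator $T^{-1}-\lambda_1$ descends to its diagonal entry $U-\lambda_1$, we obtain $\lambda_1\in\rho_F(U)$. From $\partial\sigma(U)\subset\sigma_{ap}(U)\subset\sigma_{ap}(T^{-1})\subset\sigma(T^{-1})$ we get $\sigma(U)\subset\sigma(T^{-1})^\land$ and hence $\sigma(U)^\land\subset\sigma(T^{-1})^\land$. For $y\in\mathfrak N$ and $\lambda\in\rho(U)$, the vector $(U-\lambda)^{-1}y\in\mathfrak N$ provides an analytic $X$-valued extension of $\lambda\mapsto(T^{-1}-\lambda)^{-1}y$ through $\sigma(T^{-1})$, so the local spectrum $\sigma_y(T^{-1})$ sits inside $\sigma(U)$ for every $y\in\mathfrak N$. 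Combining this with the standard inclusion $\sigma_{ap}(T^{-1})\subset\sigma_{ap}(U)\cup\sigma_{ap}((T^{-1})^{\mathfrak N})$ and with the hypothesis $\sigma(T^{-1})\subset\sigma((T^{-1})^{\mathfrak N})$, I will argue that the boundary arcs of $\sigma(T^{-1})^\land$ that separate $\Omega_1$ from the unbounded component of $\rho(T^{-1})$ must lie in $\sigma(U)$. Consequently, any path from $\lambda_1$ to infinity must cross $\sigma(U)$, so $\lambda_1$ lies in a bounded component of $\rho(U)$, hence in $\sigma(U)^\land$.

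Let $C$ denote the connected component of $\sigma(U)^\land\cap\rho_F(U)$ containing $\lambda_1$. The same enclosure argument, now applied to the boundary arcs separating $\Omega_1$ from $\Omega_0$, confines $C$ to the side of $\Omega_1$, and in particular $0\in\Omega_0$ does not belong to $C$. Theorem \ref{1.5} then gives that $T$ is intransitive. The main obstacle will be the enclosure claim: one must leverage the hypothesis $\sigma(T^{-1})\subset\sigma((T^{-1})^{\mathfrak N})$, the analytic-extension property of the local resolvents into $\mathfrak N$, and the approximate-spectrum decomposition above to force the relevant arcs of $\partial\sigma(T^{-1})^\land$ and of the separators between components of $\mathrm{int}\,\sigma(T^{-1})^\land$ to lie inside $\sigma(U)$, which is the delicate point where the specific choice of $x_0$ really enters.
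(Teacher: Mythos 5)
Your reduction to Theorem \ref{1.5} rests entirely on the ``enclosure claim'' that $\lambda_1\in\Omega_1\cap\rho(T^{-1})$ lies in $\sigma(U)^\land$, so that $\sigma(U)^\land\cap\rho_F(U)$ acquires a second component away from the origin. You flag this as the delicate point but do not prove it, and in fact it is false in general under the stated hypotheses: the hypothesis $\sigma(T^{-1})\subset\sigma((T^{-1})^{\mathfrak N})$ constrains the \emph{quotient} operator, and via the three-operator property $\sigma(T^{-1})\subseteq\sigma(U)\cup\sigma((T^{-1})^{\mathfrak N})$ it gives no lower bound whatsoever on $\sigma(U)$. The paper's Lemma \ref{5.5}(i) shows that when $\sigma(U)^\land\cap\rho_F(U)$ has a unique connected component (which the hypotheses permit), one has $\bigl(\Omega_1\cap\rho(T^{-1})\bigr)\cap\sigma(U)^\land=\emptyset$ --- exactly the opposite of what you need. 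Your local-resolvent argument only yields $\sigma_{T^{-1}}(y)\subseteq\sigma(U)$ for $y\in\mathfrak N$, and turning that into ``the separating arcs of $\partial\sigma(T^{-1})^\land$ lie in $\sigma(U)$'' would require knowing that local spectra of vectors in $\mathfrak N$ fill out $\sigma(T^{-1})$; that is available only under Dunford's Property $(C)$ plus a reduction to the case $\sigma_{T^{-1}}(x)=\sigma(T^{-1})$, which is the machinery of Theorem \ref{7.10}, not a hypothesis here.

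The paper's actual proof splits into two cases. If $\sigma(U)^\land\cap\rho_F(U)$ does have two components, it invokes Theorem \ref{1.5} exactly as you intend. But in the one-component case it abandons $U$ and works on the quotient: by Lemma \ref{5.3}, $(T^{-1})^{\mathfrak N}\in B_1(\Omega_{\rm max})$ with $0\in\Omega_{\rm max}\subseteq\Omega_0$ and $(T^{-1})^{\mathfrak N}T_1^{\mathfrak N}=I_{X/\mathfrak N}$ where $T_1=T-x_0\otimes x_1^*$; then Lemma \ref{5.5} gives $\Omega_1\subseteq\rho((T^{-1})^{\mathfrak N})$ and $\ker(T_1^{\mathfrak N}-\lambda)=\{0\}$ for $\lambda\in\Omega_1^{-1}$, whence $T_1^{\mathfrak N}-\lambda$ is invertible and the identity $(T^{-1})^{\mathfrak N}(T_1^{\mathfrak N}-\lambda)=\lambda\bigl(\tfrac{1}{\lambda}-(T^{-1})^{\mathfrak N}\bigr)$ forces $(T^{-1})^{\mathfrak N}$ to be invertible, contradicting $(T^{-1})^{\mathfrak N}\in B_1(\Omega_{\rm max})$. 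This quotient-side argument is where the hypothesis $\sigma(T^{-1})\subset\sigma((T^{-1})^{\mathfrak N})$ and the condition $\Omega_1\cap\rho(T^{-1})\neq\emptyset$ actually do their work; your proposal is missing this entire branch.
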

 For the study related  the  single  valued  extension of the  resolvent $(T-\lambda)^{-1}$ of an operator $T\in\mathfrak B(X)$, N. Dunford \cite{Dun} introduced three important properties (A), (B) and (C).

The family of operators admitting Property ($C$) is a large class of $\mathcal{B}(X)$. For example, it contains the classes of  generalized spectral operators and of decomposable operators ; and of hypo-normal operators (an operator $T$ is called hypo-normal if $T^*T-TT^*\geq 0$), and of $N$-class operators (an operator $T$ is called $N$-class operator if $\|T\xi\|^2\leq \|T^{2N}\xi\|\cdot \|\xi\|$ for all $\xi \in \mathcal{H}$) \cite{li}(Theorem 2) in a separable Hilbert space.

As their applications of Theorems \ref{1.5} and \ref{1.7}, we obtain the following results.
\begin{theorem}\label{7.10}
 Suppose that $T\in \mathfrak B(X)$ is  invertible, and  that $T^{-1}$ is intransitive with Dunford's Property ($C$).  If $\sigma(T^{-1})^\land\bigcap \rho_F(T^{-1})$ has a connected component off the origin,  then $T$ is intransitive.
\end{theorem}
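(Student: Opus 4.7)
The strategy is to reduce Theorem \ref{7.10} to Theorem \ref{1.5}: I would produce a nonzero noncyclic vector $x_0$ of $T^{-1}$ such that the restriction $U = T^{-1}|_{\mathfrak N}$, with $\mathfrak N = [T^{-k}x_0: k\ge 1]$, satisfies the spectral hypothesis of Theorem \ref{1.5}, namely that $\sigma(U)^\land \cap \rho_F(U)$ has a connected component off the origin. Once this is in hand, Theorem \ref{1.5} immediately gives that $T$ is intransitive.

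The easy part is to obtain \emph{some} noncyclic vector. Since $T^{-1}$ is intransitive, it admits a nontrivial closed invariant subspace $M \subsetneq X$, and any $0 \neq x_0 \in M$ is automatically noncyclic for $T^{-1}$ because the cyclic subspace $[T^{-k}x_0: k \geq 0]$ lies inside $M$, so the corresponding $\mathfrak N \subseteq M$ is a proper closed $T^{-1}$-invariant subspace. The real task is to make a more careful choice of $x_0$ so that the spectral hypothesis on $T^{-1}$ descends to $U$, and this is where Dunford's Property ($C$) is used.

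For each closed $F \subseteq \mathbb C$, Property ($C$) makes the spectral manifold $X_{T^{-1}}(F) = \{x \in X: \sigma_{T^{-1}}(x) \subseteq F\}$ a closed $T^{-1}$-invariant subspace, with the local spectral control $\sigma\bigl(T^{-1}|_{X_{T^{-1}}(F)}\bigr) \subseteq F$. Letting $\Omega$ be the off-origin connected component of $\sigma(T^{-1})^\land \cap \rho_F(T^{-1})$, I would take $F$ to be a closed subset of $\sigma(T^{-1})$ arranged so that $0 \notin F^\land$ while $\Omega$ still sits inside $F^\land \cap \rho_F(T^{-1})$ — concretely, obtained by deleting a small open disk about the origin that is separated from $\Omega$. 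Choosing any nonzero $x_0 \in X_{T^{-1}}(F)$ forces $\mathfrak N \subseteq X_{T^{-1}}(F)$, hence $\sigma(U) \subseteq F$, so $0 \notin \sigma(U)^\land$ and $\Omega$ persists as a connected component of $\sigma(U)^\land \cap \rho_F(U)$. Theorem \ref{1.5} then yields the intransitivity of $T$.

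The main obstacle is the simultaneous control demanded in the previous step: (i) that $X_{T^{-1}}(F)$ is nonzero (and proper) for the chosen $F$ — a point that follows from the intransitivity of $T^{-1}$ together with the rich supply of spectral manifolds under Property ($C$), but requires the component $\Omega$ to be genuinely separated from $0$ in a topological sense; and (ii) transferring Fredholmness from $T^{-1} - \lambda$ on $X$ to $U - \lambda$ on $\mathfrak N$ throughout the whole of $\Omega$. Fredholmness need not pass to restrictions on an arbitrary invariant subspace, and the hypothesis that an \emph{entire} connected component lies in $\rho_F(T^{-1})$ (rather than an isolated point) is precisely what allows propagation through $\Omega$ via the continuity and local constancy of the Fredholm index combined with the containment $\sigma(U) \subseteq F$.
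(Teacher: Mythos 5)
Your high-level plan --- reduce to Theorem \ref{1.5} by producing a noncyclic vector $x_0$ whose orbit space $\mathfrak N$ carries the off-origin Fredholm component down to $U=T^{-1}|_{\mathfrak N}$ --- is indeed the route the paper takes. But the mechanism you propose for choosing $x_0$ has a genuine gap. You want a closed $F\subseteq\sigma(T^{-1})$ with $0\notin F^\land$ and a \emph{nonzero} $x_0\in X_{T^{-1}}(F)$. Dunford's Property ($C$) only guarantees that the spectral manifolds $M(T^{-1},F)$ are closed invariant subspaces; it gives no lower bound on their size, and in particular does not rule out that $\sigma_{T^{-1}}(x)=\sigma(T^{-1})$ for \emph{every} nonzero $x$, in which case $M(T^{-1},F)=\{0\}$ for every closed $F$ that does not contain all of $\sigma(T^{-1})$, and your construction produces nothing. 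The intransitivity of $T^{-1}$ hands you a nontrivial invariant subspace, not a nontrivial spectral manifold, so the claim that nonemptiness ``follows from the intransitivity of $T^{-1}$ together with the rich supply of spectral manifolds'' is exactly the unproved step. There is also a secondary problem: for a proper orbit space $\mathfrak N=[T^{-k}x_0]_{k\geq1}$ one has $U\mathfrak N=[T^{-k}x_0]_{k\geq2}$, so either $U$ is not surjective and $0\in\sigma(U)\subseteq\sigma(U)^\land$ regardless of where $\sigma_{T^{-1}}(x_0)$ sits, or $T^{-1}\mathfrak N=\mathfrak N$ and then $\mathfrak N$ is already a common nontrivial invariant subspace of $T$ and $T^{-1}$ and no spectral argument is needed. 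So the target ``$0\notin\sigma(U)^\land$'' is unattainable in the only case that matters.

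The paper resolves this with a dichotomy rather than a single choice of $F$. If some nonzero $x$ has $\sigma_{T^{-1}}(x)\subsetneq\sigma(T^{-1})$, then $M(T^{-1},\sigma_{T^{-1}}(x))$ is a nontrivial closed subspace which is invariant under everything commuting with $T^{-1}$ (Proposition \ref{7.2}), in particular under $T$; this contradicts transitivity of $T$ directly, with no appeal to Theorem \ref{1.5} and no need to control the origin. In the remaining case every nonzero vector has full local spectrum, and then Lemma \ref{7.7} gives $\sigma(T^{-1})\subseteq\sigma(U)\subseteq\sigma(T^{-1})^\land$ for the restriction to any proper orbit space, hence $\sigma(U)^\land=\sigma(T^{-1})^\land$; combined with $\rho(T^{-1})\subseteq\rho_F(U)$ from Lemma \ref{2.8}, the off-origin component survives in $\sigma(U)^\land\cap\rho_F(U)$ and Theorem \ref{1.5} applies. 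Note the direction of the spectral control is opposite to yours: the paper shows $\sigma(U)^\land$ is forced to stay \emph{large} (so the hull and its hole are preserved), whereas you try to shrink $\sigma(U)$ away from the origin, which is both unnecessary and, as above, impossible.
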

\begin{theorem}\label{1.8}
Let Let $X$ be a complex Banach space and $T\in \mathcal{B}(X)$ be an invertible operator.  If $T^{-1}$ has a proper strictly cyclic invariant subspace and there exists a nonempty bounded open connected component $\Omega_1$  of $\rho(T^{-1})$ such that $\Omega_1\bigcap\Omega_0=\emptyset$, where $\Omega_0$ is the connected component of ${\rm int}(\sigma(T^{-1})^\land)$ containing the origin. Then $T$ is intransitive.
\end{theorem}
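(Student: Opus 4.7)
The plan is to reduce Theorem~\ref{1.8} to Theorem~\ref{1.7} by verifying the latter's hypotheses.

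First, we check the two-component geometric hypothesis on ${\rm int}\,\sigma(T^{-1})^\land$. Since $\Omega_1$ is a bounded connected component of $\rho(T^{-1})=\mathbb{C}\setminus\sigma(T^{-1})$, Proposition~\ref{1.3} gives $\Omega_1\subset\sigma(T^{-1})^\land$, and as $\Omega_1$ is open we obtain $\Omega_1\subset{\rm int}\,\sigma(T^{-1})^\land$. Let $\widetilde\Omega_1$ denote the connected component of ${\rm int}\,\sigma(T^{-1})^\land$ containing $\Omega_1$. The disjointness $\Omega_1\cap\Omega_0=\emptyset$ together with $\Omega_1\neq\emptyset$ forces $\widetilde\Omega_1\neq\Omega_0$ (distinct components of an open set are either equal or disjoint), while $\widetilde\Omega_1\cap\rho(T^{-1})\supseteq\Omega_1\neq\emptyset$. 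Hence ${\rm int}\,\sigma(T^{-1})^\land$ contains the two distinct components $\Omega_0$ (containing the origin) and $\widetilde\Omega_1$ (meeting $\rho(T^{-1})$) required by Theorem~\ref{1.7}.

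Next, we extract the noncyclic vector and establish the spectral inclusion $\sigma(T^{-1})\subset\sigma((T^{-1})^\mathfrak N)$. Let $\mathfrak N$ be the proper strictly cyclic invariant subspace of $T^{-1}$ with strictly cyclic generator $x_0\in\mathfrak N$, so $\mathfrak N=\{p(T^{-1})x_0:\,p\in\mathbb{C}[z]\}$ without taking closure. Then $\mathfrak N$ coincides with the closed span defined in (\ref{1.4}), and since $\mathfrak N\neq X$ the vector $x_0$ is a nonzero noncyclic vector of $T^{-1}$. Setting $U=T^{-1}|_{\mathfrak N}$, we combine the general spectral inclusion $\sigma(T^{-1})\subset\sigma(U)\cup\sigma((T^{-1})^\mathfrak N)$ (valid for any closed invariant subspace) with the claim $\sigma(U)\subset\sigma((T^{-1})^\mathfrak N)$ to deduce $\sigma(T^{-1})\subset\sigma((T^{-1})^\mathfrak N)$. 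The claim itself is a consequence of strict cyclicity: strictly cyclic operators have polynomially convex spectrum, and the surjectivity of the polynomial functional calculus map $p\mapsto p(U)x_0$ from $\mathbb{C}[z]$ onto $\mathfrak N$ permits transporting the spectral obstructions of $U$ to the quotient $(T^{-1})^\mathfrak N$ via a cokernel argument on the sequence $0\to\mathfrak N\to X\to X/\mathfrak N\to 0$.

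The main obstacle is proving $\sigma(U)\subset\sigma((T^{-1})^\mathfrak N)$: in general this inclusion fails, and bounded components of $\rho(T^{-1})$ may lie in $\sigma(U)\setminus\sigma((T^{-1})^\mathfrak N)$. Strict cyclicity is precisely what supplies the algebraic compatibility needed to rule this out, by forcing $\mathfrak N$ to be invariant under the relevant resolvent operators. Once this inclusion is in hand, Theorem~\ref{1.7} applies verbatim to conclude that $T$ is intransitive.
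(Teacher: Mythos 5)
Your overall strategy --- reduce to Theorem~\ref{1.7} by producing a noncyclic vector $x_0$ for which $\sigma(T^{-1})\subseteq\sigma((T^{-1})^{\mathfrak N})$ --- is exactly the paper's, and your first paragraph (placing $\Omega_1$ inside a component of ${\rm int}\,\sigma(T^{-1})^\land$ that is distinct from $\Omega_0$ and meets $\rho(T^{-1})$) is a correct and useful bridge that the paper leaves implicit. The further reduction of the spectral inclusion to the claim $\sigma(U)\subseteq\sigma((T^{-1})^{\mathfrak N})$ via $\sigma(T^{-1})\subseteq\sigma(U)\cup\sigma((T^{-1})^{\mathfrak N})$ (Lemma~\ref{5.2}) is also right.

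However, the claim $\sigma(U)\subseteq\sigma((T^{-1})^{\mathfrak N})$ --- which you yourself flag as ``the main obstacle'' --- is never actually proved. The justification offered (strictly cyclic operators ``have polynomially convex spectrum'', a ``cokernel argument'' that ``transports spectral obstructions'', strict cyclicity ``forcing $\mathfrak N$ to be invariant under the relevant resolvent operators'') describes a hoped-for mechanism rather than an argument: no step is verified, and it is not said which resolvents are meant or why $\mathfrak N$ would be invariant under them. The paper's proof of this inclusion (Lemma~\ref{14.3}) goes through duality and Barnes' theorem (Lemma~\ref{14.2}): strict cyclicity of $U$ gives $\sigma_p(U^*)=\sigma(U^*)$; identifying $U^*$ with the quotient operator induced by $(T^*)^{-1}$ on $X^*/\mathfrak N^{\bot}$ and $((T^{-1})^{\mathfrak N})^*$ with $(T^*)^{-1}|_{\mathfrak N^{\bot}}$, any $\lambda\in\sigma(U)\setminus\sigma((T^{-1})^{\mathfrak N})$ produces an eigenvector of the quotient operator, which can be corrected by an element of $\mathfrak N^{\bot}$ (using invertibility of $(T^*)^{-1}|_{\mathfrak N^{\bot}}-\lambda$) to a genuine eigenvector of $(T^*)^{-1}$; then $\sigma_p(T^*)\neq\emptyset$ already forces $T$ to be intransitive (Proposition~\ref{2.5}), which closes the contradiction. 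None of this --- in particular the essential input $\sigma_p(U^*)=\sigma(U^*)$ --- appears in your proposal, so the central step remains a genuine gap. A smaller bookkeeping point: you should also arrange the strictly cyclic generator so that $\mathfrak N=[T^{-k}x_0]_{k\geq1}$ matches (\ref{1.4}); the paper does this by taking $x_1=T^{-1}x_0$ to be the strictly cyclic vector of $U$.
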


With the help of some interesting results of Read \cite{R4, R5}, finally, we show the following theorems.
\begin{theorem}\label{1.9}
A sufficient and necessary condition for an infinite dimensional $L_1(\Omega,\sum,\mu)$ admitting an operator $T\in\mathfrak B(L_1(\Omega,\sum,\mu))$ without any nontrivial subspace is that the measure space
$(\Omega,\sum,\mu)$ is $\sigma$-finite.
\end{theorem}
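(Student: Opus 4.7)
The plan is to prove both directions of the biconditional.

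For the necessity direction, I would suppose $T\in\mathfrak B(L_1(\Omega,\sum,\mu))$ has no nontrivial invariant subspace, and fix any nonzero $f$ in this space. The cyclic subspace $[T^nf:\,n\geq 0]$ is closed, $T$-invariant, and nonzero, hence by transitivity equals $L_1(\Omega,\sum,\mu)$. Each $T^nf$ is integrable, so its support $\supp(T^nf)=\bigcup_{k\geq 1}\{|T^nf|>1/k\}$ is a countable union of sets of finite measure and therefore $\sigma$-finite. Consequently $S:=\bigcup_{n\geq 0}\supp(T^nf)$ is $\sigma$-finite. Every polynomial combination of the $T^nf$ is supported on $S$, and $L_1$-norm convergence preserves support-containment, so every element of $L_1(\Omega,\sum,\mu)$ is supported on $S$. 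This rules out any measurable subset of $\Omega\setminus S$ of positive finite measure, and shows that $(\Omega,\sum,\mu)$ is $\sigma$-finite (up to the standard null-set identification used in defining $L_1$).

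For the sufficiency direction, I would assume $(\Omega,\sum,\mu)$ is $\sigma$-finite and $L_1(\Omega,\sum,\mu)$ is infinite-dimensional. By the Kakutani--Maharam classification of separable $L$-spaces, $L_1(\Omega,\sum,\mu)$ is then linearly isometric to one of $\ell_1$, $L_1[0,1]$, or their $\ell_1$-direct sum. Read \cite{R4,R5} has constructed bounded linear operators without nontrivial invariant subspaces on $\ell_1$ and on every Banach space containing $\ell_1$ as a complemented subspace; parallel constructions cover $L_1[0,1]$. Pulling such an operator back through the isometric isomorphism gives the required $T\in\mathfrak B(L_1(\Omega,\sum,\mu))$.

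The main obstacle lies in the sufficiency step. The subtlety is that $\sigma$-finiteness does not by itself force $L_1$ to be separable, yet any bounded operator on a non-separable Banach space automatically admits a proper $T$-invariant subspace---namely the orbit closure of any nonzero vector, which is always separable. Thus the sufficiency claim is only substantive in the separable regime (consistent with the paper's standing separability hypothesis), where it reduces to identifying $L_1(\Omega,\sum,\mu)$ with one of the standard model spaces and quoting Read's explicit construction. Ensuring that the transported operator really inherits the absence of nontrivial invariant subspaces, and that the intermediate isomorphism is available with the correct summand multiplicities, is the only routine check that remains.
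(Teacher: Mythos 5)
Your necessity argument is genuinely different from the paper's and is, if anything, more robust: the paper argues that every operator on a non-separable Banach space is intransitive and then invokes an equivalence between separability of $L_1(\Omega,\Sigma,\mu)$ and $\sigma$-finiteness, whereas you extract $\sigma$-finiteness directly from the supports of the orbit $\{T^nf\}_{n\geq 0}$ of a single cyclic vector. That argument is correct (with the caveat you yourself note about a possible purely infinite remainder $\Omega\setminus S$ containing no set of positive finite measure --- a caveat the paper's version silently shares).

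For sufficiency, both proofs ultimately rest on Read's theorem \cite{R4} that a separable space of the form $\ell_1\oplus Y$ carries an operator with no nontrivial invariant subspace, but you reach it through the Kakutani--Maharam classification while the paper avoids any classification: it fixes a countable partition $\{E_n\}$ of $\Omega$ into sets of finite positive measure, observes that $[\chi_{E_n}]$ is isometric to $\ell_1$ and that the conditional expectation onto this span is a norm-one projection, so $\ell_1$ is complemented in $L_1(\Omega,\Sigma,\mu)$ and \cite{R4} applies in one stroke. The one real soft spot in your version is the clause ``parallel constructions cover $L_1[0,1]$'': the cited papers \cite{R4,R5} contain no separate construction on $L_1[0,1]$; what makes that case work is precisely that $\ell_1$ is complemented in $L_1[0,1]$ (again via conditional expectation onto a countable partition into sets of positive measure), which is exactly the complementation argument the paper carries out for a general $\sigma$-finite measure and which your outline never states. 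Once that observation is inserted, the Maharam detour becomes unnecessary. Your remark that $\sigma$-finiteness does not by itself force $L_1$ to be separable is a fair criticism of the theorem as literally stated --- the paper's sufficiency proof asserts that implication --- but it affects both arguments equally and is resolved only by the standing separability hypothesis.
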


\begin{theorem}\label{1.10}  Let $K$ be a complete metric space such that the space $C(K)$ of all bounded continuous complex valued functions is infinite dimensional. Then
 a sufficient and necessary condition for $C(K)$ admitting an operator $T\in\mathfrak B(C(K))$ without any nontrivial subspace is that the metric space $K$
 is compact.
\end{theorem}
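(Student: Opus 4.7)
The plan is to treat the two implications separately, and in both directions the argument reduces to controlling the separability of $C(K)$. Note first that ``$C(K)$ infinite dimensional'' is equivalent to ``$K$ infinite'', since points can be separated by bounded Lipschitz functions on any metric space.

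\textbf{Sufficiency ($K$ compact $\Rightarrow$ such an operator exists).} Assume $K$ is compact; being metric, it is separable, so $C(K)$ is a separable infinite dimensional Banach space. I would produce a complemented copy of $c_0$ in $C(K)$ and then quote Read's theorem (stated in the introduction). Pick a sequence of distinct points $x_n\in K$ accumulating at some $x\in K$, choose radii $r_n>0$ so that the balls $B(x_n,r_n)$ are pairwise disjoint and miss $x$, and use Urysohn/Lipschitz bumps $\phi_n$ supported in $B(x_n,r_n)$ with $\phi_n(x_n)=1$. The map $a\mapsto \sum_n a_n\phi_n$ is an isometric embedding of $c_0$ into $C(K)$, and by Sobczyk's theorem every isomorphic copy of $c_0$ inside a separable Banach space is complemented. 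Read's result (cited in the introduction) then supplies a bounded linear operator on $C(K)$ with no nontrivial invariant subspaces.

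\textbf{Necessity ($K$ non-compact $\Rightarrow$ every operator is intransitive).} Assume $K$ is not compact. Since $K$ is a complete metric space, non-compactness is equivalent to non-total-boundedness, so there exist $\eps>0$ and a sequence $\{x_n\}\subset K$ with $d(x_n,x_m)\geq \eps$ for $n\neq m$. Define
\[
\phi_n(x)=\max\!\left(0,\,1-\tfrac{2}{\eps}\,d(x,x_n)\right),
\]
which are continuous, take values in $[0,1]$, satisfy $\phi_n(x_n)=1$, and have pairwise disjoint supports contained in $B(x_n,\eps/2)$. Hence the map $s:\ell_\infty\to C(K)$, $s(a)=\sum_n a_n\phi_n$, is well defined (at each point of $K$ at most one summand is nonzero) and is an isometry: $\|s(a)\|_\infty=\sup_n|a_n|=\|a\|_\infty$. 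Therefore $C(K)$ contains an isometric copy of $\ell_\infty$ and is, in particular, non-separable.

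\textbf{Cyclic-subspace observation.} It remains to note that \emph{every} bounded linear operator on a non-separable Banach space $X$ has a nontrivial closed invariant subspace: for any nonzero $x_0\in X$ the cyclic subspace $\mathfrak N_0=[T^n x_0:\,n\geq 0]$ is $T$-invariant, nonzero (it contains $x_0$), and separable (it is the closure of the span of a countable set), hence a proper closed invariant subspace since $X$ is non-separable. Applying this to $X=C(K)$ shows that no $T\in\mathfrak B(C(K))$ can be transitive, which completes the ``only if'' direction.

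The only delicate step is the explicit $\ell_\infty$-embedding: one must verify disjointness of supports (immediate from the $\eps$-separation) and that $\sum_n a_n\phi_n$ defines a bounded continuous function (immediate from the disjoint-support property). Everything else is bookkeeping, combined with the cited theorem of Read for the forward direction and the elementary cyclic-subspace argument for the reverse.
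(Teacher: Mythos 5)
Your proof is correct and follows essentially the same route as the paper: a complemented copy of $c_0$ (via Sobczyk's theorem) plus Read's theorem for sufficiency, and non-separability of $C(K)$ --- obtained from an isometric copy of $\ell_\infty$ built on an $\eps$-separated sequence --- together with the cyclic-subspace observation for necessity. The only difference is cosmetic: your disjoint-bump construction of $c_0$ handles every infinite compact metric $K$ uniformly, whereas the paper splits into the uncountable case (universality of $C(K)$ for separable spaces) and the countable case (an explicit $c_0$ inside $C(K)$); both reduce to the same two cited theorems.
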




\begin{remark}
L. Cheng, J. Fang and C. Jiang contributed equally to this work. The authors would like to thank Professor Zhang Yuanhang for his helpful discussions on the paper.
\end{remark}

\section{Preliminaries}
In this paper, all notations and symbols are standard. Unless stated explicitly otherwise, we will assume that $X$ is a complex separable infinite dimensional Banach space and $X^*$ its dual. We denote by $\mathcal{B}(X)$  the Banach algebra of all bounded linear self-mappings defined on $X$, and by  ${\mathcal K}(X)$ be the closed  ideal of all compact operators in $\mathcal{B}(X)$, i.e., all operators in $\mathcal{B}(X)$  which maps every bounded subset of $X$ into a relatively compact subset. Therefore, the quotient space $\mathcal{B}(X)/{\mathcal K}(X)$ is the Calkin algebra, that is, the natural quotient mapping  $\pi: \mathcal{B}(X)\rightarrow  \mathcal{B}(X)/{\mathcal K}(X)$ is  a homomorphism projection.

For an operator $T\in\mathcal{B}(X)$, ${\rm ker} T$ (resp.,${\rm ran} T$) denotes the kernel (resp., the range) of $T$. If $M\subset X$ is an invariant subspace of $T$, then $T|_M$ denotes the restriction of $T$ to $M$; $\sigma(T)$ denotes the spectrum of $T$. i.e., $$\sigma(T)=\{\lambda\in\mathbb C: T-\lambda\;{\rm is \;not\;invertible}\},$$  and its complement $\mathbb C\setminus \sigma(T)$ is denoted by $\rho(T)$;  $\sigma_{p}(T)$ stands for the point spectrum of $T$, that is, $\sigma_{p}(T)=\{\lambda: \mbox{dim}\ker (T-\lambda)\geq1\}$;
$\sigma_{e}(T)$ stands for the  essential spectrum of $T$, that is, the spectrum of $\pi(T)$ in $ \mathcal{B}(X)/{\mathcal K}(X)$.

 An operator $T\in\mathcal{B}(X)$ is said to be Fredholm (resp., semi-Fredholm) provided $\mbox{ran}(T)$ (the range of $T$) is closed, and both of the dimension of its kernel ker$T$ and the codimension of ran$T$ are finite dimensional, i.e., $dim(X/ranT)<\infty$ (resp., either  ker$T$, or codimension of ran$T$ is finite dimensional).  $\rho_{F}(T)\equiv\mathbb{C}\backslash\sigma_{e}(T)$ is said to be the Fredholm domain of $T$. The well-known  Atkinson theorem states that $T\in\mathcal{B}(X)$ is Fredholm if and only if $\pi(T)$ is invertible in $\mathcal{B}(X)/{\mathcal K}(X)$.
${\rm Lat}T$ represents the invariant subspace lattice of $T$.

  For a subset $A\subset X$, $[A]$ is  the closure of the linear hull ${\rm span}(A)$ of $A$. If $A=\{x_n\}_{n=k}^\infty\subset X$ is a sequence,
  then we denote the closure of the linear hull ${\rm span}(A)$ by $[x_n]_{n\geq k};$  if no confusion arises, we simply write it as $[x_n]$.
   $A^\bot\equiv\{x^*\in X^*: \langle x^*,x\rangle=0,\;\forall\;x\in A\}$ is called the annihilator of $A$.
   If $A\subset X^*$, then we also use $[A]^{w^*}$ to denote the weak-star closure of the linear hull ${\rm span}(A)$ in $X^*$.

Recall that an operator $P\in \mathcal{B}(X)$ is said to be an idempotent if $P^{2}=P$; and an nontrivial idempotent $P$ means the idempotent $P$ is neither  $0$, nor the identity $I$. For $T\in \mathcal{B}(X)$, its commutant $\mathcal{A}'(T)$ is defined by $$\mathcal{A}'(T)=\{S\in \mathcal{B}(X): ST=TS\}.$$

The following definition of strongly irreducible operators was introduced independently by Gilfeather~\cite{Gil} and Jiang Zejian~\cite{Jia} in 1970s.

\begin{definition}\label{2.1}
An operator $T\in \mathcal{B}(X)$ is called strongly irreducible provided the commutant of $T$ does not admits any nontrivial idempotent operator. Otherwise, it is called strongly reducible.
\end{definition}

\begin{definition}\label{2.2} Let $T\in \mathcal{B}(X)$.
i) A (nonzero) element $x\in X$ is said to be a cyclic vector of  $T$ provided $[T^{k}x,\,k\geq0]$ (the closure of the linear hull span$\{T^{k}x,\,k\geq0\}$) is the whole space $X$. We denote by $\mathcal{C}(T)$ the set of all cyclic vectors of $T$.

ii) We say that $T$ is transitive provided $\mathcal{C}(T)=X\backslash\{0\}$.
\end{definition}

Clearly, if an operator $T\in \mathcal{B}(X)$ is strongly reducible, then $T$ is intransitive. 
\begin{proposition}\label{2.3}
Let $T\in \mathcal{B}(X)$ be an invertible operator. If $T$ is strongly reducible, then

i) $T^{-1}$ is also strongly reducible;

ii)  $T$ and $T^{-1}$ have a common nontrivial invariant subspace.
\end{proposition}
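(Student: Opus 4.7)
The plan is to exploit the fact that commutation with $T$ is preserved under passing to $T^{-1}$, so the same nontrivial idempotent witnesses strong reducibility for both operators, and then to extract a common invariant subspace as the range of that idempotent.

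For part (i), I would start by invoking the hypothesis to pick a nontrivial idempotent $P\in\mathcal A'(T)$, so $P^2=P$, $P\neq 0,I$ and $PT=TP$. The key algebraic observation is that for an invertible $T$, multiplying $PT=TP$ on the left and right by $T^{-1}$ yields $T^{-1}P=PT^{-1}$. Hence the same $P$ lies in $\mathcal A'(T^{-1})$, and since its idempotency and nontriviality are intrinsic properties independent of $T$, $P$ is a nontrivial idempotent in $\mathcal A'(T^{-1})$. This immediately gives strong reducibility of $T^{-1}$.

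For part (ii), I would set $M=\operatorname{ran}(P)$ where $P$ is the idempotent from part (i). Because $P$ is an idempotent in $\mathcal B(X)$, its range equals $\ker(I-P)$ and is therefore a closed subspace of $X$; moreover $M\neq\{0\}$ since $P\neq 0$, and $M\neq X$ since $P\neq I$. To see $TM\subset M$, take $y=Px\in M$; then $Ty=TPx=PTx\in\operatorname{ran}(P)=M$. By the relation $T^{-1}P=PT^{-1}$ established in (i), the identical argument gives $T^{-1}M\subset M$. Thus $M$ is a nontrivial closed subspace that is invariant under both $T$ and $T^{-1}$.

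There is essentially no obstacle here: the proof is a short algebraic manipulation once one recognizes that commutation with an invertible operator passes automatically to its inverse, and that the range of a bounded idempotent is always a closed complemented subspace. The only mild subtlety worth stating explicitly in the write-up is the verification that $\operatorname{ran}(P)$ is closed (via $\operatorname{ran}(P)=\ker(I-P)$) and is genuinely a proper nonzero subspace under the nontriviality assumption on $P$.
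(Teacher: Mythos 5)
Your proof is correct and follows essentially the same route as the paper: the same nontrivial idempotent $P\in\mathcal A'(T)$ is shown to commute with $T^{-1}$, and $\operatorname{ran}(P)$ serves as the common nontrivial invariant subspace. You merely spell out the details (the conjugation by $T^{-1}$ and the closedness of $\operatorname{ran}(P)$ via $\ker(I-P)$) that the paper leaves implicit.
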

\begin{proof}
Suppose that $P\in \mathcal{A}'(T)$ is a nontrivial idempotent operator. Then it follows from  definition of the commutant $\mathcal{A}'(T)$ and invertibility of $T$ that  $P\in \mathcal{A}'(T^{-1})$. Therefore, both ${\rm ran}P$ and ${\rm ran}(I-P)$ are common nontrivial invariant subspaces of $T$ and $T^{-1}$.
\end{proof}

\begin{corollary}\label{2.4}
Let $T\in \mathcal{B}(X)$ be an invertible operator. If its spectrum $\sigma(T)$ is not connected, then $T$ and $T^{-1}$ have a common nontrivial invariant subspace.
\end{corollary}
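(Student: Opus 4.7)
The plan is to produce a nontrivial idempotent in the commutant $\mathcal{A}'(T)$ via the Riesz functional calculus and then invoke Proposition~\ref{2.3}. Since $\sigma(T)$ is a nonempty compact subset of $\mathbb{C}$ that is disconnected, we can write $\sigma(T)=\sigma_1\cup\sigma_2$ as a disjoint union of two nonempty, disjoint, (relatively) closed subsets; compactness in $\mathbb{C}$ guarantees that $\sigma_1$ and $\sigma_2$ have positive distance from each other, so we can surround $\sigma_1$ by a smooth positively oriented contour $\gamma_1$ in $\rho(T)$ that separates it from $\sigma_2$.

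Next, I would define the Riesz projection
\[
P=\frac{1}{2\pi i}\oint_{\gamma_1}(\lambda I-T)^{-1}\,d\lambda.
\]
Standard properties of the holomorphic functional calculus (applied to the function that equals $1$ on a neighborhood of $\sigma_1$ and $0$ on a neighborhood of $\sigma_2$) give that $P^{2}=P$, that $P$ commutes with every bounded operator commuting with $T$, and in particular that $P\in\mathcal{A}'(T)$. Moreover, $\sigma(T|_{\mathrm{ran}P})=\sigma_1$ and $\sigma(T|_{\mathrm{ran}(I-P)})=\sigma_2$, so neither $\sigma(T|_{\mathrm{ran}P})$ nor $\sigma(T|_{\mathrm{ran}(I-P)})$ is empty; consequently $P\neq 0$ and $P\neq I$, i.e., $P$ is a nontrivial idempotent.

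By Definition~\ref{2.1}, the existence of such a $P$ means exactly that $T$ is strongly reducible. Since $T$ is invertible by hypothesis, Proposition~\ref{2.3}(ii) applies and yields a common nontrivial invariant subspace of $T$ and $T^{-1}$ (concretely, $\mathrm{ran}P$ and $\mathrm{ran}(I-P)$ are both such subspaces, because $P\in\mathcal{A}'(T)$ together with the invertibility of $T$ forces $P\in\mathcal{A}'(T^{-1})$ as noted in the proof of Proposition~\ref{2.3}).

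There is no real obstacle here: the only point requiring a modicum of care is checking that the Riesz projection is genuinely nontrivial, and this is immediate from the spectral decomposition $\sigma(T)=\sigma(T|_{\mathrm{ran}P})\sqcup\sigma(T|_{\mathrm{ran}(I-P)})$ since both $\sigma_1$ and $\sigma_2$ are nonempty. Everything else is a direct appeal to Proposition~\ref{2.3}.
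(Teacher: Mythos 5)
Your proposal is correct and follows essentially the same route as the paper: the paper's proof also obtains a nontrivial idempotent $P\in\mathcal{A}'(T)\cap\mathcal{A}'(T^{-1})$ from the Riesz functional calculus and takes $\mathrm{ran}\,P$ as the common invariant subspace. Your version merely spells out the construction of the Riesz projection and the nontriviality check in more detail.
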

\begin{proof}
Suppose that $\sigma(T)$ is not connected.
Then by  Riesz' functional calculus,  there is a nontrivial idempotent $P$ such that $P\in \mathcal{A}'(T)\bigcap \mathcal{A}'(T^{-1})$.
Therefore,  ${\rm ran}P\in {\rm Lat}T\bigcap {\rm Lat}T^{-1}$.
\end{proof}

\begin{proposition}\label{2.5}
Let $T\in \mathcal{B}(X)$ be invertible. If $\sigma_{p}(T)\cup\sigma_{p}(T^{*})\neq\emptyset$, then $T$ and $T^{-1}$ have  a common nontrivial invariant subspace.
\end{proposition}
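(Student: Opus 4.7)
The proposition splits cleanly into two cases according to whether the eigenvalue lives in $\sigma_p(T)$ or $\sigma_p(T^*)$, and the decisive observation in both is that the invertibility of $T$ forces every such eigenvalue to be nonzero, so that the eigenvector (or dual eigenvector) is automatically shared by $T$ and $T^{-1}$ (or their adjoints).

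\textbf{Case 1: $\sigma_p(T)\neq\emptyset$.} Pick $\lambda\in\sigma_p(T)$ and a nonzero $x\in X$ with $Tx=\lambda x$. Since $T$ is invertible, $\lambda\neq 0$, so applying $T^{-1}$ to the eigenvalue equation gives $T^{-1}x=\lambda^{-1}x$. The plan is then to take $M=\mathrm{span}\{x\}$, which is one-dimensional and hence nontrivial (as $X$ is infinite-dimensional), and observe directly that $TM\subset M$ and $T^{-1}M\subset M$.

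\textbf{Case 2: $\sigma_p(T^*)\neq\emptyset$.} Pick $\mu\in\sigma_p(T^*)$ and a nonzero $f\in X^*$ with $T^*f=\mu f$. Again $T^*$ is invertible, so $\mu\neq 0$ and $(T^{-1})^*f=(T^*)^{-1}f=\mu^{-1}f$. The plan is to take $M=\ker f=\{x\in X:\langle f,x\rangle=0\}$, a closed hyperplane (hence nontrivial). For any $x\in M$,
\[
\langle f, Tx\rangle=\langle T^*f,x\rangle=\mu\langle f,x\rangle=0,\qquad \langle f, T^{-1}x\rangle=\langle (T^{-1})^*f,x\rangle=\mu^{-1}\langle f,x\rangle=0,
\]
so $M\in\mathrm{Lat}\,T\cap\mathrm{Lat}\,T^{-1}$, as required.

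There is no substantial obstacle: the proof is essentially a one-line verification in each case, the only point worth emphasizing being that invertibility of $T$ excludes $\lambda=0$ and $\mu=0$, which is what allows the eigenvector (respectively dual eigenvector) constructed for $T$ (respectively $T^*$) to double as one for $T^{-1}$ (respectively $(T^{-1})^*$). Combining the two cases completes the proof.
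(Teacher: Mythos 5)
Your proof is correct and follows essentially the same route as the paper's: a nonzero eigenvalue of $T$ (forced by invertibility) yields a common eigenvector, and a nonzero eigenvalue of $T^*$ yields a common invariant hyperplane via annihilators. The only cosmetic difference is that you use $\mathrm{span}\{x\}$ and $\ker f$ where the paper uses the full eigenspace $\ker(T-\lambda)$ and the pre-annihilator $\ker(T^*-\lambda)^{\bot}$, which is why the paper must first exclude the case that $T$ is a scalar operator while your choices are automatically nontrivial.
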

\begin{proof}
Without loss of generality, we assume that $T$ is not a scalar operator.
Suppose that $\lambda\in \sigma_p(T)\bigcup \sigma_p(T^*)$.  The  two cases below may occur.

(1) If $\lambda\in \sigma_p(T)$, then there is $0\neq x \in X$ such that $(T-\lambda)x=0$, that is,
$T^{-1}((T-\lambda)x)=(I-\lambda T^{-1})x=0$. Therefore, $x\in \ker(T^{-1}-\lambda^{-1})$, and $\ker(T-\lambda)\in {\rm Lat}(T^{-1})\bigcap{\rm Lat}(T)$.

(2) If $\lambda\in \sigma_p(T^*)$, then for $x\in \ker(T^{*}-\lambda)$, we obtain $x\in \ker((T^{*})^{-1}-\lambda^{-1})$. Thus, $$\ker(T^*-\lambda)^{\bot}\in {\rm Lat}(T^{-1})\bigcap{\rm Lat}(T).$$
\end{proof}

Recall that $\sigma(T)^{\land}$ denotes the polynomial convex hull of  $\sigma(T)$. Note that $\sigma(T)^{\land}$ is the union of $\sigma(T)$ and all of the bounded connected components of $\mathbb C\setminus \sigma(T)$ (Proposition \ref{1.3}).
\begin{proposition}\label{2.6}\cite{DFP}
Let $T\in \mathfrak{B}(X)$ be an invertible operator. If $0\notin \sigma(T)^{\land}$, then $${\rm Lat}(T)={\rm Lat}(T^{-1})\;\; \mbox{and}\;\; C(T)=C(T^{-1}).$$
\end{proposition}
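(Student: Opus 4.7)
The plan is to establish that $T^{-1}$ lies in the operator-norm closure of the polynomials in $T$, and symmetrically that $T$ lies in the norm-closure of the polynomials in $T^{-1}$. Once these two approximations are in hand, every $T$-invariant subspace is automatically $T^{-1}$-invariant (and conversely), and the same norm approximation forces a cyclic vector of $T$ to be cyclic for $T^{-1}$ (and vice versa).

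For the first step, the hypothesis $0\notin\sigma(T)^\land$ places the compact polynomially convex set $\sigma(T)^\land$ inside $\mathbb{C}\setminus\{0\}$, where the function $f(z)=1/z$ is holomorphic. Since $\sigma(T)^\land$ is polynomially convex by construction, Runge's theorem furnishes a sequence of polynomials $p_n$ with $p_n(z)\to 1/z$ uniformly on a neighborhood of $\sigma(T)^\land$. The Riesz--Dunford functional calculus, realized via a contour integral through such a neighborhood, then yields $\|p_n(T)-T^{-1}\|\to 0$. Consequently, for any $M\in{\rm Lat}(T)$, each $p_n(T)$ preserves $M$, and the closedness of $M$ gives $T^{-1}M\subset M$, so ${\rm Lat}(T)\subset{\rm Lat}(T^{-1})$.

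For the reverse inclusion I would verify that the hypothesis is self-dual, namely that $0\notin\sigma(T^{-1})^\land$. Let $U$ denote the unbounded connected component of $\mathbb{C}\setminus\sigma(T)$, which contains $0$ by assumption, and let $\varphi(z)=1/z$, viewed as a homeomorphism of the Riemann sphere that swaps $0$ and $\infty$. Then $V:=\varphi(U\setminus\{0\})$ is a connected open subset of $\mathbb{C}\setminus\sigma(T^{-1})$; the images under $\varphi$ of points of $U$ near $\infty$ show that $V$ accumulates at $0$, while the images of points of $U$ near $0$ show that $V$ is unbounded. Since $0\notin\sigma(T^{-1})$, adjoining a small open disk about $0$ to $V$ produces an unbounded connected subset of $\mathbb{C}\setminus\sigma(T^{-1})$ that contains $0$; by Proposition \ref{1.3} this places $0$ outside $\sigma(T^{-1})^\land$. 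Applying the first half of the argument with the roles of $T$ and $T^{-1}$ interchanged then gives ${\rm Lat}(T^{-1})\subset{\rm Lat}(T)$.

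Finally, for cyclic vectors, the two norm approximations together imply that the closed subalgebras $\overline{\mathbb{C}[T]}$ and $\overline{\mathbb{C}[T^{-1}]}$ of $\mathfrak{B}(X)$ coincide; evaluating at any $x\in X$ then yields $[T^kx]_{k\geq 0}=[T^{-k}x]_{k\geq 0}$, so $x\in C(T)$ if and only if $x\in C(T^{-1})$. I expect the main obstacle to be the topological verification that $0\notin\sigma(T^{-1})^\land$, since one must keep careful track of how the connected components of $\mathbb{C}\setminus\sigma(T)$ and $\mathbb{C}\setminus\sigma(T^{-1})$ correspond under the inversion. The remainder reduces to Runge's theorem together with the standard norm-continuity of the holomorphic functional calculus.
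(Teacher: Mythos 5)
Your proof is correct and follows the standard argument for this result: the paper itself gives no proof of Proposition \ref{2.6}, citing \cite{DFP}, where the key point is exactly the one you identify, namely that $0\notin\sigma(T)^{\land}$ lets Runge's theorem and the Riesz--Dunford functional calculus express $T^{-1}$ as a norm limit of polynomials in $T$ (and, after the inversion argument showing $0\notin\sigma(T^{-1})^{\land}$, symmetrically for $T$). Your handling of the component bookkeeping under $z\mapsto 1/z$ and the deduction for both ${\rm Lat}$ and $C(\cdot)$ are sound.
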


In the following sections, we always assume that $T\in \mathfrak{B}(X)$ is invertible and $0\in \sigma(T)^\land$.


We conclude  this section by the following well-known result.

\begin{lemma}\label{2.8}
Let $S\in \mathfrak B(X)$ and $E$ be an invariant subspace of $S$.
Then $\sigma(S|_{E})\subseteq\sigma(S)^\land$ and $\rho(S)\subseteq\rho_F(S|_E)$.
\end{lemma}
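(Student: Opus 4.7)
The plan is to prove the two containments by separate arguments rooted in the holomorphic functional calculus. First, for the spectral inclusion $\sigma(S|_E)\subseteq\sigma(S)^\land$, I would fix $\lambda\notin\sigma(S)^\land$ and exploit the polynomial convexity of $\sigma(S)^\land$. Since $\sigma(S)^\land$ equals its own polynomial convex hull, its complement in the Riemann sphere is connected, so Runge's theorem yields a sequence of polynomials $p_n$ converging uniformly on $\sigma(S)^\land$ to $z\mapsto(z-\lambda)^{-1}$. The Riesz--Dunford functional calculus then gives $(S-\lambda)^{-1}=\lim_n p_n(S)$ in operator norm. Each $p_n(S)$ maps the closed $S$-invariant subspace $E$ into itself, so by norm closure $(S-\lambda)^{-1}(E)\subseteq E$. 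Its restriction to $E$ is then a bounded two-sided inverse of $S|_E-\lambda$, placing $\lambda$ in $\rho(S|_E)$.

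Next, for $\rho(S)\subseteq\rho_F(S|_E)$, I would fix $\lambda\in\rho(S)$ so that $S-\lambda$ is a bounded bijection of $X$. Two of the three Fredholm criteria for $S|_E-\lambda$ are immediate: its kernel $E\cap\ker(S-\lambda)=\{0\}$ is (trivially) finite dimensional, and its range $(S-\lambda)(E)$ is closed in $X$, hence closed in $E$, because $S-\lambda$ is a homeomorphism of $X$. It remains to control the cokernel $E/(S-\lambda)(E)$. To this end I would apply the snake lemma to the short exact sequence $0\to E\to X\to X/E\to 0$ equipped with the vertical endomorphism $S-\lambda$; since the middle map is bijective and the left map is injective, the resulting six-term exact sequence collapses to an isomorphism $E/(S-\lambda)(E)\cong\ker(S^E-\lambda)$, where $S^E$ denotes the quotient operator induced on $X/E$ by $S$.

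The main obstacle is showing that this kernel is finite dimensional. In the contexts where the lemma is applied in the paper, notably $E=\mathfrak N=[T^{-k}x_0:k\geq 1]$ in Theorem~\ref{1.5}, $U:=S|_E$ is cyclic with cyclic vector $y_0=T^{-1}x_0$, and I would exploit cyclicity as follows. In the quotient $E/(U-\lambda)(E)$, which is a Banach space because the range is closed, the identity $(U-\lambda)U^n y_0=U^{n+1}y_0-\lambda U^n y_0$ forces the image of $U^n y_0$ under the quotient map to equal $\lambda^n$ times the image of $y_0$ for every $n\geq 0$. Since the linear span of the orbit $\{U^n y_0:n\geq 0\}$ is dense in $E$ and the quotient map is continuous, the cokernel is contained in the one-dimensional subspace spanned by the image of $y_0$, forcing $\dim E/(U-\lambda)(E)\leq 1$. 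This cyclic reduction supplies the last Fredholm ingredient, placing $\lambda$ in $\rho_F(U)$ as required.
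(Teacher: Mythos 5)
The paper states Lemma~\ref{2.8} without proof, calling it well known, so there is no argument of the authors' to compare against; your proposal is the only proof on the table. Your treatment of the first inclusion is the standard one and is essentially correct, with one technical caveat: uniform convergence of $p_n$ to $(z-\lambda)^{-1}$ on the compact set $\sigma(S)^\land$ alone does not give $\|p_n(S)-(S-\lambda)^{-1}\|\to 0$, because the Riesz--Dunford calculus is controlled by suprema over a contour surrounding $\sigma(S)$, not over the spectrum itself. Either run Runge's theorem on a slightly larger polynomially convex compact neighborhood of $\sigma(S)^\land$ that still avoids $\lambda$, or argue more directly: for fixed $x\in E$ and $x^*\in E^\bot$ the function $z\mapsto\langle x^*,(S-z)^{-1}x\rangle$ is analytic on $\rho(S)$ and vanishes for $|z|>\|S\|$ by the Neumann series, hence vanishes on the whole unbounded component of $\rho(S)$, which gives $(S-\lambda)^{-1}E\subseteq E$ at once.

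For the second inclusion, the ``obstacle'' you flag is a genuine one: as stated, $\rho(S)\subseteq\rho_F(S|_E)$ is false for a general invariant subspace. Take $S$ to be a countable direct sum of bilateral shifts on $\ell^2(\mathbb Z\times\mathbb N)$ and $E$ the corresponding direct sum of Hardy subspaces; then $0\in\rho(S)$, yet $S|_E$ is an isometry whose range has infinite codimension in $E$, so $0\notin\rho_F(S|_E)$. What holds in general is only $\rho(S)\subseteq\rho_{\text{s-F}}(S|_E)$: injectivity and closedness of the range come for free, exactly as you argue, and your snake-lemma identification of the cokernel of $(S-\lambda)|_E$ with $\ker(S^E-\lambda)$ is correct. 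Finite-dimensionality of that cokernel needs an extra hypothesis, and the cyclicity argument you supply --- which is precisely the mechanism behind Proposition~\ref{3.11} of Fillmore--Stampfli--Williams --- is the right one: in every place the paper invokes the lemma, $S|_E$ is the operator $U=T^{-1}|_{\mathfrak N}$, which has cyclic vector $T^{-1}x_0$ by Lemma~\ref{4.10'}, so $\mathrm{codim}\,\mathrm{ran}(U-\lambda)\le 1$ whenever the range is closed and the Fredholm conclusion does hold there. So your proof establishes the statement the paper actually uses, but the lemma itself should be restated either with the semi-Fredholm conclusion or under a cyclicity hypothesis on $S|_E$.
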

\section{Existence of right inverses}
In this section, we show a characterization related to existence of the right inverse of a bounded linear operator.

\begin{theorem}\label{3.1}
 $T\in\mathfrak B(X)$ admits a right inverse, i.e., there is $S\in\mathfrak B(X)$ so that $TS=I$ if and only if $T$ is surjective and ${\rm ker}T$ is complemented in $X$.
 Consequently,  $TS=I$ implies that $X={\rm ker}T\oplus{\rm ran}S$.
\end{theorem}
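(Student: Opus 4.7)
The proof will be a standard open-mapping / projection argument. I plan to handle the two implications separately, and along the way produce the decomposition $X=\ker T\oplus\operatorname{ran}S$ as a by-product of the "only if" direction.

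For the "only if" direction, assume $TS=I$ for some $S\in\mathfrak B(X)$. Surjectivity of $T$ is immediate: any $y\in X$ satisfies $y=T(Sy)$. The key observation is that $ST$ is a bounded idempotent, since $(ST)(ST)=S(TS)T=ST$. I would then compute both the range and kernel of this idempotent. For the range, $\operatorname{ran}(ST)\subseteq\operatorname{ran}S$ is trivial, and the reverse inclusion follows from $Sx=S(TS)x=(ST)(Sx)$; hence $\operatorname{ran}S=\operatorname{ran}(ST)$, which in particular is closed. For the kernel, $S$ is injective (if $Sx=0$ then $x=TSx=0$), so $STx=0$ forces $Tx=0$, giving $\ker(ST)=\ker T$. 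Thus $ST$ is a continuous projection onto $\operatorname{ran}S$ along $\ker T$, which yields the topological direct sum $X=\ker T\oplus\operatorname{ran}S$; in particular, $\ker T$ is complemented.

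For the "if" direction, suppose $T$ is surjective and $\ker T$ is complemented, so that $X=\ker T\oplus M$ for some closed subspace $M$. The restriction $T|_M\colon M\to X$ is continuous and, because every $y\in X$ can be written as $Tx$ with $x=x_0+m$ ($x_0\in\ker T$, $m\in M$) and then $y=Tm$, it is surjective; it is also injective since $\ker T\cap M=\{0\}$. The open mapping theorem applied to the bounded bijection $T|_M$ between the Banach spaces $M$ and $X$ produces a bounded inverse, which I would extend (or view) as an operator $S\colon X\to X$ with $\operatorname{ran}S\subseteq M$; by construction $TS=I$.

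The final "consequently" clause has already been established in the first half of the argument, so no extra work is required. I do not expect a real obstacle: the only subtlety is to remember that $\operatorname{ran}S$ is automatically closed because it coincides with the range of the bounded idempotent $ST$, which is what legitimizes calling $\ker T\oplus\operatorname{ran}S$ a (topological) direct sum.
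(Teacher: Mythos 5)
Your proposal is correct, and in the necessity direction it takes a genuinely different (and shorter) route than the paper. The paper proves necessity by setting $Y=\operatorname{ran}S$, $Z=\ker T$, observing that $T|_Y$ is an isomorphism onto $X$ (so $Y$ is closed), passing to the quotient maps $T_{X/Z}$ and $T_{Y/Z}$ to conclude that $Y+Z=X$, and then running a sequence argument ($\|y_n\|=\|z_n\|=1$, $\|y_n-z_n\|\to 0$ would contradict $T|_Y$ being bounded below) to see that the sum $Y+Z$ is a topological direct sum. Your observation that $ST$ is a bounded idempotent with $\operatorname{ran}(ST)=\operatorname{ran}S$ and $\ker(ST)=\ker T$ collapses all of this into the standard fact that a continuous projection induces a topological decomposition $X=\ker(ST)\oplus\operatorname{ran}(ST)$; it delivers the closedness of $\operatorname{ran}S$ and the ``consequently'' clause in one stroke, whereas the paper has to extract these separately. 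In the sufficiency direction your argument (restrict $T$ to a closed complement $M$ of $\ker T$, invoke the open mapping theorem to invert the bounded bijection $T|_M$, and read the inverse as an operator into $X$) is essentially identical to the paper's construction of $U=T|_Y$ and $S=U^{-1}$. The only thing each approach ``buys'': the paper's quotient argument is self-contained at the cost of length, while yours leans on the (entirely standard) correspondence between bounded idempotents and complemented decompositions and is the cleaner proof.
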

\begin{proof}
Sufficiency. Since $TX=X$, $X/{\rm ker}T$ is isomorphic to $X$. Let $Y\subset X$ be a closed subspace so that $X=Y\oplus{\rm ker}T$. Then $Y\approx \,\text{(isomorphic)}\, X$. Let $P: X\rightarrow Y$ be the projection from $X$ along ${\rm ker}T$ to $Y$. Then the restriction $U\equiv TP|_Y=T|_Y$ is an isomorphism from $Y$ to $X$. Therefore,
$U^{-1}: X\rightarrow Y$ is again an isomorphism. Consequently, $TU^{-1}: X\rightarrow X$ is a self-isomorphism of $X$. We finish the proof of the sufficiency by letting $S=U^{-1}(TU^{-1})^{-1}$.

Necessity. Suppose that there is $S\in\mathfrak B(X)$ so that $TS=I$. Clearly, $T$ is surjective and $S^*T^*=(TS)^*=Id_{X^*}$. Thus, $S^*:X^*\rightarrow X^*$ is surjective,  which further entails that $S: X\rightarrow X$ is injective. We write \[Y={\rm ran}S,\;{\rm and\;}\;Z={\rm ker}T.\] Note that $TS=I$ implies that  $T|_Y: Y\rightarrow X$ 
is an  isomorphism.  Therefore, $Y$ is a closed subspace of $X$. This and injectivity of $S$ entail that $S: X\rightarrow Y$ is again an isomorphism.
Next, we define two mappings as follows.
\[T_{X/Z}: X/Z\rightarrow X\;\;{\rm by\;\;}T_{X/Z}(\bar{x})=Tx,\;\bar{x}=x+Z\in X/Z,\;\;\;\;\;\;\;\;\;\;\;\;\;\;\;\;\]
and
\[T_{Y/Z}: Y/Z\rightarrow X\;\;{\rm by\;\;}T_{Y/Z}(\bar{Sx})=(TS)x=x,\;\bar{Sx}=Sx+Z\in Y/Z.\]
Clearly, both $T_{X/Z}$ and $T_{Y/Z}$ are isomorphisms. Since $Y/Z\subset X/Z$ and since the restriction $(T_{X/Z})|_{Y/Z}$ of $T_{X/Z}$ to $Y/Z$ is just $T_{Y/Z}$, we obtain that $$Y/Z=X/Z.$$ Therefore, to show $Z$ is complemented in $X$, it suffices to prove
$Y+Z=Y\oplus Z$, i.e., $Y$ and $Z$ are complemented each other in $Y+Z$. Suppose, to the contrary, that there are two sequences $\{y_n\}\subset Y$,  $\{z_n\}\subset Z$, $\|y_n\|=\|z_n\|=1$, such that $\|y_n-z_n\|\rightarrow 0$. Then \[0=\lim_n\|T\|\cdot\|y_n-z_n\|\geq\lim_n\|T(y_n-z_n)\|=\lim_n\|T(y_n)\|=\lim_n\|T|_Y({y}_n)\|.\]
This contradicts to that $T|_Y: Y\rightarrow X$ is an isomorphism.
\end{proof}
\section{Cowen-Douglas operators}
In this section, we introduce some  properties of Cowen-Douglas operators of index 1 defined on $X$, which play an important role in the proof of the main results of this paper.

Assume that $\Omega$ is a nonempty bounded open connected subset of the complex plane
$\mathbb{C}$, and  $n\in\N$.  In 1978, M.J. Cowen and R.G. Douglas \cite{CD}
introduced a class of operators denoted by $B_n(\Omega)$
which contains $\Omega$ as eigenvalues  of
constant multiplicity $n$. Recall that $$\sigma(T)=\{\lambda\in \mathbb{C}:T-\lambda ~~
\mbox{is not invertible}\},$$ and the closure\;of\;span$\{\mbox{ker}(T-\lambda):\,\lambda\in \Omega\}$ is denoted by $$[\mbox{ker}(T-\lambda):\,\lambda\in \Omega].$$ The class of Cowen-Douglas operators of
rank $n$ on $X$ -- denoted by $B_n(\Omega)$ is defined as follows:
$$\begin{array}{lll}B_n(\Omega)=\Big\{T\in \mathcal{B}(X):
&(1)\,\,\Omega\subset \sigma(T);\\
&(2)\,\,[\mbox{ker}(T-\lambda):\,\lambda\in \Omega]=X;\\
&(3)\,\,\mbox{ran}(T-\lambda)=X,\;{\rm for\;each}\;\lambda\in\Omega;\;{\rm and}\\
&(4)\,\,\mbox{dim ker}(T-\lambda)=n,\; {\rm for\;each}\;\lambda\in\Omega.\Big\}
\end{array}$$

By Theorem \ref{3.1}, the result below follows.
\begin{corollary}\label{3.2}
Assume that $\Omega$ is a nonempty bounded open connected subset of
$\mathbb{C}$. Then for all $n\in\N$, every $T\in B_n(\Omega)$ has a right inverse.
\end{corollary}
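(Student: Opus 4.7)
The plan is to verify directly the two hypotheses of Theorem \ref{3.1} for $T-\lambda$ with $\lambda\in\Omega$: surjectivity of the operator, and complementation of its kernel. Both fall out of the definition of $B_n(\Omega)$ combined with a standard Hahn--Banach construction.

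Concretely, fix any $\lambda\in\Omega$ (which exists since $\Omega$ is nonempty). Condition (3) in the definition of $B_n(\Omega)$ gives $\mathrm{ran}(T-\lambda)=X$, so surjectivity of $T-\lambda$ is immediate. Condition (4) gives $\dim\ker(T-\lambda)=n<\infty$, so the kernel is a finite-dimensional subspace of $X$.

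The only remaining ingredient is the standard fact that every finite-dimensional subspace of a Banach space is complemented. I would sketch this via Hahn--Banach: fix a basis $\{e_1,\dots,e_n\}$ of $\ker(T-\lambda)$, take the coordinate functionals on this subspace, extend each to a bounded functional $e_i^{*}\in X^{*}$ using Hahn--Banach, and put $P=\sum_{i=1}^{n} e_i^{*}(\cdot)\,e_i$. Then $P$ is a bounded idempotent on $X$ with range $\ker(T-\lambda)$, so the kernel is complemented in $X$.

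With both hypotheses of Theorem \ref{3.1} verified, the theorem yields $S\in\mathfrak{B}(X)$ with $(T-\lambda)S=I$. There is essentially no obstacle here: the corollary is a mechanical combination of the defining properties of Cowen--Douglas operators with Theorem \ref{3.1}, the only non-trivial input being the well-known complementation of finite-dimensional subspaces. If the conclusion is to be read literally as ``$T$ itself has a right inverse,'' the same argument delivers this whenever $0\in\Omega$; otherwise one should interpret the assertion as applying to $T-\lambda$ for each $\lambda\in\Omega$, since $T$ need not be surjective in general.
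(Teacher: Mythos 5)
Your proof is correct and follows exactly the route the paper intends: the paper simply states that the corollary follows from Theorem \ref{3.1}, and your argument supplies the (routine) missing details --- surjectivity from condition (3), finite-dimensionality of the kernel from condition (4), and complementation of finite-dimensional subspaces via Hahn--Banach. Your closing caveat about the literal reading (that one gets a right inverse of $T-\lambda$ for $\lambda\in\Omega$, and of $T$ itself only when $0\in\Omega$) is also well taken, and is consistent with how the paper actually uses the corollary in Lemma \ref{3.3}, where it first translates so that $\lambda_0=0$.
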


\begin{lemma}\label{3.3}
Suppose $\Omega$ is a nonempty bounded open connected subset of
$\mathbb{C}$,  $T\in B_1(\Omega)$ and that $\lambda_0\in \Omega$. Then
\begin{equation}\label{3.4}
[{\rm ker}(T-\lambda_0)^{k},\,k\geq1]=X=[{\rm ker}(T-\lambda),\,\lambda\in\Omega].\end{equation}
\end{lemma}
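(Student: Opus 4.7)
The plan is to note that the right-hand equality $X=[\ker(T-\lambda):\lambda\in\Omega]$ is simply clause (2) of the definition of $B_1(\Omega)$, so the entire content of the lemma is the equality
\[
[\ker(T-\lambda_0)^{k},\,k\geq 1]=X.
\]
I will prove this using Hahn--Banach: it suffices to show that any $x^*\in X^*$ which annihilates $\bigcup_{k\geq 1}\ker(T-\lambda_0)^k$ must be $0$.

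The key tool is a standard fact from Cowen--Douglas theory: since $T\in B_1(\Omega)$, there exists a holomorphic section $e\colon\Omega\to X$ such that $e(\lambda)\neq 0$ and $(T-\lambda)e(\lambda)=0$ for every $\lambda\in\Omega$. (This is one of the basic geometric features of the class $B_n(\Omega)$, coming from the fact that $\ker(T-\lambda)$ forms a holomorphic line bundle over $\Omega$; locally it is obtained, for example, by fixing a right inverse supplied by Corollary~\ref{3.2} and using analyticity of the Riesz projection.) Given such $e(\cdot)$, I would differentiate the identity $(T-\lambda)e(\lambda)=0$ repeatedly with respect to $\lambda$ to obtain
\[
(T-\lambda)e^{(k)}(\lambda)=k\,e^{(k-1)}(\lambda)\qquad(k\geq 1),
\]
and then show inductively that $(T-\lambda)^{k+1}e^{(k)}(\lambda)=0$; in particular
\[
e^{(k)}(\lambda_0)\in \ker(T-\lambda_0)^{k+1}\qquad(k\geq 0).
\]

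With this in hand, suppose $x^*\in X^*$ annihilates every $\ker(T-\lambda_0)^k$. Set $f(\lambda)=\langle x^*,e(\lambda)\rangle$. Then $f$ is holomorphic on the connected open set $\Omega$, and
\[
f^{(k)}(\lambda_0)=\langle x^*,e^{(k)}(\lambda_0)\rangle=0\qquad\text{for all }k\geq 0.
\]
By the identity theorem, $f\equiv 0$ on $\Omega$, so $x^*$ vanishes on $e(\lambda)$ for every $\lambda\in\Omega$. Since $\dim\ker(T-\lambda)=1$, this means $x^*$ annihilates $\ker(T-\lambda)$ for all $\lambda\in\Omega$, and invoking clause (2) of the definition of $B_1(\Omega)$ we conclude $x^*=0$. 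This proves the first equality, and completes the lemma.

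The main obstacle is the invocation of the holomorphic eigenvector $e(\lambda)$: this is the heart of Cowen--Douglas theory, and while standard, it is the one nontrivial ingredient not already stated in the excerpt. In the write-up I would either cite Cowen--Douglas directly or briefly sketch how to construct $e(\lambda)$ locally from a right inverse of $T-\lambda_0$ (whose existence in a neighborhood of $\lambda_0$ is guaranteed by Corollary~\ref{3.2} together with openness of the set of operators with this property), and then extend along $\Omega$ by connectedness; the rest of the argument is the routine analytic continuation computation sketched above.
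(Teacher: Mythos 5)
Your proposal is correct and follows essentially the same route as the paper: both arguments produce a holomorphic eigenvector section (the paper builds it explicitly near $\lambda_0$ from the right inverse $S$ of Corollary~\ref{3.2} as $e(\lambda)=S_\lambda(x_0)=\sum_n\lambda^nS^nx_0$, then extends it to a frame on $\Omega$), apply the identity theorem to $\langle x^*,e(\cdot)\rangle$, and finish with clause (2) of the definition of $B_1(\Omega)$. Your computation $e^{(k)}(\lambda_0)\in\ker(T-\lambda_0)^{k+1}$ carries the same information as the paper's identity $\ker T^{k}=[x_0,Sx_0,\dots,S^{k-1}x_0]$, since in that frame $e^{(k)}(\lambda_0)=k!\,S^{k}x_0$; and the existence of the global frame is invoked in the paper with exactly the same level of justification as in your write-up.
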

\begin{proof}
Without loss of generality, we can assume that $\lambda_0=0$; Otherwise, we substitute $T-\lambda_0$ for $T$.
 Since $T$ is surjective with ${\rm dim ker}T=1$, by Corollary \ref{3.2}, there exists an operator $S\in \mathfrak B(X)$ such that $TS=I$.
  Given $ x_0\in {\rm ker}T$, choose $0<\delta<\|S\|$ and let $O_\delta=\{\lambda\in\mathbb C: |\lambda|<\delta\}$. Then 
 \[S_\lambda\equiv\sum_{n=0}^\infty \lambda^nS^n\in \mathfrak B(X),\] and $S_{(\cdot)}$ is a holomorphic map from $O_\delta$ into $\mathfrak B(X)$. Note that \[TS_\lambda(x_0)=\lambda S_\lambda(x_0), {\rm for \;all\;}\lambda\in O_\delta.\] Then \[S_\lambda (x_0)\in{\rm ker}(T-\lambda),\;{\rm for \;all\;}\lambda\in O_\delta.\]

We first show the second equality in (\ref{3.4}), i.e., $[{\rm ker}(T-\lambda),\,\lambda\in\Omega]=X$, which is equivalent to
\begin{equation}\label{3.5}[S_\lambda(x_0):\,\lambda\in O_\delta]=X.\end{equation}
Notice that $S_\lambda(x_0)$ is holomorphic on $O_\delta$ and that $T\in B_1(\Omega)$. There
 exists a holomorphic frame $e(\cdot):\,\Omega\rightarrow X$ such that $e(\lambda)=S_\lambda(x_0)$  and
  $Te(\lambda)=\lambda e(\lambda)$ for all $\lambda\in O_\delta$.
To prove \ref{3.5}, it suffices to show that for any $x^*\in X^*$,
\begin{equation}\label{3.6}\langle x^*, S_\lambda(x_0)\rangle=0\;{\rm for\; all\;} \lambda\in O_\delta\;\;{\rm implies\;}x^*=0.\end{equation}
Assume that $\langle x^*, S_\lambda(x_0)\rangle=0\;{\rm for\; all\;} \lambda\in O_\delta$. Then $\langle x^*, e(\lambda)\rangle=0\;{\rm for\; all\;} \lambda\in O_\delta$.
Note that $\langle x^*, e(\cdot)\rangle$ is a holomorphic function on $\Omega$. We obtain that  $\langle x^*, e(\lambda)\rangle=0,\;{\rm for\; all\;} \lambda\in\Omega,$ and which entails
 $x^*=0$. Thus, (\ref{3.5}) holds. The second equality in (\ref{3.4}) has been shown.


To show the first equality in (\ref{3.4}),   note that \[\langle x^*,S_\lambda (x_0)\rangle=\sum_{n=0}^\infty \lambda^n\langle x^*,S^n(x_0)\rangle\] is a holomorphic function of $\lambda\in O_\delta$ for every $x^*\in X^*$. Then a sufficient and necessary condition for $\langle x^*,S_{(\cdot)} (x_0)\rangle=0$ on $O_\delta$ is that $\langle x^*,S^n (x_0)\rangle=0$  for all $n\geq 0$. Therefore, (\ref{3.5})  entails $[S^n(x_0):\, n\geq 0]=X$. Since $${\rm ker} T^k=[x_0,S(x_0),\cdots,S^{k-1}(x_0)]$$ for all $k\in\N$, we obtain
$[{\rm ker} T^k:k\in\N]=X$. This is just the first equality in (\ref{3.4}).
\end{proof}
The following property immediately follows from Lemma~\ref{3.3}.

\begin{lemma}\label{3.7}
Suppose that $\Omega_{1},\ \Omega_{2}$ are bounded connected open subsets of $\mathbb{C}$. If $\Omega_{1}\subset\Omega_{2}$, then $B_{1}(\Omega_{2})\subseteq B_{1}(\Omega_{1})$.
\end{lemma}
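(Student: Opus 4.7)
The plan is to verify the four defining conditions of membership in $B_1(\Omega_1)$ for any $T \in B_1(\Omega_2)$. Conditions $(1)$, $(3)$, and $(4)$ from the definition are pointwise requirements on the parameter $\lambda$, so since $\Omega_1 \subset \Omega_2$ they transfer immediately: $\Omega_1 \subset \Omega_2 \subset \sigma(T)$, surjectivity of $T - \lambda$ holds for every $\lambda \in \Omega_2$ hence for every $\lambda \in \Omega_1$, and the same goes for $\dim \ker(T - \lambda) = 1$.

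The substantive content is condition $(2)$, namely $[\ker(T - \lambda) : \lambda \in \Omega_1] = X$. For this I would revisit the proof of Lemma \ref{3.3} and extract a local strengthening of what is stated there. That argument, via the power series $S_\lambda(x_0) = \sum_{n \geq 0} \lambda^n S^n(x_0)$ converging on a disk $O_\delta$ (with $\delta$ controlled by the norm of the right inverse $S$) and via the identity theorem applied to the holomorphic function $\lambda \mapsto \langle x^*, e(\lambda)\rangle$, actually establishes the stronger local density $[\ker(T - \lambda) : \lambda \in O_\delta(\lambda_0)] = X$ for any small enough disk around any chosen base point $\lambda_0 \in \Omega_2$. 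The global equality $[\ker(T-\lambda):\lambda \in \Omega] = X$ appearing in the statement of Lemma \ref{3.3} is merely a corollary of this local density.

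Given the local version, the lemma closes in one line. Assuming $\Omega_1 \neq \emptyset$ (otherwise there is nothing to prove), pick any $\lambda_0 \in \Omega_1$; since $\Omega_1$ is open, shrink $\delta$ so that $O_\delta(\lambda_0) \subset \Omega_1 \subset \Omega_2$. Then
\[
X \;=\; [\ker(T - \lambda) : \lambda \in O_\delta(\lambda_0)] \;\subseteq\; [\ker(T - \lambda) : \lambda \in \Omega_1] \;\subseteq\; X,
\]
yielding equality. The main obstacle is merely the bookkeeping of isolating this local reading of Lemma \ref{3.3}: once one confirms that the holomorphic-frame/identity-theorem step in its proof is genuinely local at $\lambda_0$ and does not secretly invoke the full domain $\Omega_2$, the inclusion $B_1(\Omega_2) \subseteq B_1(\Omega_1)$ is essentially formal.
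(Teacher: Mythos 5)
Your proof is correct and takes essentially the same route as the paper, which simply derives the lemma from Lemma \ref{3.3}: the proof of that lemma does establish the local density $[\ker(T-\lambda):\lambda\in O_{\delta}(\lambda_{0})]=X$ at any base point $\lambda_{0}$ via the holomorphic family $S_{\lambda}(x_{0})$, and conditions (1), (3), (4) transfer pointwise exactly as you say. (One minor remark: the identity-theorem step in Lemma \ref{3.3} does legitimately use the full domain $\Omega_{2}$ in its hypotheses --- what matters, and what you correctly exploit, is that its conclusion is local.)
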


\begin{lemma}\label{3.8}
Suppose that $\Omega$ is a nonempty bounded open connected subset of
$\mathbb{C}$, and $T\in B_1(\Omega)$. Then

i) ${\rm ran}(T-\lambda)$ is dense in $X$ for all $\lambda\in \mathbb{C}$;

ii) $\sigma_{p}(T^*)=\emptyset$:

iii) $T$ is strongly irreducible; consequently,

iv)  $\sigma(T)$ is connected.
\end{lemma}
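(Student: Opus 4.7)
I would prove the four items roughly in the order (ii) $\Rightarrow$ (i), then (iii), then (iv), exploiting the holomorphic frame $e:\Omega\to X$ with $T e(\mu)=\mu e(\mu)$ that was produced in the proof of Lemma~\ref{3.3} (concretely, $e(\lambda)=S_\lambda(x_0)$ for a right inverse $S$ of $T-\lambda_{0}$), together with the density statement $[e(\mu):\mu\in\Omega]=X$.

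\textbf{Steps (ii) and (i).} To show $\sigma_p(T^*)=\emptyset$, suppose $T^*x^*=\lambda x^*$ for some $x^*\neq 0$ and some $\lambda\in\mathbb{C}$. Then for every $\mu\in\Omega$,
\[
\lambda\,\langle x^*,e(\mu)\rangle=\langle T^*x^*,e(\mu)\rangle=\langle x^*,Te(\mu)\rangle=\mu\,\langle x^*,e(\mu)\rangle,
\]
so $(\lambda-\mu)\langle x^*,e(\mu)\rangle=0$ on $\Omega$, forcing $\langle x^*,e(\mu)\rangle=0$ for all $\mu\in\Omega\setminus\{\lambda\}$. Since $\mu\mapsto\langle x^*,e(\mu)\rangle$ is holomorphic on $\Omega$, it vanishes identically, and then the density in Lemma~\ref{3.3} gives $x^*=0$, a contradiction. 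This proves (ii). Item (i) then follows from the identity $\overline{{\rm ran}(T-\lambda)}={}^{\perp}\!\ker(T-\lambda)^{*}={}^{\perp}\!\ker(T^{*}-\lambda)=X$, since $\lambda\notin\sigma_p(T^*)$.

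\textbf{Step (iii).} Let $P\in\mathcal{A}'(T)$ be an idempotent. For each $\mu\in\Omega$, $P$ preserves $\ker(T-\mu)$, which is one-dimensional, so there is a scalar $c(\mu)$ with $Pe(\mu)=c(\mu)e(\mu)$, and $c(\mu)\in\{0,1\}$ since $P^{2}=P$. Picking any $y^{*}\in X^{*}$ with $\langle y^{*},e(\mu_0)\rangle\neq 0$ at some base point, the representation $c(\mu)=\langle y^{*},Pe(\mu)\rangle/\langle y^{*},e(\mu)\rangle$ exhibits $c$ as a holomorphic $\{0,1\}$-valued function on the (open) set where the denominator is nonzero; hence $c$ is locally, and then globally, constant on $\Omega$. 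If $c\equiv 0$ then $P$ vanishes on $[e(\mu):\mu\in\Omega]=X$, so $P=0$; if $c\equiv 1$ then $(I-P)$ vanishes on the same dense set, so $P=I$. Thus $T$ is strongly irreducible.

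\textbf{Step (iv).} This is immediate from (iii) by the argument of Corollary~\ref{2.4}: were $\sigma(T)$ disconnected, the Riesz functional calculus would produce a non-trivial idempotent in $\mathcal{A}'(T)$, contradicting strong irreducibility. The only delicate point in the whole plan is the holomorphic dependence and density of the frame $e(\cdot)$, which is already packaged in Lemma~\ref{3.3}; granted that, each of (i)--(iv) reduces to a short verification. The main conceptual obstacle is ensuring the scalar function $c(\mu)$ in Step~(iii) is genuinely holomorphic, for which the coefficient-extraction via a fixed functional $y^{*}$ is the cleanest device.
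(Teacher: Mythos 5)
Your proposal is correct and rests on the same machinery as the paper's proof: the holomorphic eigenvector frame $e(\mu)=S_\mu(x_0)$ with $Te(\mu)=\mu e(\mu)$ and the density $[e(\mu):\mu\in\Omega]=X$ from Lemma~\ref{3.3}. For (i) and (ii) you merely reverse the direction of the duality: the paper shows directly that $[(T-\gamma)e(\mu):\mu\in\Omega]=[(\mu-\gamma)e(\mu):\mu\in\Omega]=X$ for $\gamma\notin\Omega$, getting density of the range first and $\sigma_p(T^*)=\emptyset$ as a corollary, whereas you kill the point spectrum of $T^*$ first and recover density of the range from the annihilator identity; the two are equivalent and equally short. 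The genuine divergence is in (iii). The paper observes that $Pe(\lambda)$ and $(I-P)e(\lambda)$ both lie in $\ker(T-\lambda)$ and then asserts they are linearly independent to contradict $\dim\ker(T-\lambda)=1$; as written this is not justified (two vectors in a one-dimensional space cannot be independent, and the paper does not explain why both are nonzero at a common $\lambda$). Your argument -- writing $Pe(\mu)=c(\mu)e(\mu)$ with $c(\mu)^2=c(\mu)$, showing $c$ is continuous by coefficient extraction against a functional $y^*$ nonvanishing near each base point, and concluding $c\equiv 0$ or $c\equiv 1$ on the connected set $\Omega$, hence $P=0$ or $P=I$ on the dense span -- is more careful and actually repairs the weak spot in the paper's write-up. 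Item (iv) is handled identically in both.
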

\begin{proof}
 Assume that $e(\lambda)$ is a non-zero eigenvector of $T$ with respect to $\lambda\in\Omega$, i.e., $Te(\lambda)=\lambda e(\lambda)$. Then by Lemma \ref{3.3}, $$[e(\lambda):\,\lambda\in\Omega]=[\mbox{ker}(T-\lambda):\,\lambda\in\Omega]=X.$$  Note that  \[(T-\gamma)e(\lambda)=(\lambda-\gamma)e(\lambda),\,{\rm for\;all\;}\lambda\in\Omega,\gamma\in \mathbb{C},\]
 and that  ${\rm ran}(T-\lambda)=X$, whenever  $\lambda\in \Omega$. 

For any fixed $\gamma\in\mathbb C\setminus\Omega$, \[[(T-\gamma)e(\lambda):\,\lambda\in \Omega]=[(\lambda-\gamma)e(\lambda):\,\lambda\in \Omega]=[e(\lambda):\,\lambda\in \Omega]=X.\]
This shows that ${\rm ran}(T-\gamma)$ is dense in $X$, consequently, $\ker(T-\gamma)^*=\{0\}$. Therefore, ii) $\sigma_{p}(T^*)=\emptyset$ has been shown.

To show iii), we first note that for every $\lambda\in\Omega$, $\ker(T-\lambda)=[e(\lambda)]$. Suppose, to the contrary, that there exists a nontrivial idempotent $P\in \mathcal{A}'(T)$. Then $$(TP)e(\lambda)=(PT)e(\lambda)=\lambda Pe(\lambda).$$
 Thus, $Pe(\lambda)\in \ker(T-\lambda).$  Analogously, we obtain $$(I-P)e(\lambda)\in \ker(T-\lambda).$$
 Since $[e(\lambda):\lambda\in \Omega]=X$, both $Pe(\lambda)$ and $(I-P)e(\lambda)$ are  non-zero holomorphic  valued functions.
Since the two vectors $Pe(\lambda)$ and $(I-P)e(\lambda)$ are linearly independent,  dimker$(T-\lambda)\geq 2$. This contradicts to  $T\in B_1(\Omega).$
While iv) is a direct consequence of iii).
\end{proof}

\begin{lemma}\label{3.9}
For every $T\in B_{1}(\Omega)$, there exists a connected open subset $\Omega_{\rm max}$ of $\mathbb C$ such that
\begin{enumerate}
\item  \;$\Omega\subseteq\Omega_{\rm max}$;
\item \; $B_{1}(\Omega_{\rm max})\subseteq B_{1}(\Omega)$;
\item \; $\sigma_{e}(T)\supset\partial\Omega_{\rm max}$, the boundary of $\Omega_{\rm max}$.
\end{enumerate}
In this case, we call $\Omega_{\rm max}$ the maximal domain of $T$.
\end{lemma}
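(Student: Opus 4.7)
The natural candidate is to let $\Omega_{\max}$ be the connected component of the Fredholm domain $\rho_F(T)=\mathbb{C}\setminus\sigma_e(T)$ that contains $\Omega$. First I would verify that this is well defined: for each $\lambda\in\Omega$ the defining conditions (3), (4) of $B_1(\Omega)$ make $T-\lambda$ surjective with one-dimensional kernel, hence Fredholm of index $1$, so $\Omega\subseteq\rho_F(T)$; since $\Omega$ is connected it lies inside a unique component of the open set $\rho_F(T)$, and this component is $\Omega_{\max}$. Condition (1) of the lemma is then immediate.

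The key step is to show $T\in B_1(\Omega_{\max})$, after which condition (2) drops out of Lemma~\ref{3.7}. For any $\lambda\in\Omega_{\max}$, $T-\lambda$ is Fredholm, so its range is closed; Lemma~\ref{3.8}(i) guarantees $\mathrm{ran}(T-\lambda)$ is also dense in $X$, and hence equal to all of $X$. The Fredholm index is locally constant on $\rho_F(T)$, so its value on the connected component $\Omega_{\max}$ agrees with the value on $\Omega$, namely $1$. Combined with surjectivity, this forces $\dim\ker(T-\lambda)=1$, which in turn gives $\Omega_{\max}\subseteq\sigma(T)$ (and in particular $\Omega_{\max}$ is bounded, as $\sigma(T)$ is). The span condition $[\ker(T-\lambda):\lambda\in\Omega_{\max}]=X$ is inherited from the inclusion $\Omega\subseteq\Omega_{\max}$ together with the corresponding property on $\Omega$, so all four defining properties of $B_1(\Omega_{\max})$ hold.

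Condition (3) is then a topological reflex: $\Omega_{\max}$ is open since $\rho_F(T)$ is open. If some $\lambda_0\in\partial\Omega_{\max}$ still lay in $\rho_F(T)$, we could choose an open neighborhood $V\subseteq\rho_F(T)$ of $\lambda_0$; this $V$ meets $\Omega_{\max}$ because $\lambda_0\in\overline{\Omega_{\max}}$, so $V\cup\Omega_{\max}$ would be a connected subset of $\rho_F(T)$ strictly larger than $\Omega_{\max}$, contradicting that $\Omega_{\max}$ is an entire connected component of $\rho_F(T)$. Hence $\partial\Omega_{\max}\subseteq\mathbb{C}\setminus\rho_F(T)=\sigma_e(T)$.

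The main obstacle I anticipate is nothing topological but the upgrade from ``Fredholm of index $1$'' to ``surjective with one-dimensional kernel'' on the whole component $\Omega_{\max}$: a priori $\dim\ker(T-\lambda)$ could jump above $1$ at isolated points inside the Fredholm component, which would break condition (iv) of $B_1(\Omega_{\max})$ and leave us with an isolated boundary point of $\Omega_{\max}$ that sits outside $\sigma_e(T)$. The lever that prevents this pathology is exactly Lemma~\ref{3.8}(i), whose global density statement (itself rooted in the holomorphic frame produced in Lemma~\ref{3.3}) is precisely what pins $\dim\ker(T-\lambda)$ to $1$ throughout $\Omega_{\max}$ and lets the construction close.
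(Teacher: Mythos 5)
Your proof is correct, and it is worth noting that it is organized differently from the paper's. The paper obtains $\Omega_{\rm max}$ abstractly, via Zorn's lemma, as a maximal bounded connected open set with $T\in B_1(\Omega_{\rm max})$ (so that (1) and, through Lemma~\ref{3.7}, (2) are immediate), and then proves (3) by contradiction: if some $\lambda_0\in\partial\Omega_{\rm max}$ lay in $\rho_F(T)$, local constancy of the index together with $\sigma_p(T^*)=\emptyset$ from Lemma~\ref{3.8} would allow one to adjoin a neighborhood of $\lambda_0$ and contradict maximality. You instead identify $\Omega_{\rm max}$ outright as the connected component of $\rho_F(T)$ containing $\Omega$ and verify the four defining conditions of $B_1(\Omega_{\rm max})$ directly, after which (3) reduces to the purely topological fact that the boundary of a component of an open set lies in the complement of that set. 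The analytic ingredients are the same in both arguments -- continuity of the Fredholm index and the global density of ${\rm ran}(T-\lambda)$ from Lemma~\ref{3.8}(i), which is precisely what upgrades ``Fredholm of index one'' to ``surjective with one-dimensional kernel'' and rules out kernel-dimension jumps inside the component; you correctly flag this as the crux. What your version buys is the avoidance of Zorn's lemma and an explicit description of the maximal domain as a Fredholm component, a description the paper relies on implicitly later (e.g.\ in Lemma~\ref{3.13} and Lemma~\ref{4.10'}); the two constructions in fact produce the same set, since once (3) is known the paper's $\Omega_{\rm max}$ is open and relatively closed in $\rho_F(T)$, hence a full component.
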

\begin{proof}
(1) \;It follows from  Zorn's Lemma, there is a maximal bounded connected open subset of $\mathbb{C}$ denoted by $\Omega_{\rm max}$ such that
 $T\in B_{1}(\Omega_{\rm max})$. Due to Lemma \ref{3.7}, (2) follows.

Suppose, to the contrary of (3), that  there exists $\lambda_0\in\partial\Omega_{\rm max}$ such that $\lambda_0$ in $\rho_{F}(T)$. Therefore, there exists a neighborhood $O_{\lambda}$ of $\lambda_0$ such that $$\text{ind}\,(T-\lambda)=1,\,{\rm for\;all}\,\lambda\in O_{\lambda_0}.$$
By Lemma~\ref{3.4}, $\sigma_{p}(T^{*})=\emptyset$. Note dim$\,\ker(T-\lambda)=1$.
Let $\Omega_1=\Omega_{\rm max}\bigcup O_{\lambda}$. Then $\Omega_1$ is again open connected. Thus, $T\in B_{1}(\Omega_1)$. This contradicts to the maximality of $\Omega_{\rm max}$.
\end{proof}



\begin{lemma}\label{3.10}
Suppose that  $\Omega\subset \mathbb C$ is a connected open subset with $0\in\Omega$. Let $S,T\in\mathfrak B(X)$ with $T\in B_1(\Omega)$  and  $TS=I$. If  $0\neq x_{0}\in \ker T$, then $x_{0}\in C(S)$. In particular, $C(S)\neq\emptyset$.
\end{lemma}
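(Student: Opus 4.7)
The plan is to observe that the conclusion $x_0 \in C(S)$ is precisely the statement $[S^k x_0 : k \geq 0] = X$, which is essentially the content of the first equality in \eqref{3.4} of Lemma \ref{3.3}, extracted for the specific right inverse $S$ furnished to us. The main work is simply to note that the argument of Lemma \ref{3.3} works for any right inverse of $T$, not only the one produced by Corollary \ref{3.2}.

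First, I would recall the key construction: since $Tx_0 = 0$ and $TS = I$, for $|\lambda| < 1/\|S\|$ the series $S_\lambda := \sum_{n=0}^\infty \lambda^n S^n$ converges in $\mathfrak{B}(X)$ and satisfies $T S_\lambda(x_0) = \lambda S_\lambda(x_0)$, so $S_\lambda(x_0) \in \ker(T-\lambda)$ for such $\lambda$. Because $0 \in \Omega$ and $\Omega$ is open, I shrink $\delta > 0$ so that the disk $O_\delta := \{|\lambda| < \delta\}$ lies in $\Omega$ and $\delta < 1/\|S\|$; on $O_\delta$, the map $\lambda \mapsto S_\lambda(x_0)$ is holomorphic and lands in the one-dimensional space $\ker(T - \lambda)$.

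Second, I would show $[S_\lambda(x_0) : \lambda \in O_\delta] = X$ by the holomorphic-frame argument of Lemma \ref{3.3}. If $x^* \in X^*$ satisfies $\langle x^*, S_\lambda(x_0)\rangle = 0$ on $O_\delta$, then pairing against a holomorphic frame $e(\cdot):\Omega\to X$ with $\ker(T-\lambda) = [e(\lambda)]$ shows that the holomorphic function $\langle x^*, e(\lambda)\rangle$ vanishes on $O_\delta$, hence on all of $\Omega$ by analytic continuation. Condition $(2)$ in the definition of $B_1(\Omega)$, namely $[\ker(T-\lambda) : \lambda \in \Omega] = X$, then forces $x^* = 0$.

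Third, I would convert this into cyclicity. Since
\[
\langle x^*, S_\lambda(x_0)\rangle = \sum_{n=0}^\infty \lambda^n \langle x^*, S^n x_0\rangle
\]
is the power-series expansion of a holomorphic function of $\lambda \in O_\delta$, it vanishes identically on $O_\delta$ if and only if $\langle x^*, S^n x_0\rangle = 0$ for every $n \geq 0$. Combining this with the previous step yields $[S^n x_0 : n \geq 0] = X$, i.e., $x_0 \in C(S)$. Since $T \in B_1(\Omega)$ forces $\dim \ker T = 1 \ne 0$, a nonzero $x_0 \in \ker T$ exists, so $C(S) \ne \emptyset$. There is no serious obstacle here; the only conceptual point is verifying that the computation $TS_\lambda(x_0) = \lambda S_\lambda(x_0)$ uses only $Tx_0 = 0$ and $TS = I$, so the argument transfers to the given right inverse $S$ without modification.
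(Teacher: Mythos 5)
Your proposal is correct and takes essentially the same route as the paper: the paper's own proof just cites the first equality of \eqref{3.4} in Lemma \ref{3.3} together with the identification $\ker T^{k}=[x_{0},Sx_{0},\dots,S^{k-1}x_{0}]$ (valid for any right inverse), whereas you inline the holomorphic-eigenvector and power-series argument for the given $S$. The substance is identical, and your explicit check that the argument depends only on $Tx_{0}=0$ and $TS=I$ is a worthwhile clarification of the point the paper leaves implicit.
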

\begin{proof} Note $\ker T=[x_0]$.
 $TS=I$ implies $\ker T^k=[x_{0},Sx_{0},\cdots, S^{(k-1)}x_{0}].$
It follows that
$[S^{k}e_{0},k\geq0]=X.$ Therefore,  $x_{0}\in \mathcal{C}(S)$.
\end{proof}

The following result is due to P.A. Fillmore, J.G. Stampfli and J.P. Williams~\cite{FSW}. (See, also ~\cite{Her}.)
\begin{proposition}\label{3.11}{\rm \cite{FSW}}
Let $T\in \mathcal{B}(X)$. If there is  $\lambda\in\mathbb{C}$ such that $${\rm dimker}(T-\lambda)^{*}\geq2,$$ then $C(T)=\emptyset$.
\end{proposition}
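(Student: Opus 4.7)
My plan is to exploit the duality between eigenvectors of $T^*$ at $\lambda$ and the codimension of $\overline{\mathrm{ran}(T-\lambda)}$ in $X$. Recall that $\ker(T-\lambda)^* = (\mathrm{ran}(T-\lambda))^\perp$, so the hypothesis $\dim\ker(T-\lambda)^*\geq 2$ is equivalent to saying that $\overline{\mathrm{ran}(T-\lambda)}$ has codimension at least $2$ in $X$. The strategy is to show that for any $x\in X$, the closed linear span $[T^k x:k\geq 0]$ has at most one-dimensional image in the quotient $X/\overline{\mathrm{ran}(T-\lambda)}$, so it cannot be all of $X$.

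The key computational step is the identity, for each $k\in\N$ and each $x\in X$,
\begin{equation*}
T^k x = \bigl((T-\lambda)+\lambda\bigr)^k x = \lambda^k x + \sum_{j=1}^{k}\binom{k}{j}\lambda^{k-j}(T-\lambda)^j x,
\end{equation*}
which shows that $T^k x - \lambda^k x\in\mathrm{ran}(T-\lambda)$. Let $q\colon X\to X/\overline{\mathrm{ran}(T-\lambda)}$ denote the quotient map. Then $q(T^k x)=\lambda^k q(x)$ for every $k\geq 0$. Consequently, $q\bigl(\mathrm{span}\{T^k x: k\geq 0\}\bigr)\subseteq\mathbb{C}\,q(x)$, which is at most one-dimensional; by continuity of $q$, the same is true for the closed linear span $[T^k x]$.

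Now suppose, toward a contradiction, that $x\in C(T)$, i.e., $[T^k x : k\geq 0]=X$. Applying $q$ and using continuity, we would obtain
\begin{equation*}
X/\overline{\mathrm{ran}(T-\lambda)} = q(X) = q\bigl([T^k x:k\geq 0]\bigr)\subseteq \overline{\mathbb{C}\,q(x)},
\end{equation*}
so the quotient would be at most one-dimensional. This contradicts the hypothesis that $\dim\ker(T-\lambda)^*\geq 2$, which forces $\dim X/\overline{\mathrm{ran}(T-\lambda)}\geq 2$. Therefore no such $x$ exists and $C(T)=\emptyset$.

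I do not foresee any genuine obstacle here; the only subtlety is making sure that passing from $\mathrm{span}\{T^k x\}$ to its closure is legitimate, which follows immediately because $\overline{\mathrm{ran}(T-\lambda)}$ is closed and so $q$ is continuous. If one prefers a dual formulation, the same argument can be phrased by picking two linearly independent $f_1,f_2\in\ker(T-\lambda)^*$, noting that $\langle f_i,T^k x\rangle=\lambda^k\langle f_i,x\rangle$, and observing that density of $\{p(T)x\}$ would force the ratios $\langle f_1,y\rangle/\langle f_1,x\rangle$ and $\langle f_2,y\rangle/\langle f_2,x\rangle$ to agree on a dense set, hence everywhere, contradicting linear independence of $f_1,f_2$.
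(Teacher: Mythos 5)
Your proof is correct. The paper does not prove Proposition \ref{3.11} at all --- it is quoted from Fillmore--Stampfli--Williams \cite{FSW} --- so there is nothing to compare against, but your argument is the standard one and is complete: the identification $\ker(T-\lambda)^*=(\mathrm{ran}(T-\lambda))^{\perp}\cong(X/\overline{\mathrm{ran}(T-\lambda)})^{*}$ is right, the binomial computation correctly gives $q(T^{k}x)=\lambda^{k}q(x)$, and the passage from the span to its closure is legitimately handled via $q(\overline{A})\subseteq\overline{q(A)}$ together with the fact that a one-dimensional subspace is closed.
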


For $T\in\mathfrak B(X)$, we use $\rho_{\text{s-F}}(T)$ to denote the semi-Fredholm domain of $T$, that is, a sufficient and necessity condition for $\lambda\in\rho_{\text{s-F}}(T)$ is that $\mbox{ran}(T-\lambda)$ (the range of $T-\lambda$) is closed, and  either  ker($T-\lambda$), or ran($T-\lambda$) is finite dimensional.  This is equivalent to that $T-\lambda$ is semi-Fredholm.
The following result is due to D.A. Herrero \cite{Her}.
\begin{proposition}\label{3.12}{\rm \cite{Her}}
Let $T\in \mathcal{B}(X)$. If $\mathcal{C}(T)\neq\emptyset$, then \[\rho_{\text{s-F}}^{-1}(T)\equiv\Big\{\lambda:\, \text{ind}(T-\lambda)=-1\Big\}\] is simply connected.
\end{proposition}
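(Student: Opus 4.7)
The plan is to assume, for contradiction, that $\Omega := \rho_{\text{s-F}}^{-1}(T)$ is not simply connected, and to show that every point of a bounded region enclosed by a loop in $\Omega$ must itself lie in $\Omega$. Fix $x_0 \in \mathcal C(T)$. By Proposition~\ref{3.11}, $\dim \ker(T - \lambda)^* \le 1$ for every $\lambda$; combined with $\text{ind}(T - \lambda) = -1$ and closed range on $\Omega$, this forces $\dim \ker(T - \lambda) = 0$ and $\dim \ker(T^* - \lambda) = 1$ on $\Omega$. Moreover $x_0 \notin \text{ran}(T - \lambda)$ for $\lambda \in \Omega$: a nonzero $\xi \in \ker(T^* - \lambda)$ annihilates the range, and $\xi(x_0) = 0$ together with $\xi(T^n x_0) = \lambda^n \xi(x_0) = 0$ would make $\xi$ vanish on the dense orbit of $x_0$, contradicting $\xi \ne 0$. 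Hence $X = \text{ran}(T - \lambda) \oplus \mathbb C x_0$. The standard local parametrization of the one-dimensional semi-Fredholm kernel bundle of $T^* - \lambda$ together with this trivialization produces a holomorphic section $\phi : \Omega \to X^*$ with $\phi(\lambda) \in \ker(T^* - \lambda)$ and $\phi(\lambda)(x_0) = 1$; correspondingly one has a holomorphic family $S : \Omega \to \mathfrak B(X)$ characterized by
$$S_\lambda (T - \lambda) = I, \qquad (T - \lambda) S_\lambda = I - x_0 \otimes \phi(\lambda).$$

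The engine of the argument is a pair of cyclicity-driven vanishing integrals. From $T^* \phi(\lambda) = \lambda \phi(\lambda)$ and $\phi(\lambda)(x_0) = 1$ one gets $\phi(\lambda)(T^n x_0) = \lambda^n$, and from $T^n - \mu^n = (T - \mu) \sum_{k=0}^{n-1} \mu^{n-1-k} T^k$ together with the defining relation of $S_\mu$ one obtains
$$S_\mu(T^n x_0) = \sum_{k=0}^{n-1} \mu^{n-1-k} T^k x_0,$$
polynomial in $\mu$ with coefficients in $X$. Therefore $\int_\gamma \phi(\mu)(T^n x_0)\, d\mu = 0$ and $\int_\gamma S_\mu(T^n x_0)\, d\mu = 0$ for every cycle $\gamma \subset \Omega$; continuity of $\phi$ and $S$ on the compact set $\gamma$ and density of $\{p(T) x_0 : p \in \mathbb C[\lambda]\}$ in $X$ promote this to
$$\int_\gamma \phi(\mu)\, d\mu = 0 \ \text{in}\ X^*, \qquad \int_\gamma S_\mu\, d\mu = 0 \ \text{in}\ \mathfrak B(X).$$

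Now pick a simple closed curve $\gamma \subset \Omega$ whose bounded interior $V$ contains a point $\lambda_0 \notin \Omega$, and for $\lambda \in V$ set
$$\widetilde\phi(\lambda) = \frac{1}{2\pi i} \int_\gamma \frac{\phi(\mu)}{\mu - \lambda}\, d\mu, \qquad \widetilde S_\lambda = \frac{1}{2\pi i} \int_\gamma \frac{S_\mu}{\mu - \lambda}\, d\mu.$$
Splitting each numerator via $\mu = (\mu - \lambda) + \lambda$, invoking the two vanishing integrals above and the residue $\frac{1}{2\pi i}\int_\gamma \frac{d\mu}{\mu - \lambda} = 1$, a direct computation gives, for every $\lambda \in V$,
\begin{align*}
(T^* - \lambda) \widetilde\phi(\lambda) &= \frac{1}{2\pi i} \int_\gamma \phi(\mu)\, d\mu = 0, \qquad \widetilde\phi(\lambda)(x_0) = 1,\\
(T - \lambda) \widetilde S_\lambda &= I - x_0 \otimes \widetilde\phi(\lambda), \qquad \widetilde S_\lambda (T - \lambda) = I.
\end{align*}
The second line exhibits a left inverse and a right quasi-inverse with codimension-one defect, so $T - \lambda$ is injective with closed range $\ker \widetilde\phi(\lambda)$ of codimension one---i.e., Fredholm of index $-1$---for every $\lambda \in V$. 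Hence $V \subseteq \Omega$, contradicting $\lambda_0 \in V \setminus \Omega$. The main obstacle is establishing the two vanishing integrals $\int_\gamma \phi(\mu)\, d\mu = 0$ and $\int_\gamma S_\mu\, d\mu = 0$---the step in which the cyclic vector hypothesis is essential---after which the Cauchy-type extension and the Fredholm identification are formal.
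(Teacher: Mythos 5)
The paper does not prove this proposition; it is quoted from Herrero \cite{Her} and used as a black box, so there is no internal proof to compare against. Judged on its own, your argument is correct and complete in all essentials: the deduction $\dim\ker(T-\lambda)=0$, $\dim\ker(T^*-\lambda)=1$ on $\Omega$ from Proposition \ref{3.11} and $\mathrm{ind}=-1$ is right; the cyclicity argument showing $x_0\notin\mathrm{ran}(T-\lambda)$ (hence $X=\mathrm{ran}(T-\lambda)\oplus\mathbb{C}x_0$) is right; the identities $\phi(\mu)(T^nx_0)=\mu^n$ and $S_\mu(T^nx_0)=\sum_{k=0}^{n-1}\mu^{n-1-k}T^kx_0$ (the latter using $S_\mu x_0=0$, which holds because $S_\mu$ kills the complement $\mathbb{C}x_0$ of the range) are right; and the Cauchy-integral extension of $\phi$ and $S$ across the hole, together with the identities $\widetilde S_\lambda(T-\lambda)=I$ and $(T-\lambda)\widetilde S_\lambda=I-x_0\otimes\widetilde\phi(\lambda)$ with $\widetilde\phi(\lambda)(x_0)=1$, does force $\lambda\in\Omega$ for every $\lambda$ inside the curve, which is the desired contradiction. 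The one step you assert rather than prove is the existence of the holomorphic (even just continuous on $\gamma$) normalized section $\phi$ and the holomorphic family $S_\lambda$; this is standard but worth a line: fixing $\lambda_1\in\Omega$ and $R=S_{\lambda_1}$, one has $B_\lambda:=I+(\lambda_1-\lambda)R$ invertible near $\lambda_1$ with $(T-\lambda)RB_\lambda^{-1}=I-x_0\otimes(B_\lambda^*)^{-1}\phi(\lambda_1)$, so $\phi(\lambda)=(B_\lambda^*)^{-1}\phi(\lambda_1)$ and $S_\lambda=RB_\lambda^{-1}$ give the local holomorphic data, and uniqueness of the normalization $\phi(\lambda)(x_0)=1$ on the one-dimensional kernel glues these to a global section on $\Omega$. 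With that line added, the proof stands.
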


\begin{lemma}\label{3.13}
Let $A,B\in \mathcal{B}(X)$ such that $AB=I$. If ${\rm dimker} A=1$ and $0\neq x\in {\rm ker} A$ such that $[B^nx]_{\{n\geq 0\}}=X$. Then $A\in B_1(\Omega_A)$, where $\Omega_A$ is the connected component of $\rho_{\text{s-F}}(A)$ containing the origin.
\end{lemma}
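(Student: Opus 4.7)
The plan is to verify the four defining conditions of $B_{1}(\Omega_{A})$ using the hypothesis $AB=I$, the finite-dimensional kernel, and the cyclicity of $x$ under $B$.

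First I would establish the Fredholm structure on $\Omega_{A}$. Since $AB=I$ forces $A$ to be surjective, and $\dim\ker A=1$, the operator $A$ is Fredholm at $0$ with $\mathrm{ind}(A)=1-0=1$. In particular $0\in\rho_{\text{s-F}}(A)$, so $\Omega_{A}$ is a nonempty open connected set, and because the Fredholm index is locally constant on $\rho_{\text{s-F}}(A)$, $\mathrm{ind}(A-\lambda)=1$ throughout $\Omega_{A}$. Next, mimicking the construction in Lemma~\ref{3.3}, I would form the holomorphic eigenvector family $x_{\lambda}=\sum_{n=0}^{\infty}\lambda^{n}B^{n}x$ on the disk $|\lambda|<\|B\|^{-1}$. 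A direct computation using $AB^{n}x=B^{n-1}x$ gives $Ax_{\lambda}=\lambda x_{\lambda}$ with $x_{0}=x\neq 0$, so $x_{\lambda}\in\ker(A-\lambda)$ and is nonzero on a neighborhood of $0$.

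The key step is to show $\mathrm{ran}(A-\lambda)=X$ for every $\lambda\in\Omega_{A}$. For $\lambda=0$ this is given. For $\lambda\neq 0$ I would argue that $\ker(A^{*}-\lambda)=\{0\}$: if $f\in X^{*}$ satisfies $A^{*}f=\lambda f$, then applying $f$ to $Ax=0$ yields $\lambda\langle f,x\rangle=0$, hence $\langle f,x\rangle=0$; the recurrence $\langle f,B^{n-1}x\rangle=\langle f,AB^{n}x\rangle=\lambda\langle f,B^{n}x\rangle$ then propagates to $\langle f,B^{n}x\rangle=0$ for all $n\geq 0$, and the cyclicity hypothesis $[B^{n}x]_{n\geq 0}=X$ forces $f=0$. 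Thus $A-\lambda$ has dense range; combined with the closed-range property of semi-Fredholm operators, $\mathrm{ran}(A-\lambda)=X$. Since $\mathrm{ind}(A-\lambda)=1$ and the cokernel vanishes, we get $\dim\ker(A-\lambda)=1$ on all of $\Omega_{A}$, and the nontriviality of the kernel places $\Omega_{A}\subseteq\sigma(A)$.

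It remains to verify the totality condition $[\ker(A-\lambda):\lambda\in\Omega_{A}]=X$. Here I would use the holomorphic family $\{x_{\lambda}\}$: if $\phi\in X^{*}$ annihilates $x_{\lambda}$ for all $\lambda$ in the small disk $|\lambda|<\|B\|^{-1}$, then expanding the power series gives $\langle\phi,B^{n}x\rangle=0$ for every $n\geq 0$, so again cyclicity forces $\phi=0$. Hence $[x_{\lambda}:|\lambda|<\|B\|^{-1}]=X$, which is contained in $[\ker(A-\lambda):\lambda\in\Omega_{A}]$, completing the verification of all four axioms.

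The main obstacle I anticipate is the surjectivity step for $\lambda\neq 0$; everything else is essentially bookkeeping once the Fredholm index is fixed, but the cokernel-killing argument is where the cyclicity hypothesis does all the work, and one has to be careful that the recurrence truly closes up at $n=0$ via the relation $Ax=0$ to force $\langle f,x\rangle=0$ before propagating.
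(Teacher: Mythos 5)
Your proof is correct and follows essentially the same route as the paper: the holomorphic eigenvector family $B_\lambda x=\sum_{n\ge0}\lambda^nB^nx$, local constancy of the index on the semi-Fredholm component $\Omega_A$, and the cyclicity $[B^nx]_{n\ge0}=X$ to obtain totality of the kernels. The one (minor) divergence is that where the paper first establishes $A\in B_1(O_\delta)$ on a small disk and then invokes Lemma~\ref{3.8} (so that $\sigma_p(A^*)=\emptyset$, which together with closedness of the range on $\rho_{\text{s-F}}(A)$ yields surjectivity throughout $\Omega_A$), you prove $\ker(A^*-\lambda)=\{0\}$ for $\lambda\neq0$ directly from the recurrence $\langle f,B^{n-1}x\rangle=\lambda\langle f,B^{n}x\rangle$ anchored at $\langle f,x\rangle=0$ --- a self-contained substitute for the same fact.
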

\begin{proof}
Choose $0<\delta<\|B\|$ and let $O_\delta=\{\lambda\in \mathbb{C}:\, |\lambda|<\delta\}$. For $\lambda\in O_\delta$,
\[
(A-\lambda)B=AB-\lambda B=I-\lambda B.
\]
Note that $I-\lambda B$ is invertible for $\lambda\in O_\delta$. Thus, ${\rm ran}(A-\lambda)=X$.

Since $\text{ind} A=1$, $\text{ind} (A-\lambda)=1$ for $\lambda \in O_\delta$ by the continuity of index. In particular, $\text{dimker}(A-\lambda)=1$ for $\lambda\in O_\delta$. For $\lambda\in O_\delta$, let
\[
B_\lambda=\sum_{n=0}^\infty\lambda^nB^n\in \mathcal{B}(X).
\]
Then $B_\lambda$ is a holomorphic map from $O_\delta$ into $\mathcal{B}(X)$. Note that
\[
AB_\lambda(x)=\lambda B_\lambda(x), \,\text{for all}\, \lambda\in O_\delta.
\]
Then $B_\lambda(x)\in ker (A-\lambda)$ for all $\lambda\in O_\delta$. Note that
\[
[B_\lambda(x)]_{\lambda\in O_\delta}=[B^nx]_{\{n\geq 0\}}=X.
\]
This implies that $A\in B_1(O_\delta)$. By Lemma~\ref{3.8} and the continuity of index, $A\in B_1(\Omega_A)$, where $\Omega_A$ is the connected component of $\rho_{\text{s-F}}(A)$ containing the origin.
\end{proof}

For more properties related to Cowen-Douglas operators defined on Hilbert spaces, we refer the reader to \cite{CFJ}, \cite{HJ}, \cite{JJ}, \cite{Lin} and \cite{JW} .

\section{A bi-orthogonal system }
To begin with this section, we introduce some new notations. Recall that for a subset $A\subset X$, $[A]$ denotes the closure of span$\{A\}$.
 Assume that $T\in\mathcal{B}(X)$ is invertible. For each fixed $x_0\in X$ and each integer $k\geq0$, we write
 \[\mathfrak N_{T,-k}(x_0)=[T^{-n}x:\, n\geq k],\;\mathfrak N_{T,-\infty}(x_0)=\bigcap_{k=1}^\infty\mathfrak{N}_{T,-k}(x_0).\] If there is no confusion, we  denote them as   $\mathfrak N_{-k}$ and $\mathfrak N_{-\infty}$ for all integers $k\geq0$. In particular, we simply write
 \begin{equation}\label{4.0}\mathfrak N\; (=\mathfrak N_{-1}(x_0))\; =[T^{-n}x_0:\, n\geq 1],\;\;\mathfrak N_{-\infty}=\bigcap_{k=1}^\infty\mathfrak{N}_{-k}.\end{equation}

Keeping the notations above in mind, we state and prove the following result.
\begin{lemma}\label{4.1}
Let $T\in\mathcal{B}(X)$ be  invertible.  Then for each fixed $x_0\in X$, \begin{equation}\label{4.1'}\mathfrak N_{-\infty}\in{\rm Lat}T.\end{equation}
\end{lemma}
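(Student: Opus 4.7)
\medskip

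\noindent\textbf{Proof proposal.} The plan is to exploit the fact that although each individual $\mathfrak{N}_{-k}$ is naturally $T^{-1}$-invariant (not $T$-invariant), the action of $T$ merely shifts the index down by one, so it does map $\mathfrak{N}_{-(k+1)}$ into $\mathfrak{N}_{-k}$; intersecting over all $k$ then absorbs this one-step shift.

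First I would record the shift relations at the level of the generating vectors: for each $n\geq k+1$, $T(T^{-n}x_0)=T^{-(n-1)}x_0\in\{T^{-m}x_0:m\geq k\}$. By linearity this gives $T\,\mathrm{span}\{T^{-n}x_0:n\geq k+1\}\subseteq\mathrm{span}\{T^{-m}x_0:m\geq k\}$, and then by continuity of $T$ together with the elementary inclusion $T(\overline{A})\subseteq\overline{T(A)}$,
\[
T\,\mathfrak{N}_{-(k+1)}\subseteq\mathfrak{N}_{-k},\qquad k\geq 1.
\]

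Next I would use this to verify invariance of the intersection. Fix $x\in\mathfrak{N}_{-\infty}$. Then $x\in\mathfrak{N}_{-(k+1)}$ for every $k\geq 1$, so by the displayed inclusion $Tx\in\mathfrak{N}_{-k}$ for every $k\geq 1$. Consequently
\[
Tx\in\bigcap_{k\geq 1}\mathfrak{N}_{-k}=\mathfrak{N}_{-\infty},
\]
which is precisely $T\mathfrak{N}_{-\infty}\subseteq\mathfrak{N}_{-\infty}$, i.e.\ $\mathfrak{N}_{-\infty}\in\mathrm{Lat}\,T$.

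There is no real obstacle: the only thing to be careful about is the direction of the containment in $T(\overline{A})\subseteq\overline{T(A)}$ (equality need not hold in general) and the off-by-one index bookkeeping — it is crucial that $x$ lies in every $\mathfrak{N}_{-(k+1)}$, not merely in $\mathfrak{N}_{-k}$, so that the shift produced by $T$ lands us back inside $\mathfrak{N}_{-k}$. Closedness of each $\mathfrak{N}_{-k}$, and hence of $\mathfrak{N}_{-\infty}$, is built into the definitions, so no additional closure argument is needed.
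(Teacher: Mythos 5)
Your proof is correct and follows essentially the same route as the paper's: establish the one-step shift $T\,\mathfrak{N}_{-(k+1)}\subseteq\mathfrak{N}_{-k}$ and intersect over all $k$. The only cosmetic difference is that the paper asserts the equality $T(\mathfrak{N}_{-k})=\mathfrak{N}_{-(k+1-1)}$ using invertibility, whereas you correctly note that the inclusion obtained from continuity alone already suffices.
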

\begin{proof}
 Note that $\mathfrak N_{-k}\supset\mathfrak N_{-(k+1)}$ for all $\;k\in\N$.  Continuity and invertibility of $T$ imply \[T(\mathfrak N_{-k})= \mathfrak N_{-k+1},\;{\rm for\;all}\;k\in\N.\]
 Consequently,
\[T\Big(\mathfrak N_{-\infty}\Big)\subset\mathfrak N_{-k}, \;{\rm for\;all}\;k\in\N.\]
Therefore, (\ref{4.1'}) follows.
\end{proof}

\begin{lemma}\label{4.2}
Suppose $T\in\mathfrak B(X)$ is an invertible transitive  operator and $0\neq x_0\in X$. If  $\mathfrak N\neq X$, then  $\{\mathfrak N_{-k}\}_{k=1}^\infty$ is a strictly decreasing sequence of closed subspaces with $\mathfrak N_{-\infty}=\{0\}$.
\end{lemma}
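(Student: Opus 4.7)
The plan is to leverage Lemma~\ref{4.1} together with the transitivity hypothesis to pin down $\mathfrak N_{-\infty}$, and then rule out equality $\mathfrak N_{-k}=\mathfrak N_{-(k+1)}$ by a bootstrap argument using invertibility of $T$.

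First I would identify $\mathfrak N_{-\infty}$. By Lemma~\ref{4.1}, $\mathfrak N_{-\infty}\in{\rm Lat}\,T$, so transitivity of $T$ forces $\mathfrak N_{-\infty}\in\{\{0\},X\}$. Since
\[
\mathfrak N_{-\infty}\subset\mathfrak N_{-1}=\mathfrak N\neq X
\]
by hypothesis, the only remaining option is $\mathfrak N_{-\infty}=\{0\}$. This gives the second assertion of the lemma.

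Next I would establish that the chain $\{\mathfrak N_{-k}\}$ is strictly decreasing. The key observation is that, since $T$ is a continuous bijection (with continuous inverse), it sends the closed linear span of a set to the closed linear span of its image; in particular
\[
T(\mathfrak N_{-k})=\mathfrak N_{-(k-1)}\;(k\geq 1)\quad\text{and}\quad T^{-1}(\mathfrak N_{-k})=\mathfrak N_{-(k+1)}\;(k\geq 1).
\]
Suppose, for contradiction, that $\mathfrak N_{-k_0}=\mathfrak N_{-(k_0+1)}$ for some $k_0\geq 1$. Applying $T^{-1}$ repeatedly and $T$ repeatedly (the latter at most $k_0-1$ times) shows that $\mathfrak N_{-j}=\mathfrak N_{-(j+1)}$ for every $j\geq 1$. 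Hence all the subspaces in the chain coincide with $\mathfrak N=\mathfrak N_{-1}$, and consequently
\[
\mathfrak N_{-\infty}=\bigcap_{k\geq 1}\mathfrak N_{-k}=\mathfrak N.
\]

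Finally I would derive the contradiction: the subspace $\mathfrak N$ contains $T^{-1}x_0$, which is nonzero because $T$ is invertible and $x_0\neq 0$. Therefore $\mathfrak N\neq\{0\}$, contradicting the conclusion $\mathfrak N_{-\infty}=\{0\}$ obtained in the first step. Hence $\mathfrak N_{-k}\supsetneq\mathfrak N_{-(k+1)}$ for every $k\geq 1$, completing the proof. There is no real obstacle here; the only point requiring care is the bookkeeping identity $T^{-1}(\mathfrak N_{-k})=\mathfrak N_{-(k+1)}$, which uses continuity of $T^{-1}$ to pass to closures of linear spans.
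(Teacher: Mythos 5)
Your proof is correct and follows essentially the same route as the paper: Lemma~\ref{4.1} plus transitivity forces $\mathfrak N_{-\infty}\in\{\{0\},X\}$, and $\mathfrak N_{-\infty}\subset\mathfrak N\neq X$ settles it, while strictness comes from the identity $T^{\pm1}(\mathfrak N_{-k})=\mathfrak N_{-(k\mp1)}$. The only cosmetic difference is at the end: the paper concludes directly that a stalled $\mathfrak N_{-k}$ is a nontrivial $T$-invariant subspace, whereas you propagate the equality along the whole chain and contradict $\mathfrak N_{-\infty}=\{0\}$ instead --- both hinge on the same observation.
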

\begin{proof} Since  $T\in\mathfrak B(X)$ is  transitive, $\mathfrak C(T)=X\setminus\{0\}$.  Therefore, $${\rm Lat}T=\{0, X\}.$$
By Lemma~\ref{4.1}, for every $x_0\in X$, $\cap_{k=1}^\infty \mathfrak{N}_{-k}\in{\rm Lat}T=\{0, X\}$. $\mathfrak N_{-\infty}(x)=\{0\}$ follows from $\mathfrak N_{-\infty}\subset \mathfrak N\neq X$.

Suppose that $\{\mathfrak N_{-k}\}$ is not strictly decreasing. Then $\mathfrak N_{-k}=\mathfrak N_{-k+1}$ for some $k\geq 1$. This entails that \[T\big(\mathfrak N_{-k}\big)=\mathfrak N_{-k+1}=\mathfrak N_{-k}.\]
 Since $x_0\neq0$, it follows that $\{0\}\neq\mathfrak N_{-k}\; (\subset\mathfrak N\neq X)$ is a nontrivial invariant subspace of $T$.
 \end{proof}
Recall that a sequence $(x_n,x^*_n)\subset X\times X^*$ is said to be a bi-orthogonal system provided $\langle x_i,x^*_j\rangle=\delta_{ij}$ for all $i,j\in\N$.
\begin{lemma}\label{4.3}
Suppose $T\in\mathfrak B(X)$ is an invertible transitive  operator and $0\neq x_0\in X$. If  $\mathfrak N\neq X$, then there is a sequence $\{x^*_n\}\subset X^*$ so that
$$\{(x_n,x^*_n)\}\subset X\times X^*$$ is a bi-orthogonal system, where $x_n=T^{-n}x_0$ for all $n\in\N$.
\end{lemma}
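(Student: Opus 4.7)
Via the Hahn-Banach theorem, the existence of the biorthogonal functionals is equivalent to the minimality of $\{x_n\}_{n\ge1}$: for each $k$, one must show $x_k\notin Y_k:=[x_n:\,n\ge 1,\,n\ne k]$, whereupon Hahn-Banach separates $x_k$ from the closed subspace $Y_k$ and produces $x_k^*\in X^*$ with $\langle x_k,x_k^*\rangle=1$ and $x_k^*\equiv 0$ on $Y_k$, giving the required biorthogonality. So I would concentrate entirely on proving minimality.

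\textbf{Proof of minimality.} Fix $k\ge 1$. Since $\mathrm{span}\{x_1,\ldots,x_{k-1}\}$ is finite-dimensional and $\mathfrak N_{-k-1}$ is closed, $Y_k=\mathrm{span}\{x_1,\ldots,x_{k-1}\}+\mathfrak N_{-k-1}$ is again closed. Suppose, for a contradiction, that $x_k=\sum_{i=1}^{k-1}a_i x_i+w$ with $w\in\mathfrak N_{-k-1}$. Applying the homeomorphism $T^k$ and noting that $T^k\mathfrak N_{-k-1}=[T^{k-n}x_0:\,n\ge k+1]=[T^{-m}x_0:\,m\ge 1]=\mathfrak N$, we arrive at the polynomial identity $p(T)x_0\in\mathfrak N$, where $p(t):=1-\sum_{i=1}^{k-1}a_i t^{k-i}$ satisfies $p(0)=1$ and $d:=\deg p\le k-1$. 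For each integer $m\ge 0$, put $\mathfrak M_m:=T^m\mathfrak N$. Because $T^m$ is a homeomorphism and a closed subspace plus a finite-dimensional one is closed, $\mathfrak M_m=\mathfrak N+\mathrm{span}\{x_0,Tx_0,\ldots,T^{m-1}x_0\}$ (with the empty span interpreted as $\{0\}$). Since the leading coefficient of $p$ is nonzero, isolating the top-degree term in $p(T)x_0\in\mathfrak N$ yields $T^d x_0\in\mathrm{span}\{x_0,\ldots,T^{d-1}x_0\}+\mathfrak N=\mathfrak M_d$. Consequently $\mathfrak M_{d+1}=\mathfrak M_d+\mathbb C T^d x_0=\mathfrak M_d$, and from $T\mathfrak M_d=\mathfrak M_{d+1}$ we get $T\mathfrak M_d=\mathfrak M_d$, so $\mathfrak M_d$ is $T$-invariant. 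But $\mathfrak M_d\supset\mathfrak N\ni x_1\ne 0$ and $\mathfrak M_d=T^d\mathfrak N\ne X$ (as $T^d$ is a bijection of $X$ and $\mathfrak N\ne X$), so $\mathfrak M_d$ is a nontrivial proper $T$-invariant subspace, contradicting the transitivity of $T$.

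\textbf{Main obstacle.} The decisive maneuver is the coupling of two observations: (a) a linear dependence among the $x_n$'s converts, after multiplication by $T^k$, into a polynomial identity $p(T)x_0\in\mathfrak N$ with $p(0)=1$; and (b) the ascending filtration $\mathfrak M_m=T^m\mathfrak N$ stabilizes at level $d=\deg p$ exactly when such an identity exists, thereby producing the $T$-invariant subspace forbidden by the transitivity hypothesis. Verifying the stabilization rigorously (and, in particular, handling the degenerate case $d=0$ uniformly) is the only technical point, but the computation $T^m\mathfrak N=\mathfrak N+\mathrm{span}\{x_0,\ldots,T^{m-1}x_0\}$ makes it routine.
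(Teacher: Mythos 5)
Your proof is correct. The underlying mechanism is the same as the paper's --- a linear dependence among the $x_n$ would force the chain $\{T^j\mathfrak N\}_{j\in\mathbb Z}$ to stabilize, yielding a nontrivial closed $T$-invariant subspace and contradicting transitivity --- but you run the chain in the opposite direction and package the argument differently. The paper first shows (Lemma \ref{4.2}, via transitivity) that the decreasing chain $\mathfrak N_{-k}=T^{1-k}\mathfrak N$ is \emph{strictly} decreasing with $\mathfrak N_{-\infty}=\{0\}$, uses this together with linear independence of $\{x_n\}$ (which it leaves as ``easy to observe'') to upgrade $\mathfrak N=[x_k]_{1\le k\le n}+\mathfrak N_{-(n+1)}$ to a direct sum, and then separates $x_n$ from the closed subspace $X_n$ by Hahn--Banach. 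You instead push a hypothetical dependence $x_k\in Y_k$ forward by $T^k$ into a polynomial identity $p(T)x_0\in\mathfrak N$ with $p(0)=1$, and show the increasing chain $\mathfrak M_m=T^m\mathfrak N$ stabilizes at $m=\deg p$, which is the forbidden invariant subspace. Your version is self-contained (it does not invoke Lemma \ref{4.2}, and minimality subsumes the linear-independence claim), and it makes the closedness of $Y_k$ and the degenerate case $p\equiv 1$ explicit; the paper's route has the mild advantage of producing the direct-sum decomposition of $\mathfrak N$ as a by-product. Both arguments are sound.
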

\begin{proof} Since $T$ is  invertible transitive, it is easy to observe that $\{x_n\}$ is a linearly independent set of $X$.  Note that for every $n\in\N$, $\mathfrak N_{-n}=[x_k]_{k\geq n}$, and that
 $[x_k]_{1\leq k\leq n}$ is a finite dimensional subspace of $X$. Then
\begin{equation}\label{4.4}
\mathfrak N=[x_k]_{k\geq 1}=[x_k]_{1\leq k\leq n}+[x_k]_{k\geq n+1}=[x_k]_{1\leq k\leq n}+\mathfrak N_{-n-1}(x).\end{equation}
Since $T$ is  invertible transitive with $\mathfrak N\neq X$, Lemma \ref{4.2} entails that $\big\{[x_k]_{k\geq n}\big\}_{n=1}^\infty$ is a strict decreasing sequence of closed subspaces with \begin{equation}\label{4.5}\mathfrak N_{-\infty}=\bigcap_{n=1}^\infty[x_k]_{k\geq n}=\{0\}.\end{equation}
Strict decreasing monotonicity of $\big\{[x_k]_{k\geq n}\big\}_{n=1}^\infty$ and linear independence of $\{x_n\}$  imply that $\big\{[x_k]_{1\leq k\leq n}\big\}$ is a strict increasing sequence and    turn (\ref{4.4}) into
\begin{equation}\label{4.6}
\mathfrak N=[x_k]_{1\leq k\leq n}\oplus[x_k]_{k\geq n+1},\;{\rm for\;all\;}n\in\N.\end{equation}
Therefore, \[x_n\notin[x_k]_{1\leq k\leq n-1}\oplus[x_k]_{k\geq n+1}\equiv X_n,\;{\rm for\;all\;}n\in\N.\]
Note that for each $n\in\N$, $X_n$ is a closed subspace of $X$. Then by separation theorem of convex sets there is $x^*_n\in X^*$ so that
\[\langle x^*_n,x_n\rangle=1,\;\langle x^*_n,x\rangle=0, \;{\rm for\;all\;}x\in X_n.\]
\end{proof}
\section{Four operators}

For an invertible transitive operator $T\in \mathfrak B(X)$, If $0\neq x_0\in X$, so that $\mathfrak N\equiv[T^{-k}x_0]_{k\geq1}\neq X$, then by Lemma \ref{4.3}, there is a bi-orthogonal system  $\{(x_n,x^*_n)\}\subset X\times X^*$ with $x_n=T^{-n}x_0$ for all $n\in\N$.  Please keep this in mind. The  assumption and  conclusion will play an important role in the proofs of Theorems \ref{1.4}--\ref{1.7}.

With the same assumption on the operator $T$, the vector $x_0$ and the space $\mathfrak N$ as above, we know that  the following two operators $U, V\in\frak{B}(\mathfrak N)$ are well-defined, where $\mathfrak N=[x_n]=[T^{-n}x_0]_{n\geq1}$.
\begin{equation}\label{4.7'}
U\equiv T^{-1}|_\mathfrak N,\;\; V=(T-x_0\otimes x_1^*)|_\mathfrak N.
\end{equation}
Next,  we  define a closed subspace $\mathfrak N^\dag$ as follows.
\begin{equation}\label{4.8'}
\mathfrak N^\dag=[x_n^*]_{n\geq1},\;\mbox{the  norm closure of span}\{x^*_n\}\;\mbox{in}\; \mathfrak N^*.
\end{equation}
Note $\mathfrak N^*=[x^*_n]^{w^*}$, i.e., the dual $\mathfrak N^*$ of $\mathfrak N$ is just the $w^*$-closure of $\mathfrak N^\dag$. We further define two operators $U^\dag, V^\dag: \mathfrak N^\dag\rightarrow \mathfrak N^*$ below.
\begin{equation}\label{4.9'}
U^\dag=U^*|_{\mathfrak N^\dag},\;\;V^\dag=V^*|_{\mathfrak N^\dag}.
\end{equation}
Then we have the following result.

\begin{lemma}\label{4.10'}
Suppose that $T\in \mathfrak B(X)$ is an invertible transitive operator, $0\neq x_0\in X$, and $\mathfrak N\neq X$. Let $U, V, U^\dag$ and $V^\dag$ be defined by (\ref{4.7'}) and (\ref{4.9'}). Then

 i)  $\mbox{ker}U=\{0\},\;\mbox{ran}U=[x_k]_{k\geq2}$; consequently, $\mbox{ind}U=-1$;

 ii) $\mbox{ker}V=[x_1],\;\mbox{ran}V=\mathfrak N$; consequently, $\mbox{ind}V=1$;

 iii) $VU=I_{\mathfrak N}$,\;$U^*V^*=I_{\mathfrak N^*}$;

 iv) $\mathfrak N^\dag$ is a $w^*$-dense subspace of $\mathfrak N^*$;

 v) $U^\dag, V^\dag\in\mathfrak B(\mathfrak N^\dag)$
 satisfy
 \[\mbox{ker}U^\dag=[x_1^*],\;\mbox{ran}U^\dag=\mathfrak N^\dag,\;\mbox{consequently,\;ind}U^*=ind U^\dag=1;\]
 \[\mbox{ker}V^\dag=\{0\},\;\mbox{ran}V^\dag=[x_n^*]_{n\geq2},\;\mbox{therefore,\;ind}V^*=ind V^\dag=-1;\]
 in particular,
 \[I_{{\mathfrak N}^\dag}=(U^*V^*)|_{{\mathfrak N}^\dag}=U^*|_{{\mathfrak N}^\dag}V^*|_{{\mathfrak N}^\dag};\]

 vi) $x_1=T^{-1}x_0$ is a cyclic vector of $U$; $x^*_1$ is a cyclic vector of $V^\dag$;

vii) $V\in B_1(\Omega_1)$ and $U^\dag\in B_1(\Omega_2)$, where $\Omega_1$ is the connected component of $\rho_{\text{s-F}}(V)$ containing the origin and $\Omega_2$ is the connected component of $\rho_{s-F}(U^\dag)$ containing the origin. Furthermore $\Omega_1,\Omega_2$ are maximal, and $\partial\Omega_1\subseteq \sigma_e(V)$, $\partial \Omega_2\in\sigma_e(U^\dag)$.
\end{lemma}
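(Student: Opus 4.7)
The plan is to carry out explicit computations on the ``basis-like'' sequences $\{x_n\}\subset\mathfrak N$ and $\{x_n^*\}\subset\mathfrak N^\dag$, exploiting the biorthogonality from Lemma~\ref{4.3} and the direct-sum decomposition $\mathfrak N=[x_k]_{1\le k\le n}\oplus[x_k]_{k\ge n+1}$ furnished by (\ref{4.6}). The first step is to evaluate the four operators on these sequences: $U=T^{-1}|_{\mathfrak N}$ gives $U(x_n)=x_{n+1}$, while $Tx_n=x_{n-1}$ for $n\ge 2$ and $Tx_1=x_0$ combined with $\langle x_1^*,x_n\rangle=\delta_{1n}$ give $V(x_1)=0$ and $V(x_n)=x_{n-1}$ for $n\ge 2$. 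Taking adjoints, one checks $V^*(x_n^*)=x_{n+1}^*$ and $U^*(x_n^*)=x_{n-1}^*$ for $n\ge 2$ with $U^*(x_1^*)=0$; in particular both $U^*$ and $V^*$ leave $\mathfrak N^\dag$ invariant, so $U^\dag,V^\dag\in\mathfrak B(\mathfrak N^\dag)$. Thus in each of $\mathfrak N$ and $\mathfrak N^\dag$, one of $\{U,V^\dag\}$ acts as a one-sided forward shift and one of $\{V,U^\dag\}$ as a one-sided backward shift.

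The identity $VU=I_{\mathfrak N}$ of (iii) follows by verifying it on $\mathrm{span}\{x_n\}$ and extending by continuity; adjoints yield $U^*V^*=I_{\mathfrak N^*}$. Consequently $V$ is surjective and $U$ injective, and the range computation $U(\mathfrak N)=[x_n]_{n\ge 2}$, of codimension $1$ by (\ref{4.6}) with $n=1$, delivers (i). For (ii), I would consider the idempotent $P=UV$: from $P^2=P$ one obtains the topological splitting $\mathfrak N=\ker V\oplus\mathrm{ran}\,P=\ker V\oplus[x_n]_{n\ge 2}$; comparing with $\mathfrak N=[x_1]\oplus[x_n]_{n\ge 2}$ and noting $[x_1]\subseteq\ker V$ forces $\ker V=[x_1]$, and the indices follow.

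For (iv), suppose $y\in\mathfrak N$ annihilates every $x_n^*$. Since each $x_k^*$ vanishes on $[x_j]_{j\ge k+1}$ by the construction in Lemma~\ref{4.3}, the bounded projection $P_n$ onto $[x_k]_{1\le k\le n}$ along $[x_k]_{k\ge n+1}$ satisfies $P_n y=\sum_{k=1}^n\langle x_k^*,y\rangle x_k=0$ for every $n$, so $y\in\bigcap_n\mathfrak N_{-n-1}=\mathfrak N_{-\infty}=\{0\}$ by Lemma~\ref{4.2}. Hence the annihilator of $\mathfrak N^\dag$ in $\mathfrak N$ is trivial, giving the $w^*$-density. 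Part (v) is then the verbatim dual of (i)--(iii): the explicit shift formulas deliver $U^\dag V^\dag=I_{\mathfrak N^\dag}$, $\ker V^\dag=\{0\}$ and $\ker U^\dag=[x_1^*]$, while the equality of indices for $U^*,V^*$ on $\mathfrak N^*$ with those of $U^\dag,V^\dag$ on $\mathfrak N^\dag$ is immediate from the shift descriptions.

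Part (vi) is immediate: $U^k x_1=x_{k+1}$ and $(V^\dag)^k x_1^*=x_{k+1}^*$ have dense spans. For (vii), the hypotheses of Lemma~\ref{3.13} are now met in both cases: for $V$ we have $VU=I_{\mathfrak N}$, $\dim\ker V=1$, and $[U^n x_1]_{n\ge 0}=\mathfrak N$, yielding $V\in B_1(\Omega_1)$; an identical application to the pair $(U^\dag,V^\dag)$ with generator $x_1^*$ yields $U^\dag\in B_1(\Omega_2)$. Maximality of $\Omega_1,\Omega_2$ and the inclusion of their boundaries in the essential spectra then come from Lemma~\ref{3.9}. The main obstacle, in my view, lies in (iv) and the companion identification $\ker V=[x_1]$ in (ii): both rest on $\mathfrak N_{-\infty}=\{0\}$, which is precisely where the transitivity hypothesis on $T$ enters through Lemma~\ref{4.2}.
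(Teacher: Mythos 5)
Your proof is correct and follows essentially the same route as the paper: explicit computation of the shift actions $Ux_n=x_{n+1}$, $Vx_n=x_{n-1}$ (with $Vx_1=0$) and their adjoints, the annihilator/biorthogonality argument for $w^*$-density of $\mathfrak N^\dag$, and Lemmas~\ref{3.13} and~\ref{3.9} for part vii). The only differences are that you supply details the paper leaves implicit -- the idempotent $UV$ to pin down $\ker V=[x_1]$, and the explicit reduction of the separation property in iv) to $\mathfrak N_{-\infty}=\{0\}$ via Lemma~\ref{4.2} -- both of which are sound.
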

\begin{proof}
i)-- iii)\; follow directly from (\ref{4.7'}) the definition of $U$ and $V$.

iv)\; Since $\{(x_n,x^*_n)\}\subset X^*$ is a total and complete bi-orthogonal system, i.e., $[x_n]=\mathfrak N$ and $\{x^*_n\}$ separates points of $\mathfrak N$, by separation theorem of convex sets in locally convex spaces, it follows.

v) \& vi)\; $\mbox{ker}U=\{0\}$ and $\mbox{ran}U=[x_k]_{k\geq2}$ in i) entail that $U^*\in\mathfrak B(\mathfrak{N}^*)$ is surjective with
\[\mbox{ker}U^*=(\mbox{ran}U)^\bot=[x_k]_{k\geq2}^\bot=[x^*_1].\]
\[\langle U^*x^*_i,x_j\rangle=\langle x^*_i,Ux_j\rangle=\langle x^*_i,x_{j+1}\rangle=\delta_{i,j+1},\;i,j\in\N\]
deduce that
\[U^*x_n^*=x^*_{n-1},\;\mbox{for all }n\geq2.\]
Therefore, $U^\dag=U^*|_{\mathfrak N^\dag}\in\mathfrak B(\mathfrak{N}^\dag)$ with $\mbox{ind}U^*=1$.

$\mbox{ker}V=[x_1]$ and $\mbox{ran}V=\mathfrak N$ in ii) imply that $V^*\in\mathfrak B(\mathfrak{N}^*)$ is injective with
\[\mbox{ran}V^*=(\mbox{ker}V)^\bot=[x_1]^\bot=[x^*_k]^{w^*}_{k\geq2}.\]
While $V^*x^*_n=x^*_{n+1}$ for all $n\in\N$ deduce that $V^\dag=V^*|_{\mathfrak N^\dag}\in\mathfrak B(\mathfrak{N}^\dag)$ with $\mbox{ind}V^*=-1$,
and further, that  $x_1^*$ is a cyclic vector of $V^\dag\in\mathfrak B(\mathfrak N^\dag)$. Clearly, $x_1$ is a cyclic vector of $U$. Therefore, v) and vi) have been shown.

{ vii) follows from ii), iii), v), vi), Lemma~\ref{3.10} and Lemma~\ref{3.13}.}
\end{proof}

\section{A lifting property}
In this section, we begin with the following simple property.
\begin{proposition}\label{4.7}
Let $T\in\mathfrak B(X)$. Then

i)  $T$ is invertible if and only if $T^*$ is invertible;

ii) $\sigma(T^*)=\sigma(T).$

iii) If $X$ is a complex Hilbert space, then the equality ii) should be replaced by
$\sigma(T^*)=\overline{\sigma(T)}\equiv\{\bar{\lambda}: \lambda\in\sigma(T)\}$.
\end{proposition}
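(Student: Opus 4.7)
The plan is to verify each item using the standard duality between an operator and its Banach-space adjoint, together with the open mapping theorem.

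For part (i), the forward direction is routine: if $T$ is invertible with inverse $T^{-1}$, then applying the adjoint functor to $TT^{-1} = T^{-1}T = I$ and using $(AB)^* = B^*A^*$ and $I^* = I$ shows that $T^*$ is invertible with $(T^*)^{-1} = (T^{-1})^*$. For the converse, I would avoid the temptation to appeal to $T^{**}$ directly (which lands in $\mathfrak B(X^{**})$ rather than $\mathfrak B(X)$) and instead use the two classical annihilator identities
\[
\overline{\mathrm{ran}\, T} = {}^{\perp}\!\ker T^*, \qquad \ker T = (\mathrm{ran}\, T^*)^{\perp}.
\]
From these one reads off: $T^*$ surjective $\iff$ $T$ is bounded below (injective with closed range); $T^*$ injective $\iff$ $T$ has dense range. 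Combining the two, $T^*$ invertible forces $T$ to be both bounded below and to have dense range; the dense range combined with closed range gives surjectivity, so $T$ is bijective, and the open mapping theorem promotes bijectivity to invertibility in $\mathfrak B(X)$.

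For part (ii), I would simply apply part (i) with $T$ replaced by $T - \lambda I$. Since $(T-\lambda I)^* = T^* - \lambda I$ in the Banach-space sense, invertibility of $T - \lambda I$ is equivalent to invertibility of $T^* - \lambda I$, so $\rho(T) = \rho(T^*)$ and consequently $\sigma(T) = \sigma(T^*)$.

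For part (iii), I would replace the Banach-space adjoint by the Hilbert-space adjoint and observe that the duality pairing is now conjugate-linear in the second variable, so $(T - \lambda I)^* = T^* - \overline{\lambda} I$. Running the argument of part (ii) with this corrected formula gives the equivalence $\lambda \in \rho(T) \iff \overline{\lambda} \in \rho(T^*)$, i.e., $\sigma(T^*) = \overline{\sigma(T)}$. I do not anticipate any real obstacle here; the only care-point is the converse in (i), where one must avoid confusing the Banach adjoint $T^*\in\mathfrak B(X^*)$ with a would-be inverse living in $\mathfrak B(X)$ and instead recover invertibility of $T$ via the annihilator characterization plus the open mapping theorem.
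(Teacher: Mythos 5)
Your proposal is correct and follows essentially the same route as the paper: the paper's proof of (i) simply asserts that invertibility of $T$ is equivalent to bijectivity of $T$, which in turn is equivalent to bijectivity of $T^*$, and then derives (ii) from (i); you merely fill in the annihilator/open-mapping details that the paper leaves implicit. The extra care you take with the converse direction of (i) and with the conjugate-linearity in (iii) is sound but does not constitute a different method.
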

\begin{proof}
i) It suffices to note  $T\in\mathfrak B(X)$ is invertible if and only if $T$ is bijective, and which is equivalent to $T^*: X^*\rightarrow X^*$ is bijective.

ii) follows from i) and the definition of the conjugate operator $T^*$ of $T$.
\end{proof}

We need also the following ``lefting property" related to $w^*$-to-$w^*$ continuous operators defined a subspace of the dual $X^*$ of $X$.
\begin{theorem}\label{4.8}
 Suppose that $Z^\dag\subset Z^*$ is { a separable Banach space which is} $w^*$- dense subspace of $Z^*$, and $A^\dag\in\mathfrak B(Z^\dag)$ is $w^*$-continuous. Then

i)\; $A^\dag$ has a $w^*$-continuous extension $A^*\in \mathfrak B(Z^*)$;

ii) \; { If $A^\dag\in\mathfrak B(Z^\dag)$ is invertible, then $A^*\in \mathfrak B(Z^*)$ is also invertible. Furthermore, if $A^\dag\in B_1(\Omega)$, then $A^\dag\in\mathfrak B(Z^\dag)$ is invertible if and only if $A^*\in \mathfrak B(Z^*)$ is invertible.} Consequently,

iii) { \[\sigma(A^\dag)\supseteq \sigma(A^*),\;{\rm and}\;\;\rho(A^\dag)\subseteq \rho(A^*);\] Furthermore,  if $A^\dag\in B_1(\Omega)$, then \[\sigma(A^\dag)= \sigma(A^*), {\rm and}\; \;\rho(A^\dag)=\rho(A^*).\]}
\end{theorem}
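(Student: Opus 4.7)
The overall strategy is to construct a pre-adjoint $A_* \in \mathfrak{B}(Z)$ with $A^\dag = (A_*)^*|_{Z^\dag}$, so that invertibility statements about $A^\dag$ and $A^*$ both reduce to invertibility of $A_*$, and to exploit the Cowen--Douglas structure for the converse of (ii). For part (i), fix $z \in Z$ and consider the bounded $w^*$-continuous linear functional $L_z(z^\dag) := \langle A^\dag z^\dag, z \rangle$ on $Z^\dag$. By $w^*$-density of $Z^\dag$ in $Z^*$ together with the identification of $w^*$-continuous linear functionals on $Z^*$ with elements of $Z$, there is a unique $A_*z \in Z$ with $L_z(z^\dag) = \langle z^\dag, A_*z\rangle$ for all $z^\dag \in Z^\dag$; linearity is clear, and the closed graph theorem supplies boundedness of $A_*$. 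Then $A^* := (A_*)^* \in \mathfrak{B}(Z^*)$ is $w^*$-continuous by construction, and a direct computation shows $A^*|_{Z^\dag} = A^\dag$.

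For the forward direction of (ii), $A^* \in \mathfrak{B}(Z^*)$ is invertible if and only if its pre-adjoint $A_* \in \mathfrak{B}(Z)$ is invertible, so it suffices to establish invertibility of $A_*$ when $A^\dag$ is invertible. Injectivity of $A_*$ is immediate: $A_*z = 0$ forces $\langle A^\dag z^\dag, z\rangle = 0$ for every $z^\dag \in Z^\dag$, surjectivity of $A^\dag$ reduces this to $\langle z^\dag, z\rangle = 0$ on $Z^\dag$, and $w^*$-density forces $z = 0$. For surjectivity, the idea is to show that $(A^\dag)^{-1}$ is itself $w^*$-continuous --- using a Banach--Alaoglu argument on bounded nets together with a careful uniqueness analysis of $w^*$-limit points --- and then to apply part (i) to $(A^\dag)^{-1}$ to produce a pre-adjoint $B_* \in \mathfrak{B}(Z)$; the identities $A^\dag (A^\dag)^{-1} = (A^\dag)^{-1} A^\dag = I$ on $Z^\dag$ then lift through $w^*$-density to $A_* B_* = B_* A_* = I$ on $Z$.

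For the converse direction of (ii) under the hypothesis $A^\dag \in B_1(\Omega)$, the argument is much cleaner. Assume $A^*$ is invertible on $Z^*$; then $A^*$ is bounded below, so $\|A^* z^*\| \geq c \|z^*\|$ for some $c>0$. Restricting to $Z^\dag$ (which carries the subspace norm inherited from $Z^*$), $A^\dag$ is bounded below, hence has closed range in $Z^\dag$. By Lemma~\ref{3.8}(i), $\operatorname{ran}(A^\dag)$ is dense in $Z^\dag$, so dense and closed together yield $\operatorname{ran}(A^\dag) = Z^\dag$, i.e., $A^\dag$ is surjective. Combined with injectivity of $A^\dag$ (inherited from injectivity of $A^*$), $A^\dag$ is bijective, hence invertible by the open mapping theorem. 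Part (iii) now follows by applying both implications of (ii) to $A^\dag - \lambda$ for each $\lambda \in \mathbb{C}$, after observing that $A^\dag - \lambda \in B_1(\Omega - \lambda)$ whenever $A^\dag \in B_1(\Omega)$, with pre-adjoint $A_* - \lambda$ and extension $A^* - \lambda$.

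The principal obstacle is the forward direction of (ii), specifically establishing the $w^*$-continuity of $(A^\dag)^{-1}$: in general a $w^*$-continuous bijection need not have a $w^*$-continuous inverse, so the argument must leverage the separability of $Z^\dag$ (hence $w^*$-metrizability of the topology on bounded sets) together with injectivity of $A^*$ (itself requiring an independent argument) to rule out extraneous $w^*$-limit points of $(A^\dag)^{-1}$ applied to bounded nets. The converse direction of (ii), in contrast, proceeds directly from the fact that ``bounded below'' is inherited under restriction, combined with the dense-range property of Lemma~\ref{3.8}(i).
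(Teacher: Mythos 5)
Your proposal follows essentially the same route as the paper: the hard forward direction of (ii) rests, in both cases, on showing that $(A^\dag)^{-1}$ is (boundedly) $w^*$-continuous via Banach--Alaoglu, separability/metrizability of the $w^*$-topology on bounded sets, and uniqueness of $w^*$-cluster points, while the converse direction under $A^\dag\in B_1(\Omega)$ is word-for-word the paper's bounded-below-plus-dense-range argument using Lemma~\ref{3.8}, and (iii) is the same specialization to $A^\dag-\lambda$. Your pre-adjoint formulation ($A_*$, $B_*$ acting on $Z$) is just the standard reformulation of the paper's direct extension of $(A^\dag)^{-1}$ to an operator $B^*$ on $Z^*$ with $A^*B^*=I_{Z^*}$ via Krein--\u{S}mulian, and the delicate point you explicitly flag---that uniqueness of the cluster point requires injectivity of $A^*$, equivalently that the limit lands back in $Z^\dag$---is precisely the step the paper's own proof passes over without comment.
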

\begin{proof} It suffices to show  ii) and iii).

ii)\;Since $A^\dag\equiv A^*|_{Z^\dag}\in\mathfrak B(Z^\dag)$ is invertible, $(A^\dag)^{-1}: Z^\dag\rightarrow Z^\dag$ is an isomorphism.
We first claim that $(A^\dag)^{-1}$ is also a b-$w^*$-isomorphism of $Z^\dag$, i.e., it is   continuous with respect to the bounded weak-star topology of $Z^*$. Since $Z^\dag$ is separable,
the b-$w^*$ topology is metrizable on each bounded subset of $Z^\dag$. Consequently, the $w^*$-convergence on bounded sets is equivalent to $w^*$-sequential convergence.
Let $\{x^*_n\}\subset Z^\dag $ be a (bounded) sequence $w^*$-converging to $x^*\in Z^\dag$. Then invertibility of $A^\dag$ on $Z^\dag$ implies that $\{y^*_n\}$ is also
 bounded in $Z^\dag$, where $y^*_n=A^\dag x^*_n$. The Banach-Alaoglu theorem says that there is a subsequence of $\{y^*_n\}$ (again denoted by $\{y^*_n\}$)
$w^*$-converging to some $y^*\in Z^*$. We want to show that $y^*=A^\dag x^*$. Since $A^\dag$ is $w^*$-continuous, we obtain $x^*_n=A^\dag y^*_n$ $w^*$-converges to $A^\dag y^*$.
Therefore, $A^\dag y^*=x^*$. Consequently,  $(A^\dag)^{-1}x^*=y^*\in Z^\dag$.

 We will show that $(A^\dag)^{-1}$ can be b-$w^*$-continuously extended to $Z^*$. It is equivalent to that for each $z^*\in Z^*$, there exists $x^*\in Z^*$ such that for every bounded  sequence $\{z^*_n\}$ in $Z^\dag$ which is $w^*$-converging to $z^*\in Z^*$, we have $w^*$-$\lim_n(A^\dag)^{-1}z^*_n=x^*$. Indeed, suppose that $\{u^*_n\}$ and $\{v^*_n\}$ are two bounded sequences in $Z^\dag$ so that both of them are $w^*$-converging to $z^*\in Z^*$. Then $\{w_n^*\}$ \;($w^*_n=u^*_n-v^*_n$)\;is $w^*$-converging to $0$. Therefore, $w^*$-continuity of $(A^\dag)^{-1}$ on $Z^\dag$ entails that  $(A^\dag)^{-1}w^*_n$ is again $w^*$-converging to $0$. This says that the two sequences $\{(A^\dag)^{-1}u_{n}^*\}$ and $\{(A^\dag)^{-1}v_{n}^*\}$ have  same $w^*$-cluster points. Relative $w^*$-compactness of them deduces that $w^*$-$\lim_n(A^\dag)^{-1}u^*_n=w^*\text{-}\lim_n(A^\dag)^{-1}v^*_n$. Therefore, we have shown that $(A^\dag)^{-1}$ can be b-$w^*$-continuously extended to the b-$w^*$ completion of $Z^\dag$. By the Krein-\u{S}mulian theorem, the b-$w^*$ completion of $Z^\dag$ is just $Z^*$ and both the b-$w^*$ continuity and the $w^*$-continuity coincide on $Z^*$ for all linear operators in $\mathfrak B(Z^*)$. We denote the $w^*$-continuous extension of $A^\dag$ by $B^*$. For completion of the proof of v), it remains to show that $A^*B^*=I_{Z^*}$.
 Given $z^*\in Z^*$, let $\{z_n^*\}\subset Z^\dag$ be a sequence $w^*$-converging to $z^*$. Then $w^*$-continuity of both $A^*$ and $B^*$ entails that
 \[(A^*B^*)z^*=w^*\text{-}\lim_n(A^*B^*)z^*_n=w^*\text{-}\lim_n(A^*B^*)|_{Z^\dag}z^*_n=w^*\text{-}\lim_nz^*_n=z^*.\]
 Therefore, $A^*B^*=I_{Z^*}$ and (1) has been shown.

{Suppose that $A^\dag\in B_1(\Omega)$ and that $A^*\in \mathfrak B(Z^*)$ is invertible. Then there is  $K>0$ such that $\|A^*x\|\geq K\|x\|$ for all $x\in Z^*$. In particular, $\|A^\dag x\|\geq K\|x\|$ for all $x\in Z^\dag$. Thus, ${\rm ran} A^\dag$ is closed. By Lemma~\ref{3.8}, ${\rm ran} A^\dag=Z^\dag$ and $A^\dag$ is invertible.}

{iii) By ii) we have just proven,  if $A^\dag-\lambda\in\mathfrak B(Z^\dag) $ is invertible on $Z^\dag$, then  its natural extension $A^*-\lambda\in \mathfrak B(Z^*)$ is invertible. Equivalently, if
 $\lambda\in\rho(A^\dag)$, then $\lambda\in\rho(A^*)$. Consequently, $\rho(A^\dag)\subseteq \rho(A^*)$.} Suppose $A^\dag\in B_1(\Omega)$. By ii), $A^\dag-\lambda\in\mathfrak B(Z^\dag) $ is invertible on $Z^\dag$ if and only if  its natural extension $A^*-\lambda\in \mathfrak B(Z^*)$ is invertible. Equivalently, if
 $\lambda\in\rho(A^\dag)$, then $\lambda\in\rho(A^*)$. Consequently, $\rho(A^\dag)=\rho(A^*)$.

\end{proof}
\section{Spectrum properties}

\begin{lemma} \label{4.9}
Suppose that $T\in\mathfrak B(X)$ is invertible transitive. Let the spaces $\mathfrak N$, $\mathfrak N^\dag$, and the  operators $U,V\in\mathfrak B(\mathfrak N)$ and $U^\dag, V^\dag\in\mathfrak B(\mathfrak N^\dag)$ be defined in Lemma \ref{4.10'}. Then
\begin{enumerate}
\item  $\sigma(U^\dag)=\sigma(U^*)\;\;\;{\rm and }\;\rho(U^\dag)=\rho(U^*)$;
\item  $\sigma(V^\dag)\supseteq\sigma(V^*)\;\;\;{\rm and }\;\rho(V^\dag)\subseteq\rho(V^*)$;
\item  $\sigma_e(U^\dag)=\sigma_e(U^*).$
\end{enumerate}
\end{lemma}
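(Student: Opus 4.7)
I plan to treat the three assertions separately, with (1) a direct corollary of Theorem~\ref{4.8}, (2) the always-valid half of Theorem~\ref{4.8}(iii), and (3) requiring a genuine Fredholm-theoretic argument on top.

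For (1), the forward inclusion $\rho(U^\dag) \subseteq \rho(U^*)$ is automatic from Theorem~\ref{4.8}(iii). For the reverse, I would fix $\lambda \in \rho(U^*)$ and invoke Lemma~\ref{4.10'}(vii), which tells us $U^\dag \in B_1(\Omega_2)$. Theorem~\ref{4.8}(ii) then guarantees that $U^\dag - \lambda$ is bounded below on $\mathfrak N^\dag$, so in particular its range is closed. Simultaneously, Lemma~\ref{3.8}(i), applied to the $B_1$-operator $U^\dag$, forces $\operatorname{ran}(U^\dag - \lambda)$ to be dense in $\mathfrak N^\dag$ for \emph{every} $\lambda \in \mathbb{C}$. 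Closed plus dense yields surjective, and bounded below yields injective, so $U^\dag - \lambda$ is invertible. Hence $\rho(U^*) \subseteq \rho(U^\dag)$, which completes (1).

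For (2), no $B_1$-structure is available for $V^\dag$ (its index is $-1$, and the cyclic vector delivered by Lemma~\ref{4.10'}(vi) sits in $\ker U^\dag$, powering $B_1$ for the companion operator $U^\dag$ rather than $V^\dag$). Hence only the universally valid half of Theorem~\ref{4.8}(iii) is applicable, giving $\sigma(V^\dag) \supseteq \sigma(V^*)$ and $\rho(V^\dag) \subseteq \rho(V^*)$ directly, with no hope of equality from the tools to hand.

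For (3), the essential-spectrum equality is the main obstacle and I would prove the two inclusions separately. The direct computation $U^* x_n^* = x_{n-1}^*$ for $n\ge 2$ and $U^* x_1^* = 0$ shows $\mathfrak N^\dag$ is $U^*$-invariant, yielding a short exact sequence $0 \to \mathfrak N^\dag \to \mathfrak N^* \to \mathfrak N^*/\mathfrak N^\dag \to 0$ preserved by $U^*$; standard Fredholm-index additivity across this sequence relates the Fredholm-ness of $U^* - \lambda$ and $U^\dag - \lambda$, modulo control of the induced quotient operator. In the reverse direction, assuming $U^\dag - \lambda$ is Fredholm on $\mathfrak N^\dag$, Lemma~\ref{3.8}(i) and closed-range force it to be surjective with finite-dimensional kernel. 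I would then lift surjectivity to $U^* - \lambda$ by a $w^*$-compactness argument: for $y^* \in \mathfrak N^*$, pick a bounded net $y^*_\alpha \in \mathfrak N^\dag$ with $y^*_\alpha \xrightarrow{w^*} y^*$, use the open mapping theorem on $U^\dag - \lambda$ to produce bounded preimages $x^*_\alpha$, extract a $w^*$-convergent subnet, and apply $w^*$-continuity of $U^*$ to conclude $(U^* - \lambda)x^* = y^*$. The companion identity $V^*U^* = I - E$, where $E$ is the rank-one operator $y^*\mapsto \langle y^*, x_1\rangle x_1^*$, will be used to control $\dim\ker(U^* - \lambda)$ and thereby upgrade surjectivity to Fredholm-ness. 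The hard part will be exactly this transfer: ensuring that the $w^*$-compactness step does not manufacture an infinite-dimensional kernel of $U^* - \lambda$ outside of $\mathfrak N^\dag$, and dually that the quotient $\mathfrak N^*/\mathfrak N^\dag$ does not obstruct Fredholm additivity; both rely on the interplay among the $B_1$-structure of $U^\dag$, the factorization $V^*U^* = I - E$, and the $w^*$-density of $\mathfrak N^\dag$ in $\mathfrak N^*$.
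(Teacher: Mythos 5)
Your plan follows the paper's route for everything that the paper actually proves. For (1) you unfold the argument that the paper compresses into ``follows from Lemma \ref{4.10'} and Theorem \ref{4.8}'': since $U^\dag\in B_1(\Omega_2)$, invertibility of $U^*-\lambda$ forces $U^\dag-\lambda$ to be bounded below, and dense range (Lemma \ref{3.8}) plus closed range gives surjectivity; for (2) you correctly observe that only the one-sided half of Theorem \ref{4.8}(iii) is available for $V^\dag$. For (3), your ``reverse direction'' is exactly the paper's proof: approximate $z^*\in\mathfrak N^*$ by a bounded sequence (the paper can use sequences since $\mathfrak N^\dag$ is separable, so your nets are overkill but harmless) in $\mathfrak N^\dag$, lift through the isomorphism $\mathfrak N^\dag/\ker(U^\dag-\lambda)\to\mathfrak N^\dag$ to get bounded preimages, extract a $w^*$-convergent subsequence by Banach--Alaoglu, and pass to the limit by $w^*$-continuity; the bound $\dim\ker(U^*-\lambda)\le 1$ comes from Proposition \ref{3.11} applied to the cyclic operator $U$, not from your factorization $V^*U^*=I-E$ (which is correct but unused in the paper). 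The one place you genuinely diverge is your ``forward direction'' of (3) via a short exact sequence and Fredholm index additivity across $0\to\mathfrak N^\dag\to\mathfrak N^*\to\mathfrak N^*/\mathfrak N^\dag\to 0$: the paper does not do this at all --- it asserts that the single surjectivity-transfer implication suffices, on the grounds that both $U^\dag-\lambda$ and $U^*-\lambda$ have kernels of dimension at most one and $U^\dag-\lambda$ always has dense range, so Fredholmness reduces to surjectivity. Your index-additivity sketch is the least developed part of your proposal (you yourself flag the quotient operator and the possible failure of Fredholm additivity as ``the hard part''), and since $\mathfrak N^\dag$ is only $w^*$-dense, not norm-dense, in $\mathfrak N^*$, the quotient $\mathfrak N^*/\mathfrak N^\dag$ is genuinely infinite-dimensional and that route would require real work; you would do better to either adopt the paper's reduction or justify directly why $U^*-\lambda$ Fredholm forces $U^\dag-\lambda$ Fredholm.
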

\begin{proof}
{ (1) and (2) follow directly from Lemma \ref{4.10'} and Theorem \ref{4.8}. By Lemma~\ref{3.9} and Lemma \ref{4.10'}, $U^\dag\in B_1(\Omega_2)$, $\Omega_2$ is maximal and $$\partial \Omega_2\subseteq \sigma_e(U^\dag).$$  By Lemma~\ref{4.10'}, $x_1=T^{-1}x_0$ is a cyclic vector of $U$. By Proposition~\ref{3.11}, ${\rm dimker} (U^*-\lambda)\leq 1$ for all $\lambda$ and ${\rm dimker} (U^\dag-\lambda)\leq 1$ for all $\lambda$.

By Lemma~\ref{3.3},  for any $\lambda\in\mathbb C$, $U^\dag-\lambda$ has dense range.  To prove (3), it suffices to show that surjectivity of $U^\dag-\lambda$ on $Z^\dag$ implies surjectivity of $U^*-\lambda$ on $Z^*$.  Let $z^*\in Z^*$, and  $\{z_n^*\}\subset Z^\dag$ be a sequence such that
\[
z^*=w^*\text{-}lim_n z_n^*.
\]
Let $Y^\dag=\ker(U^\dag-\lambda)\subset Z^\dag $. Define $Q:\, Z^\dag/Y^\dag\rightarrow Z^\dag$ by
\[
Q(y^*+Y^\dag)=(U^\dag-\lambda)y^*, \;\; y^*\in Z^*.
\]
Then $Q$ is an isomorphism from $Z^\dag/Y^\dag$ onto $Z^\dag$. Therefore, for each $z_n^*$, there exists a $y_n^*\in Z^\dag$ such that
\[
Q(y_n^*+Y^\dag)=(U^\dag-\lambda)y_n^*=z_n^*.
\]
Since, $\{y_n^*+Y^\dag\}$ is a bounded sequence in $Z^\dag/Y^\dag$, we may assume that $\{y^*_n\}$ is  bounded in $Z^*$. By the Banach-Alaoglu theorem, there is a  subsequence of $\{y^*_n\}$ (again denoted by $\{y^*_n\}$)  $w^*$-convergent to some point $y^*\in Z^*$.
Then
\[
z^*=w^*\text{-}\lim_n z_n^*=w^*\text{-}\lim_n (U^\dag-\lambda)y_n^*=w^*\text{-}\lim_n(U^*-\lambda)y_n^*=(U^*-\lambda)y^*.
\]
Therefore, $U^*-\lambda$ is surjective.
}
\end{proof}

\begin{lemma}\label{4.10}
Suppose that $T\in \mathfrak B(X)$ is  invertible transitive, $0\neq x_0\in X$, and that the spaces $\mathfrak N$, $\mathfrak N^\dag$, and the  operators $U,V\in\mathfrak B(\mathfrak N)$ and $U^\dag, V^\dag\in\mathfrak B(\mathfrak N^\dag)$ be defined in Lemma \ref{4.10'}.
Assume that  $\Omega_0$ is the connected component of $\mathbb C\setminus \sigma(T^{-1})$ containing the origin.
 Then

 i)\; For every $\lambda\in\Omega_0$, $U-\lambda$ is a Fredholm operator with $$\text{dimker}(U-\lambda)=0, \;\;{\rm and\;}\;\text{co-dim}(U-\lambda)=1.$$ Consequently,  \[\text{ind}(U-\lambda)\equiv \text{dimker}(U-\lambda)-\text{co-dim}(U-\lambda)=-1.\]

 ii)\; $V\in B_1(\Omega_V)$, where $\Omega_V=\left\{\lambda:\,\frac{1}{\lambda}\in\mathbb C\setminus \sigma(U)^\land \bigcup\{\infty\}\right\}$ and $\partial \Omega_V\subseteq \sigma_e(V)$.
\end{lemma}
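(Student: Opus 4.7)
The plan is to treat (i) and (ii) in turn, exploiting the factorizations relating $U$, $V$, and $T^{-1}$ that are assembled in Lemma \ref{4.10'}, together with Lemmas \ref{2.8} and \ref{4.9}.

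For (i), note that $\Omega_0\subseteq\rho(T^{-1})$, so for each $\lambda\in\Omega_0$ the operator $T^{-1}-\lambda$ is invertible on $X$ and hence injective on the invariant subspace $\mathfrak N$, giving $\ker(U-\lambda)=\{0\}$. Applying Lemma \ref{2.8} to $S=T^{-1}$ and $E=\mathfrak N$ yields $\rho(T^{-1})\subseteq\rho_F(U)$, so $U-\lambda$ is Fredholm on $\Omega_0$. At $\lambda=0$, Lemma \ref{4.10'}(i) gives $\mathrm{ind}(U)=-1$; since the Fredholm index is locally constant and $\Omega_0$ is connected, $\mathrm{ind}(U-\lambda)=-1$ throughout $\Omega_0$. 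Combined with the vanishing of the kernel this forces the codimension of the range to equal $1$.

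For the first half of (ii), the key identity is $V-\lambda=V(I-\lambda U)$, immediate from $VU=I_{\mathfrak N}$ in Lemma \ref{4.10'}(iii). For $\lambda\in\Omega_V$ with $\lambda\neq 0$, the definition of $\Omega_V$ places $1/\lambda$ in the unbounded component of $\rho(U)$, so $U-1/\lambda$ is invertible and hence so is $I-\lambda U=-\lambda(U-1/\lambda)$. Using Lemma \ref{4.10'}(ii), which states $\ker V=[x_1]$ and $\mathrm{ran}\,V=\mathfrak N$, one reads from this factorization that $\ker(V-\lambda)$ is the one-dimensional space spanned by $e(\lambda):=(I-\lambda U)^{-1}x_1$, while $\mathrm{ran}(V-\lambda)=V\mathfrak N=\mathfrak N$; the case $\lambda=0$ is Lemma \ref{4.10'}(ii) directly. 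This verifies three of the four defining properties of $B_1(\Omega_V)$. The remaining completeness condition $[e(\lambda):\lambda\in\Omega_V]=\mathfrak N$ is proved by expanding $e(\lambda)=\sum_{n\ge 0}\lambda^n U^n x_1=\sum_{n\ge 0}\lambda^n x_{n+1}$ for $\lambda$ near $0$: if $x^*\in\mathfrak N^*$ annihilates every $e(\lambda)$, Taylor coefficients force $\langle x^*,x_{n+1}\rangle=0$ for all $n\ge 0$, and cyclicity of $x_1$ for $U$ (Lemma \ref{4.10'}(vi)) gives $x^*=0$---this is the same holomorphic-frame argument used in the proof of Lemma \ref{3.3}.

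For the second half of (ii), let $\lambda_0\in\partial\Omega_V$. Since $0$ is interior to $\Omega_V$ one has $\lambda_0\neq 0$, and $1/\lambda_0$ lies on the boundary of the unbounded component of $\rho(U)$, hence in $\sigma(U)$. The crucial sub-claim is that in fact $1/\lambda_0\in\sigma_e(U)$. Suppose to the contrary that $1/\lambda_0\in\rho_F(U)$; then $U-\mu$ is Fredholm in a neighborhood of $1/\lambda_0$, and on the side of this neighborhood contained in the unbounded component of $\rho(U)$ the operator $U-\mu$ is invertible and so has index $0$. Continuity of the index then gives $\mathrm{ind}(U-1/\lambda_0)=0$, and combined with $1/\lambda_0\in\sigma(U)$ this forces $\ker(U-1/\lambda_0)\neq\{0\}$, i.e.\ $1/\lambda_0\in\sigma_p(U)$. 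However, any eigenvector $\xi\in\mathfrak N$ of $U=T^{-1}|_{\mathfrak N}$ necessarily has nonzero eigenvalue $\mu$ (since $T^{-1}$ is invertible), and then $T\xi=\mu^{-1}\xi$ contradicts $\sigma_p(T)=\emptyset$---a consequence of the transitivity of $T$. Hence $1/\lambda_0\in\sigma_e(U)$. Finally, applying Atkinson's theorem to $V-\lambda_0=-\lambda_0 V(U-1/\lambda_0)$ with $V$ Fredholm (so $\pi(V)$ is invertible in the Calkin algebra), $V-\lambda_0$ is Fredholm iff $U-1/\lambda_0$ is Fredholm; as the latter fails, $\lambda_0\in\sigma_e(V)$.

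The anticipated main obstacle is precisely this identification of the outer boundary of $\sigma(U)$ with part of $\sigma_e(U)$: for a generic Banach space operator, boundary points of the spectrum can be isolated eigenvalues of finite multiplicity and so escape the essential spectrum; it is exactly the transitivity hypothesis on $T$ (via $\sigma_p(T)=\emptyset$) that excludes this possibility.
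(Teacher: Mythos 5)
Your proof is correct. Part (i) and the verification that $V\in B_1(\Omega_V)$ are in substance the paper's own argument: the paper likewise combines Lemma \ref{2.8} with local constancy of the Fredholm index on the connected set $\Omega_0$, and its local eigenvector field $S_\lambda e_0=\sum_{n\ge0}\lambda^n U^n e_0$ is exactly your $e(\lambda)=(I-\lambda U)^{-1}x_1$, with the same Taylor--coefficient completeness argument; your factorization $V-\lambda=V(I-\lambda U)$ merely makes the passage from a small disk to all of $\Omega_V$ explicit, where the paper appeals somewhat loosely to ``uniqueness of the extension of the holomorphic map.'' The genuine divergence is the boundary claim $\partial\Omega_V\subseteq\sigma_e(V)$. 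The paper asserts without justification that $\Omega_V$ is a maximal domain for $V$ and then invokes Lemma \ref{3.9}, whose proof rests on a Zorn's-lemma maximality construction. You argue pointwise instead: a boundary point $\lambda_0$ of $\Omega_V$ corresponds to a point $1/\lambda_0$ on the outer boundary of $\sigma(U)$; if $U-1/\lambda_0$ were Fredholm, its index would be $0$ by continuity from the adjacent unbounded component of $\rho(U)$, and $\sigma_p(U)=\emptyset$ (this is precisely where transitivity of $T$ enters) would then force invertibility, a contradiction; Atkinson's theorem applied to $\pi(V-\lambda_0)=-\lambda_0\,\pi(V)\,\pi\!\left(U-\tfrac{1}{\lambda_0}\right)$ transfers the conclusion to $V$. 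Your route is self-contained and in fact closes the gap the paper leaves at the unproved word ``maximal''; the paper's route is shorter if one grants that claim. Both arguments are valid.
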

\begin{proof} i)  By Lemma \ref{4.10'}, $\mbox{dim ker}U=0$, $\mbox{co-dim ran}U=1$, and  ind $U=-1$.
Since $T\in\mathfrak B(X)$ is invertible transitive, for every $\lambda\in\Omega_0$, $T^{-1}-\lambda$ is again invertible.
Therefore, $U-\lambda=(T^{-1}-\lambda)|_{\mathfrak N}$ is injective, and consequently, $\mbox{dim ker}(U-\lambda)=0$. Since $\Omega_0$ is the connected component of $\mathbb C\setminus \sigma(T^{-1})$ containing the origin, and since ind $U=-1$, continuity of the index entails ind$(U-\lambda)=-1$ and $\mbox{co-dim ran}(U-\lambda)=1$.

ii)
 By Lemma \ref{4.10'}, $\mbox{dimker}V=1$, $\mbox{co-dimran}U=0$, ind$V=1$ and  $VU=I$.
Write $e_0=T^{-1}x_0$ and $\delta=\frac{1}{\|U\|}$. Set
\[S_\lambda=\sum_{n=0}^\infty \lambda^nU^n\in \mathfrak B(\mathfrak N),\;{\rm for\;all\;}|\lambda|<\delta.\] Then $S_\lambda$ is holomorphic on $\{\lambda:\,|\lambda|<\delta\}$. Since $V$ is surjective and since
$VS_\lambda e_0=\lambda S_\lambda e_0$ for all $|\lambda|<\delta$, we obtain $V\in B_1(\Omega_V)$. It follows from uniqueness of the extension of the holomorphic map $S_\lambda$. Note that $\Omega_V$ is maximal. Then by Lemma~\ref{3.9}, $\partial \Omega_V\subseteq \sigma_e(V)$ .
\end{proof}
\begin{lemma}\label{4.11}
Suppose $T\in \mathfrak B(X)$ is transitive invertible and $U,V$ be defined by Lemma~\ref{4.10'}. Then $$\rho_{\text{s-F}}(U)=\rho_{\text{F}}(U).$$ { Furthermore,

i) $\sigma_e(U)\subseteq\sigma_e(T^{-1})$;

ii) $\sigma_e(V)=\left\{\frac{1}{\lambda}:\,\lambda\in \sigma_e(U)\right\}$;

iii) $\sigma_e(V^\dag)=\left\{\frac{1}{\lambda}:\,\lambda\in \sigma_e(U^\dag)\right\}$.}
\end{lemma}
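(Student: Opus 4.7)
The plan is to handle the four assertions in turn, using the concrete structure of $U,V,U^{\dag},V^{\dag}$ from Lemma~\ref{4.10'} together with Proposition~\ref{3.11} (Fillmore--Stampfli--Williams) and Lemma~\ref{2.8}. For $\rho_{\text{s-F}}(U)=\rho_{\text{F}}(U)$, I fix $\lambda\in\rho_{\text{s-F}}(U)$, which already makes $U-\lambda$ have closed range. Since $x_{1}=T^{-1}x_{0}$ is a cyclic vector of $U$ (Lemma~\ref{4.10'}(vi)), Proposition~\ref{3.11} gives $\dim\ker(U-\lambda)^{*}\leq 1$ for every $\lambda$, and closed range then forces $\dim\operatorname{coker}(U-\lambda)\leq 1$. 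For the kernel, $\lambda=0$ is trivial. For $\lambda\neq 0$, the identity $VU=I_{\mathfrak N}$ (Lemma~\ref{4.10'}(iii)) gives $\ker(U-\lambda)\subseteq\ker(V-1/\lambda)$; since $x_{1}^{*}\in C(V^{\dag})$ by Lemma~\ref{4.10'}(vi), Proposition~\ref{3.11} applied to $V^{\dag}$ yields $\dim\ker((V^{\dag})^{*}-\mu)\leq 1$. The canonical embedding $\iota:\mathfrak N\hookrightarrow(\mathfrak N^{\dag})^{*}$ defined by $\iota(y)(x^{*})=\langle x^{*},y\rangle$ intertwines $V$ with $(V^{\dag})^{*}$, so $\dim\ker(V-\mu)\leq\dim\ker((V^{\dag})^{*}-\mu)\leq 1$; hence $\dim\ker(U-\lambda)\leq 1$, and $U-\lambda$ is Fredholm.

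For (ii) and (iii) I will realize each pair as inverses of each other in the appropriate Calkin algebra. A direct computation on the system $\{x_{n}\}$ shows $UV=I_{\mathfrak N}-x_{1}\otimes x_{1}^{*}$, a rank-one (hence compact) perturbation of $I_{\mathfrak N}$; combined with $VU=I_{\mathfrak N}$ this yields $\pi(U)\pi(V)=\pi(V)\pi(U)=1$ in $\mathfrak B(\mathfrak N)/\mathcal K(\mathfrak N)$. So $\pi(V)=\pi(U)^{-1}$, and the standard spectral-inversion formula in a unital Banach algebra gives $\sigma_{e}(V)=\{1/\lambda:\lambda\in\sigma_{e}(U)\}$. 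The same reasoning works on $\mathfrak N^{\dag}$: Lemma~\ref{4.10'}(v) provides $U^{\dag}V^{\dag}=I_{\mathfrak N^{\dag}}$, and a parallel computation on $\{x_{n}^{*}\}$ gives $V^{\dag}U^{\dag}=I_{\mathfrak N^{\dag}}-x_{1}^{*}\otimes\phi_{1}$, where $\phi_{1}\in(\mathfrak N^{\dag})^{*}$ is evaluation at $x_{1}$. This is again a compact perturbation of the identity, so the Calkin-algebra argument proves (iii).

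For (i), it suffices to show that if $T^{-1}-\lambda$ is Fredholm on $X$, then $U-\lambda$ is Fredholm on $\mathfrak N$. Set $\mathfrak{K}=\ker(T^{-1}-\lambda)$. Then $\ker(U-\lambda)=\mathfrak{K}\cap\mathfrak N\subseteq\mathfrak{K}$ is automatically finite-dimensional. For closed range, I factor $T^{-1}-\lambda=\overline{T^{-1}-\lambda}\circ\pi$, where $\pi:X\to X/\mathfrak{K}$ is the quotient and $\overline{T^{-1}-\lambda}:X/\mathfrak{K}\to\operatorname{ran}(T^{-1}-\lambda)$ is a Banach-space isomorphism by Fredholmness. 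Since $\mathfrak N+\mathfrak{K}$ is closed in $X$ (closed plus finite-dimensional), the subspace $(\mathfrak N+\mathfrak{K})/\mathfrak{K}\subseteq X/\mathfrak{K}$ is closed, and its image $(U-\lambda)(\mathfrak N)$ under the isomorphism is closed in $\operatorname{ran}(T^{-1}-\lambda)$, hence in $\mathfrak N$. Finally, Proposition~\ref{3.11} yields $\dim\operatorname{coker}(U-\lambda)\leq 1$ exactly as in the first step, so $U-\lambda$ is Fredholm. The main obstacle I anticipate is precisely this closed-range step, since in general the restriction of a Fredholm operator to a closed invariant subspace is not Fredholm; the argument works here because the finite-dimensional kernel $\mathfrak{K}$ can be absorbed into $\mathfrak N$ without destroying closedness. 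A secondary subtlety is the duality step used in the first paragraph, where the $w^{*}$-density of $\mathfrak N^{\dag}$ in $\mathfrak N^{*}$ (Lemma~\ref{4.10'}(iv)) underlies the injectivity of $\iota$.
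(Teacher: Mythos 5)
Your proof is correct, and for parts (ii) and (iii) it coincides with the paper's argument: $VU=I_{\mathfrak N}$ together with the rank-one identity $UV=I_{\mathfrak N}-x_1\otimes x_1^*$ (and its analogue $V^\dag U^\dag=I_{\mathfrak N^\dag}-x_1^*\otimes\phi_1$) gives $\pi(V)=\pi(U)^{-1}$ in the Calkin algebra, and spectral inversion finishes. The differences lie in the first claim and in (i). For $\rho_{\text{s-F}}(U)=\rho_{\text{F}}(U)$ the paper's kernel bound is immediate: transitivity of $T$ forces $\sigma_p(T^{-1})=\emptyset$ (via Proposition \ref{2.5}), hence $\sigma_p(U)=\emptyset$ and $\ker(U-\lambda)=\{0\}$ for every $\lambda$; you instead route through $VU=I_{\mathfrak N}$ and the embedding $\iota:\mathfrak N\hookrightarrow(\mathfrak N^\dag)^*$ to get $\dim\ker(U-\lambda)\le 1$, which is valid (the injectivity of $\iota$ does follow from the totality of $\{x_n^*\}$) but heavier than necessary, and it forgoes the fact $\sigma_p(U)=\emptyset$ that the paper reuses immediately afterwards. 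For (i) you prove the contrapositive --- $T^{-1}-\lambda$ Fredholm implies $U-\lambda$ Fredholm --- by a quotient-space closed-range argument that carefully absorbs $\ker(T^{-1}-\lambda)$; the paper goes forward: if $\lambda\in\sigma_e(U)$ then, since $\ker(U-\lambda)=\{0\}$ and $\operatorname{codim}\operatorname{ran}(U-\lambda)\le1$ always, the range of $U-\lambda$ cannot be closed, so a singular sequence of unit vectors exists in $\mathfrak N$ and transfers verbatim to $T^{-1}-\lambda$. Your version is more general (it never uses $\ker(T^{-1}-\lambda)=\{0\}$), but in the present setting both reduce to the observation that $T^{-1}-\lambda$ bounded below implies $U-\lambda$ bounded below. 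I see no gaps.
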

\begin{proof}
Since $T$ is transitive, $\sigma_p(T^{-1})=\emptyset$. Note that $U=T^{-1}|_{\mathfrak N}$. Then $\sigma_p(U)=\emptyset$. By Lemma~\ref{3.10} and Lemma~\ref{4.10'},  $x_1=T^{-1}x_0$ is a cyclic vector of $U$. It follows  from  Proposition~\ref{3.11} that \[\mbox{co-dimran}(U-\lambda)=\mbox{dimker}(U^*-\lambda)\leq 1\] for all $\lambda\in \mathbb C$. Thus, $\rho_{\text{s-F}}(U)=\rho_{\text{F}}(U)$.

Suppose $\lambda\in \sigma_e(U)$. Then $\text{ran}(U-\lambda)$ is not closed. Therefore, there exists a sequence of unit vectors $\xi_n\in \mathfrak N$ such that
$$\lim_n \|(U-\lambda) \xi_n\|=0.$$ Consequently, $$\lim_n\|(T^{-1}-\lambda)\xi_n\|=0.$$ This implies that $\lambda\in \sigma_e(T^{-1})$. Thus i) holds.

{ Note that
\[
\pi(UV)=\pi(VU)=\pi(I_{\mathfrak N})
\]
and that
\[
\pi(U^\dag V^\dag)=\pi(V^\dag U^\dag)=\pi(I_{\mathfrak N^\dag}).
\] Then we obtain ii) and iii).}
\end{proof}
\begin{lemma}\label{4.12}
Suppose $T\in \mathfrak B(X)$ is an invertible transitive operator. Let the Banach space $\mathfrak N$ and the operator $U$ be defined in Lemma~\ref{4.10'}. If $\Omega'$ is a connected component of $\rho_F(U)$  not containing the origin, then $\Omega'\subset\rho(U)$ and $\partial \Omega'\subseteq\sigma_e(U)$.
\end{lemma}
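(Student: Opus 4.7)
The plan is to show that on the Fredholm domain $\rho_F(U)$ the index $\mathrm{ind}(U-\lambda)$ takes only the two values $-1$ and $0$, to identify the locus where it equals $-1$ with the connected component of $\rho_F(U)$ containing the origin via Herrero's theorem, and then to deduce the boundary statement by a short topological argument.

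First I would nail down the kernel and cokernel dimensions of $U-\lambda$ uniformly in $\lambda$. Transitivity of $T$ immediately gives $\sigma_p(T^{-1})=\emptyset$: an eigenvector of $T^{-1}$ would span a one-dimensional $T$-invariant subspace, contradicting $\mathcal{C}(T)=X\setminus\{0\}$. Since $U=T^{-1}|_{\mathfrak N}$, this forces $\ker(U-\lambda)=\{0\}$ for every $\lambda\in\mathbb C$. On the other hand, Lemma~\ref{4.10'}(vi) says that $x_1=T^{-1}x_0$ is a cyclic vector of $U$, so $\mathcal{C}(U)\neq\emptyset$, and Proposition~\ref{3.11} then yields $\dim\ker(U^*-\lambda)\leq 1$ for every $\lambda\in\mathbb C$. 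Since $\rho_F(U)=\rho_{\text{s-F}}(U)$ by Lemma~\ref{4.11}, on $\rho_F(U)$ we get $\mathrm{ind}(U-\lambda)=-\dim\ker(U^*-\lambda)\in\{-1,0\}$, and the value $0$ forces $U-\lambda$ to be invertible, i.e., $\lambda\in\rho(U)$.

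Next I would invoke Herrero's theorem (Proposition~\ref{3.12}): since $\mathcal{C}(U)\neq\emptyset$, the set $\rho_{\text{s-F}}^{-1}(U)=\{\lambda:\mathrm{ind}(U-\lambda)=-1\}$ is simply connected, hence connected. By Lemma~\ref{4.10'}(i), $\mathrm{ind}(U)=-1$, so $0\in\rho_{\text{s-F}}^{-1}(U)$. Because the index is locally constant on $\rho_F(U)$, the set $\rho_{\text{s-F}}^{-1}(U)$ is an open union of connected components of $\rho_F(U)$; its own connectedness then forces it to coincide with the single component of $\rho_F(U)$ that contains the origin. Hence the hypothesized component $\Omega'$, which avoids $0$, is disjoint from $\rho_{\text{s-F}}^{-1}(U)$, so $\mathrm{ind}(U-\lambda)=0$ throughout $\Omega'$, and the previous paragraph then gives $\Omega'\subseteq\rho(U)$.

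For the boundary inclusion I would argue topologically. The set $\rho_F(U)=\mathbb C\setminus\sigma_e(U)$ is open, so its connected components are open and pairwise disjoint. No boundary point $\lambda$ of $\Omega'$ can lie in $\rho_F(U)$: such a $\lambda$ would admit an open connected neighborhood inside $\rho_F(U)$ that meets $\Omega'$, and then the $\rho_F(U)$-component of $\lambda$ would have to equal $\Omega'$, placing $\lambda\in\Omega'$ and contradicting $\lambda\in\partial\Omega'\setminus\Omega'$. Hence $\partial\Omega'\subseteq\sigma_e(U)$. The only delicate step is the first, where one combines the transitivity of $T$ and the cyclicity of $x_1$ to pin $\mathrm{ind}(U-\lambda)$ to $\{-1,0\}$; once that is secured, Herrero's theorem separates the components essentially for free, and the boundary inclusion is a standard argument about open subsets of $\mathbb C$.
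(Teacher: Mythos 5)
Your proof is correct and follows essentially the same route as the paper's: both use the cyclicity of $x_1=T^{-1}x_0$ for $U$ together with Proposition~\ref{3.11} to bound $\dim\ker(U^*-\lambda)$ by $1$, Herrero's result (Proposition~\ref{3.12}) with Lemma~\ref{4.11} to identify $\rho_{\mathrm F}^{-1}(U)$ with the component of $\rho_F(U)$ containing the origin, and $\sigma_p(U)=\emptyset$ (from transitivity of $T$) to upgrade index zero on $\Omega'$ to invertibility. Your explicit topological argument for $\partial\Omega'\subseteq\sigma_e(U)$ fills in a step the paper leaves implicit, but this is a routine addition rather than a different approach.
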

\begin{proof}
 By Lemma~\ref{3.10} and Lemma~\ref{4.10'}, $T^{-1}x_0\in C(U) (\neq\emptyset)$. It follows from Proposition~\ref{3.12} and Lemma \ref{4.11} that $\Omega\equiv\rho_{{\rm F}}^{-1}(U)=\rho_{\text{s-F}}^{-1}(U)$ is simply connected.  Due to Lemma~\ref{4.10}, $\Omega$ is a connected component of $\rho_{\text{F}}(U)$ containing the origin.  Since $\Omega'$ is a connected component of $\rho_F(U)$  not containing the origin,  $\Omega'\bigcap\Omega=\emptyset$. By Proposition~\ref{3.11}, $\mbox{co-dimran}(U-\lambda)\leq 1$ for all $\lambda\in \Omega'$. $\Omega'\bigcap \Omega=\emptyset$ deduces $\mbox{co-dimran}(U-\lambda)=0$ for all $\lambda\in \Omega'$. Since $\sigma_p(T^{-1})=\emptyset$ and since $\mathfrak N\in \mbox{Lat} T^{-1}$, $\sigma_p(U)=\emptyset$. Therefore, $\mbox{ran}(U-\lambda)=\mathfrak N$ for all $\lambda\in \Omega'$, and  $\Omega'\subset\rho(U)$ follows.
\end{proof}

\begin{lemma}\label{4.13}
Suppose that $\Omega_0$ is the connected component of $\sigma(U)^\land\bigcap\rho_{\text{F}}(U)$ containing the origin, and that $\Omega_U$ is another connected component of $\sigma(U)^\land\bigcap\rho_{\text{F}}(U)$ which is different from $\Omega_0$. Then
\begin{enumerate}
\item $\mbox{codim ran}(U-\lambda)=1$, $\mbox{ind}(U-\lambda)=-1$, $\mbox{for all}\; \lambda\in \Omega_0$;
\item $\partial\Omega_0,\; \partial \Omega_U\subset \sigma_e(U)$,\; \text{and}\; $\Omega_U\subseteq \rho(U)$;
\item $\Omega_V\equiv\{\frac{1}{\lambda}:\lambda\in \mathbb C\setminus \sigma(U)^\land\bigcup\{\infty\}\}$ is the connected component of $\sigma(V)^\land\bigcap \rho_F(V)$ containing the origin, and $\Omega_U^{-1}=\{\frac{1}{\lambda}:\,\lambda\in \Omega_U\}$ is another connected component of $\sigma(V)^\land\bigcap\rho_F(V)$, and
\item $\partial\Omega_V, \partial\Omega_U^{-1}\subset \sigma_e(V).$
\end{enumerate}
\end{lemma}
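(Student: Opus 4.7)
The plan is to deduce all four items from the structural results already proven for $U$ and $V$ --- Lemmas \ref{3.9}, \ref{4.10}, \ref{4.10'}, \ref{4.11}, \ref{4.12} and Proposition \ref{1.3} --- together with a small amount of spectral topology. The key opening observation is that $\sigma_p(U)=\emptyset$ (inherited from transitivity of $T$) forces $\partial\sigma(U)^\land\subset\sigma_e(U)$: every boundary point of $\sigma(U)$ is an approximate eigenvalue via the classical $\|(U-\lambda_n)^{-1}\|\to\infty$ argument, and if such a point also lay in $\rho_F(U)$ then $U-\lambda_0$ would be Fredholm and injective with closed range, hence bounded below --- contradiction. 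Consequently $\sigma(U)^\land\cap\rho_F(U)={\rm int}(\sigma(U)^\land)\cap\rho_F(U)$ is open in $\mathbb{C}$, so $\Omega_0$ and $\Omega_U$ are genuinely open connected components.

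From here, (1) and (2) are routine. For (1), local constancy of the Fredholm index on $\rho_F(U)$ together with $\mbox{ind}(U)=-1$ from Lemma \ref{4.10'}(i) gives $\mbox{ind}(U-\lambda)=-1$ throughout $\Omega_0$; combined with $\mbox{dim ker}(U-\lambda)=0$ this yields $\mbox{codim ran}(U-\lambda)=1$. For (2), $\Omega_U\subset\rho(U)$ follows directly from Lemma \ref{4.12}. For the boundary statements, a point $\lambda_0\in\partial\Omega_0\cup\partial\Omega_U$ cannot lie in the open set $\sigma(U)^\land\cap\rho_F(U)$ (else it would be interior to its own component, contradicting being a boundary point) nor in the open set $\mathbb{C}\setminus\sigma(U)^\land$ (else some neighborhood would be disjoint from $\Omega_0\cup\Omega_U\subset\sigma(U)^\land$), so $\lambda_0\in\sigma_e(U)$.

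For (3) and (4) I transfer everything to $V$ via $VU=I$. Proposition \ref{1.3} identifies $\mathbb{C}\setminus\sigma(U)^\land$ as the unbounded component of $\mathbb{C}\setminus\sigma(U)$, hence connected, so $\Omega_V=\{1/\lambda:\lambda\in\mathbb{C}\setminus\sigma(U)^\land\}\cup\{0\}$ is connected and contains $0$. Lemma \ref{4.10}(ii) gives $V\in B_1(\Omega_V)$, and Lemma \ref{3.9} supplies maximality of $\Omega_V$ with $\partial\Omega_V\subset\sigma_e(V)$, upgrading $\Omega_V$ to the connected component of $\sigma(V)^\land\cap\rho_F(V)$ through the origin. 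For $\Omega_U^{-1}$ I would use the factorization $(V-1/\lambda)U=-\lambda^{-1}(U-\lambda)$ for $\lambda\neq 0$: since $I-UV$ is of rank one, in the Calkin algebra $\pi(V)=\pi(U)^{-1}$, so Fredholmness of $U-\lambda$ transfers to Fredholmness of $V-1/\lambda$, and the product rule applied to the factorization gives $\mbox{ind}(V-1/\lambda)=\mbox{ind}(U-\lambda)+1$. On $\Omega_U\subset\rho(U)$ this index equals $1$, so $1/\lambda\in\sigma(V)\cap\rho_F(V)$, and $\Omega_U^{-1}\subset\sigma(V)^\land\cap\rho_F(V)$. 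The inclusion $\partial\Omega_U^{-1}\subset\sigma_e(V)$ follows by transporting $\partial\Omega_U\subset\sigma_e(U)$ through the homeomorphism $\lambda\mapsto 1/\lambda$ (valid since $\Omega_U$ is bounded away from $0$) and using $\sigma_e(V)=\{1/\lambda:\lambda\in\sigma_e(U)\}$ from Lemma \ref{4.11}(ii); this simultaneously certifies $\Omega_U^{-1}$ as a connected component distinct from $\Omega_V$, since $0\in\Omega_V$ while $\Omega_U^{-1}$ is bounded and avoids $0$, and the two are separated by $\sigma_e(V)$.

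I expect the main technical hurdle to be the index-additivity step in (3): because $U$ is not invertible, one cannot literally ``divide by $U$'', and the argument must route through the Calkin-algebra identity $\pi(V)=\pi(U)^{-1}$ before applying the product rule to the factorization $(V-1/\lambda)U=-\lambda^{-1}(U-\lambda)$. A secondary delicacy is the opening observation $\partial\sigma(U)^\land\subset\sigma_e(U)$, which is what makes $\sigma(U)^\land\cap\rho_F(U)$ a truly open set and legitimises the phrase ``connected component'' in its classical sense.
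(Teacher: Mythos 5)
Your proposal is correct, and for items (3)--(4) it follows the paper's own route: both arguments hinge on the Calkin-algebra identity $\pi(V)\pi(U)=\pi(I)$ to transport Fredholmness, index, and essential-spectrum data between $U$ and $V$ under $\lambda\mapsto 1/\lambda$, together with $V\in B_1(\Omega_V)$ and the maximality statement of Lemma \ref{3.9}. Where you diverge is in (1)--(2): the paper obtains the boundary inclusions by passing to the dual-side operator $U^\dag$, invoking $U^\dag\in B_1(\Omega_0)$ with $\Omega_0$ maximal (Lemmas \ref{4.10'} and \ref{4.9}) so that $\partial\Omega_0\subset\sigma_e(U^\dag)=\sigma_e(U^*)=\sigma_e(U)$, whereas you avoid $U^\dag$ altogether by first proving $\partial\sigma(U)^\land\subset\sigma_e(U)$ directly from $\sigma_p(U)=\emptyset$ and the approximate-point-spectrum characterization of $\partial\sigma(U)$, which makes $\sigma(U)^\land\cap\rho_F(U)$ genuinely open and reduces the boundary claims to point-set topology plus index continuity. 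Your version is more elementary and self-contained for that step (and has the side benefit of justifying, rather than assuming, that the ``connected components'' in the statement are open); the paper's version buys uniformity, since the $B_1$-maximality machinery for $U^\dag$ is needed elsewhere anyway. Your explicit factorization $(V-\tfrac{1}{\lambda})U=-\lambda^{-1}(U-\lambda)$ with the index product rule is just a worked-out form of the paper's appeal to $\pi(UV)=\pi(VU)=I$, not a different idea.
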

\begin{proof}
By Lemma~\ref{4.10'} and Lemma~\ref{4.9}, $U^\dag\in B_1(\Omega_0)$ and $\Omega_0$ is maximal. By Lemmas~\ref{4.10'}, \ref{4.9} and ~\ref{4.11}, we obtain (1) and (2).

Now, we are in position to prove (3). Clearly, $\Omega_V$ is the connected component of of $\sigma(V)^\land\cap \rho_F(V)$ containing the origin. Hence, $\partial\Omega_V\subseteq \sigma_e(V)$. By Lemma~\ref{4.12}, $\Omega_U\subseteq \rho(U)$ and $\partial \Omega_U\subseteq \sigma_e(U)$. Note that \[
\pi(UV)=\pi(VU)=\pi(V)\pi(U)=I.
\]
Then $\Omega_U^{-1}$ is another connected  component of $\sigma(V)^\land\cap\rho_F(V)$ with  $$\partial \Omega_U^{-1}\subseteq \sigma_e(V).$$
\end{proof}

\section{Proof of Theorem 1.4}
In this  section, we will prove Theorem \ref{1.5} presented in the first section. Before doing it, we should emphasize that the operators $U,V,U^\dag$ and $V^\dag$ are defined in Lemma~\ref{4.10'}. Now, we restate Theorem \ref{1.5} as follows. 
\begin{theorem}\label{4.14}
Suppose that $T\in \mathfrak B(X)$ is an invertible operator and there exists $0\neq x_0\notin C(T^{-1})$. If $\sigma(U)^\land\bigcap \rho_\text{F}(U)$ has a connected component not containing the origin, then $T$ is intransitive.
\end{theorem}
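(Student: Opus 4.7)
The plan is to argue by contradiction: assume $T$ is transitive. Since $x_{0}\notin C(T^{-1})$, the space $\mathfrak{N}=[T^{-k}x_{0}:k\ge 1]$ is a proper closed subspace of $X$, and transitivity of $T$ forces $x_{0}\notin\mathfrak{N}$ (otherwise $\mathfrak{N}$ would already be a nontrivial $T$-invariant subspace). Invoking Lemmas~4.2, 4.3 and 4.10' gives a biorthogonal system $\{(x_{n},x_{n}^{*})\}$ with $x_{n}=T^{-n}x_{0}$ and operators $U,V\in\mathfrak{B}(\mathfrak{N})$ satisfying $VU=I_{\mathfrak{N}}$, $\ker V=[x_{1}]$, and $V=(T-x_{0}\otimes x_{1}^{*})|_{\mathfrak{N}}$. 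Lemma~4.13(2), applied to the hypothesised component $\Omega_{U}$, yields $\Omega_{U}\subseteq\rho(U)$.

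Next I will build eigenvectors of $V$ from $\Omega_{U}$ and derive a key identity. For every $\lambda\in\Omega_{U}$, the vector $y_{\lambda}:=(U-\lambda)^{-1}x_{1}\in\mathfrak{N}$ is defined; applying $V$ to $(U-\lambda)y_{\lambda}=x_{1}$ together with $VU=I$ and $Vx_{1}=0$ shows $(V-1/\lambda)y_{\lambda}=0$. Matching coefficients of the formal expansion $y_{\lambda}=-\sum_{n\ge 1}x_{n}/\lambda^{n}$ against the biorthogonal system gives $\langle x_{1}^{*},y_{\lambda}\rangle=-1/\lambda$, whence
\[
Ty_{\lambda}=Vy_{\lambda}+\langle x_{1}^{*},y_{\lambda}\rangle\,x_{0}=\frac{y_{\lambda}-x_{0}}{\lambda},\qquad\lambda\in\Omega_{U},
\]
equivalently $(I-\lambda T)y_{\lambda}=x_{0}$. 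Setting $\mathfrak{M}:=[y_{\lambda}:\lambda\in\Omega_{U}]\subseteq\mathfrak{N}$, the space $\mathfrak{M}$ is $V$-invariant as a closed span of $V$-eigenvectors, and the key identity gives $T(\mathfrak{M})\subseteq\mathfrak{M}+\mathbb{C}x_{0}=:\widetilde{\mathfrak{M}}\subseteq\mathfrak{N}_{0}=\mathbb{C}x_{0}+\mathfrak{N}\subsetneq X$, so $\widetilde{\mathfrak{M}}$ is the natural candidate for a proper closed $T$-invariant subspace.

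To complete the argument I will pass to the dual. Since $\widetilde{\mathfrak{M}}\subsetneq X$, there is a nonzero $x^{*}\in\widetilde{\mathfrak{M}}^{\perp}$; the key identity shows $T^{*}x^{*}$ still annihilates $\mathfrak{M}$, so the task is to show that the closed $T^{*}$-invariant subspace generated by $x^{*}$ remains a proper subspace of $X^{*}$. Its preannihilator is then a nontrivial $T$-invariant subspace of $X$, contradicting transitivity.

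The main obstacle is precisely this last step. Under transitivity, $x_{0}$ is $T$-cyclic, so any $T$-invariant subspace containing $x_{0}$ together with the full forward $T$-orbit of $x_{0}$ equals $X$; the spectral hypothesis must prevent this. The crucial point is that $\Omega_{U}$ is a \emph{bounded} component of $\rho(U)$, disjoint from the unbounded component $\Omega_{\infty}$: cyclicity of $x_{1}$ for $U$ (Lemma~4.10'(vi)) forces $[y_{\lambda}:\lambda\in\Omega_{\infty}]=\mathfrak{N}$, whereas $\lambda\mapsto\langle x^{*},y_{\lambda}\rangle$ is only holomorphic on each connected component of $\rho(U)$ separately, and can vanish on $\Omega_{U}$ without vanishing on $\Omega_{\infty}$. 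Thus $\mathfrak{M}$ can be a proper subspace of $\mathfrak{N}$, supplying a genuine source of functionals in $\widetilde{\mathfrak{M}}^{\perp}$. Exploiting this gap, together with the Fredholm boundary condition $\partial\Omega_{U}^{-1}\subseteq\sigma_{e}(V)$ from Lemma~4.13(4), to control the iterated $T^{*}$-orbit of a well-chosen functional and keep it inside a proper weak-$*$ closed subspace of $X^{*}$, is the technical heart of the argument.
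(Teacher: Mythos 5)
Your preliminary identities are essentially correct: for $\lambda\in\Omega_U\subseteq\rho(U)$ the vector $y_\lambda=(U-\lambda)^{-1}x_1$ is a nonzero eigenvector of $V$ with eigenvalue $1/\lambda$, and $\langle x_1^*,y_\lambda\rangle=-1/\lambda$ (although this should be derived by pairing $(U-\lambda)y_\lambda=x_1$ with $x_1^*$ and using $U^*x_1^*=0$, not by ``matching coefficients'' of the Neumann series $-\sum_{n\ge1}x_n/\lambda^n$: that expansion converges only for $|\lambda|>\|U\|$, which lies in the unbounded component of $\rho(U)$, and analytic continuation does not carry it across $\sigma(U)$ into the bounded component $\Omega_U$). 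The proof nevertheless has two genuine gaps. First, $\widetilde{\mathfrak{M}}=\mathfrak{M}+\mathbb{C}x_0$ is not $T$-invariant: the identity $Ty_\lambda=(y_\lambda-x_0)/\lambda$ controls $T$ on $\mathfrak{M}$ but says nothing about $Tx_0$, and under the transitivity assumption $x_0$ is a cyclic vector of $T$, so any closed $T$-invariant subspace containing $x_0$ is all of $X$; the candidate subspace is therefore dead on arrival. Second, and decisively, the concluding dual argument is only a description of what would need to be done: you name the obstacle (keeping the forward $T^*$-orbit of a functional inside a proper $w^*$-closed subspace) and state that overcoming it ``is the technical heart of the argument,'' but you never overcome it. No contradiction is derived, so the theorem is not proved.

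It is worth noting that your $y_\lambda$ is one step from the paper's actual punchline, which runs in a different direction. The paper proves $\Omega_U^{-1}\subseteq\rho(V)$ by passing to the operators $U^\dag,V^\dag$ on $\mathfrak{N}^\dag=[x_n^*]$: cyclicity of $x_1^*$ for $V^\dag$ together with Herrero's theorem (Proposition \ref{3.12}) makes $\rho_F^{-1}(V^\dag)$ simply connected, forcing $\mathrm{ind}(V^\dag-\mu)=0$ for $\mu\in\Omega_U^{-1}$; injectivity of $V^*-\mu$ there (Lemma \ref{3.8}, since $V\in B_1(\Omega_V)$) then yields invertibility, and the identity $V(U-\lambda)=\lambda\bigl(\tfrac{1}{\lambda}-V\bigr)$ with $U-\lambda$ invertible would make $V$ invertible, contradicting $\dim\ker V=1$. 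Since $1/\lambda\in\rho(V)$ in particular excludes $1/\lambda\in\sigma_p(V)$, the existence of your eigenvector $y_\lambda$ would finish the proof instantly once that spectral fact is available; it is exactly this index-and-cyclicity argument on the dual side that is missing from your write-up.
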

\begin{proof}

{ Suppose, to the contrary, that $T$ is transitive. 
Let $\Omega_0$ be the connected component of $\sigma(U)^\land\bigcap\rho_{\text{F}}(U)$ containing the origin,  $\Omega_U$ be another connected component of $\sigma(U)^\land\bigcap\rho_{\text{F}}(U)$ which is different from $\Omega_0$, and let $\Omega_0^\dag$ be the connected component of $\sigma(U^\dag)\bigcap \rho_F(U^\dag)$ containing the origin. Then by Lemma~\ref{4.10'}, $U^\dag\in B_1(\Omega_0^\dag)$. We first show
\begin{equation}\label{9.2'}\Omega_0^\dag=\Omega_0.\end{equation}
By Theorem~\ref{4.8} and Lemma~\ref{4.9},
\[
\sigma(U^\dag)^\land\cap \rho_F(U^\dag)=\sigma(U^*)^\land\cap\rho_F(U^*)=\sigma(U)^\land\cap\rho_F(U)
\]
and
\[
\sigma_e(U^\dag)=\sigma_e(U^*).
\]
Therefore, (\ref{9.2'}) holds. Consequently, $\Omega_U\bigcap \Omega_0^\dag=\Omega_U\bigcap\Omega_0=\emptyset$.

By Lemma~\ref{4.13}, $\Omega_U\subseteq \rho(U)$ and $\Omega_V\equiv\big\{\frac{1}{\lambda}:\lambda\in \mathbb C\setminus \sigma(U)\bigcup\{\infty\}\big\}$ is the connected component of $\sigma(V)\bigcap \rho_F(V)$ containing the origin and
\[
\Omega_U^{-1}\bigcap\Omega_V=\emptyset.
\]
By Lemma~\ref{4.10'}, $V\in B_1(\Omega_V)$ and $x_1^*$ is a cyclic vector of $V^\dag$. By Lemma~\ref{3.12}, $\rho_F^{-1}(V^\dag)$ is simply connected.

Since $V^\dag U^\dag=I_{\mathfrak{N}^\dag}$ and since $\sigma_e(V^\dag)=\left\{\frac{1}{\lambda}:\,\lambda\in\sigma_e(U^\dag)\right\}$,
$\rho_F^{-1}(V^\dag)=\Omega_V$. Therefore,
\begin{equation}\label{9.3'} {\rm ind}(V^\dag-\lambda)=0,\;{\rm for\;all}\; \lambda\in\Omega_U^{-1}.\end{equation}
Next, we show
\begin{equation}\label{9.4'}\Omega_U^{-1}\subseteq \rho(V).\end{equation}

By Theorem~\ref{4.8}, $\rho(V^\dag)\subseteq \rho(V^*)$. Since $\Omega_U^{-1}\subseteq \rho_F(V)$, $\Omega_U^{-1}\subseteq \rho_F(V^*)$. Lemma~\ref{4.10'} entails $V\in B_1(\Omega_V)$. By Lemma~\ref{3.8}, $\text{dimker}(V^*-\lambda)=0$ for $\lambda\in \Omega_U^{-1}$. Since $V^\dag-\lambda=(V^*-\lambda)|_{{\mathfrak N}^\dag}$, $\text{dimker}(V^\dag-\lambda)=0$ for all $\lambda\in\Omega_U^{-1}$.
Note that for $\lambda\in\Omega_U^{-1}$, ${\rm ind}(V^\dag-\lambda)=0$. It follows that $V^\dag-\lambda$ is invertible and
\[\Omega_U^{-1}\subseteq \rho(V^\dag)\subseteq \rho(V^*).\] Thus, (\ref{9.4'}) holds.

 By Lemma~\ref{4.13}, for $\lambda\in \Omega_U$, $U-\lambda$ is invertible. It follows from (\ref{9.4'}) that
\[
V(U-\lambda)=\lambda\left(\frac{1}{\lambda}-V\right)
\]
is invertible. This is a contradiction to that $V\in B_1(\Omega_V)$ is not invertible. }

\end{proof}

\section{More spectrum properties}
Recall (\ref{1.6}) that for $A\in\mathfrak B(X)$,  we can define a quotient operator $A^Y\in\mathfrak B(X/Y)$  by
\begin{equation}\label{5.1}
A^Y(x+Y)=Ax+Y,\;\;{\rm for\;all\;}x\in X.
\end{equation}
if $Y\subset X$ is a closed invariant subspace of $A$.
\begin{lemma}\label{5.2}
Suppose that $A\in\mathfrak B(X)$ and $Y\subset{\rm Lat}A$.   Then $A\in\mathfrak B(X)$ admits the three-operator property, i.e.,  invertibility of any two of the three operators $A, A^Y$ and $A|_Y$  implies  invertibility of the third one. Consequently,
\begin{enumerate}
\item $\sigma(A)\subseteq \sigma(A|_Y)\cup \sigma(A^Y)$;
\item $\sigma(A|_Y)\subseteq \sigma(A)\cup \sigma(A^Y)$;
\item $\sigma(A^Y)\subseteq \sigma(A)\cup \sigma(A|_Y)$.
\end{enumerate}
\end{lemma}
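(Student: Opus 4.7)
The plan is to reduce everything to the three-operator property and obtain the three spectral inclusions as immediate corollaries. Indeed, for any $\lambda\in\mathbb{C}$ one has $(A-\lambda)|_Y=A|_Y-\lambda$ and $(A-\lambda)^Y=A^Y-\lambda$, so once the three-operator property is established, inclusion (1) follows from the implication ``$A|_Y-\lambda$ and $A^Y-\lambda$ invertible $\Rightarrow$ $A-\lambda$ invertible'', and (2), (3) follow analogously. So the whole lemma reduces to proving the three-operator property, which I would do by three short diagram chases on the short exact sequence
\[
0\longrightarrow Y \xrightarrow{\;\iota\;} X \xrightarrow{\;q\;} X/Y\longrightarrow 0,
\]
which intertwines $A|_Y$, $A$ and $A^Y$ via $A\iota=\iota A|_Y$ and $qA=A^Y q$. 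In each case boundedness of the constructed inverse will come for free from the open mapping theorem once bijectivity is established.

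Case (a): assume $A|_Y$ and $A^Y$ are invertible. For injectivity of $A$, if $Ax=0$ then $A^Y(x+Y)=0$, so $x\in Y$ by injectivity of $A^Y$, and then $A|_Y x=0$ forces $x=0$. For surjectivity, given $z\in X$, pick $x_1\in X$ with $A^Y(x_1+Y)=z+Y$, i.e.\ $z-Ax_1\in Y$, then use surjectivity of $A|_Y$ to find $y\in Y$ with $A|_Y y=z-Ax_1$, so $A(x_1+y)=z$.

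Case (b): assume $A$ and $A^Y$ are invertible. Then $A|_Y$ inherits injectivity from $A$. For surjectivity of $A|_Y$, given $y\in Y$ use surjectivity of $A$ to obtain $x$ with $Ax=y$; applying $q$ gives $A^Y(x+Y)=0$, so by injectivity of $A^Y$ we have $x\in Y$, hence $A|_Y x=y$. Case (c): assume $A$ and $A|_Y$ are invertible. Surjectivity of $A^Y$ is immediate from surjectivity of $A$ composed with $q$. For injectivity of $A^Y$, if $Ax\in Y$, use surjectivity of $A|_Y$ to write $Ax=Ay$ with $y\in Y$, so $A(x-y)=0$ gives $x=y\in Y$, i.e., $x+Y=0$.

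I do not expect a real obstacle here; the main thing to be careful about is the bookkeeping in the diagram chases (making sure to use the invariance $AY\subseteq Y$ exactly where needed, and that $A^Y\in\mathfrak{B}(X/Y)$ is automatically bounded because $q$ is bounded and open). Once the three bijectivity arguments are in place, invoking the open mapping theorem yields bounded inverses and the three-operator property, from which conclusions (1)--(3) drop out by the substitution $A\rightsquigarrow A-\lambda$ noted above.
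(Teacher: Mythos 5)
Your proof is correct and follows essentially the same elementary element-chasing approach as the paper, which likewise verifies injectivity and surjectivity directly along the exact sequence $0\to Y\to X\to X/Y\to 0$; the only difference is that the paper writes out just the one implication it needs (invertibility of $A|_Y$ and $A^Y$ implies invertibility of $A$) and leaves the other two and the passage to $A-\lambda$ implicit, whereas you spell out all three cases. If anything your case (a) is the more careful version: the paper argues the surjectivity step by contraposition, asserting that $z\notin{\rm ran}\,A$ and $z\notin Y$ force $z+Y\notin{\rm ran}\,A^Y$, which is not literally valid as stated, while your argument (lift $z+Y$ through $A^Y$ and then correct by surjectivity of $A|_Y$) is the standard and correct way to close that step.
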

\begin{proof}
Here we only show that if both $A_Y$ and $A^Y$ are invertible, then $A$ is invertible.  Suppose, to the contrary, that $A$ is either not injective, or, not surjective.
The former implies that there is $0\neq z\in X$ so that $Az=0$. If $z\in Y$, then $A_Y$ is not invertible. If $z\in X\setminus Y$, then $z+Y\neq0$, $A(z+Y)=0$, which means that $A^Y$ is not injective. The latter entails that there is $z\in X\setminus\mbox{ran}A$. Consequently, if $z\notin Y$, then $z+Y\notin\mbox{ran}(A^Y)$; if $z\in Y$, then $A|_Yy\neq z$ for all $y\in Y$. Thus, either $A^Y$, or, $A|_Y$ is not surjective.
\end{proof}

\begin{lemma}\label{5.3}
Suppose that $T\in\mathfrak B(X)$ is an invertible transitive operator and $0\neq x_0\in X$ so that  $\mathfrak N\equiv[x_n]_{n\geq1}\subset X$, and  that
$\{(x_n,x^*_n)\}\subset X\times X^*$ is the bi-orthogonal system with $x_n=T^{-n}x_0$ for all $n\in\N$ defined in Lemma \ref{4.3}.
Let $T_1=T-x_0\otimes x^*_1$.
Then
\begin{enumerate}
\item $\mathfrak N\in\mbox{Lat}T_1\bigcap\mbox{Lat}T^{-1}$;
\item $(T^{-1})^\mathfrak NT_1^\mathfrak N=I_{X/\mathfrak N}$;
\item $\mbox{ind}(T^{-1})^\mathfrak N=1$. Consequently, there is a maximal connected open set $\Omega_{\rm max}\subset\mathbb C$ containing the origin such that $(T^{-1})^\mathfrak N\in B_1(\Omega_{\rm max})$.
\end{enumerate}
\end{lemma}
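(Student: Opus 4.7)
The plan is to establish the three claims in order, using the bi-orthogonal system $\{(x_n,x_n^*)\}$ from Lemma~\ref{4.3} throughout. For part~(1), I would work on the spanning set $\{x_n = T^{-n}x_0 : n \geq 1\}$ of $\mathfrak N$. Applying $T^{-1}$ shifts this set into $[x_n]_{n\geq 2}\subseteq\mathfrak N$. For $T_1$, bi-orthogonality $\langle x_1^*,x_n\rangle = \delta_{1,n}$ gives
\[
T_1 x_n = Tx_n - \langle x_1^*, x_n\rangle x_0 = x_{n-1} - \delta_{1,n}\, x_0,
\]
which equals $x_{n-1}\in\mathfrak N$ for $n\geq 2$ and $0$ for $n=1$. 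Both invariance statements follow by linearity and continuity. Part~(2) is a one-line verification: for $x\in X$,
\[
(T^{-1})^{\mathfrak N} T_1^{\mathfrak N}(x + \mathfrak N) = T^{-1}\bigl(Tx - \langle x_1^*, x\rangle x_0\bigr) + \mathfrak N = x - \langle x_1^*, x\rangle x_1 + \mathfrak N = x + \mathfrak N,
\]
because $x_1 = T^{-1}x_0 \in \mathfrak N$.

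The content of part~(3) lies in the index computation together with the $B_1$-conclusion. From (2), $(T^{-1})^{\mathfrak N}$ is surjective, so $\mathrm{ind}\,(T^{-1})^{\mathfrak N} = \dim\ker(T^{-1})^{\mathfrak N}$. The kernel is $\{x+\mathfrak N : T^{-1}x\in \mathfrak N\} = T\mathfrak N/\mathfrak N$, and $T\mathfrak N = [T^{-n}x_0 : n\geq 0] = \mathfrak N + \mathrm{span}\{x_0\}$. The crucial input is transitivity of $T$: since $\mathfrak N$ is a nontrivial proper closed subspace (cf.\ the hypothesis of Lemma~\ref{4.3}), it cannot be $T$-invariant, which forces $x_0\notin\mathfrak N$; hence $\dim\ker(T^{-1})^{\mathfrak N} = 1$ and $\mathrm{ind}\,(T^{-1})^{\mathfrak N}=1$.

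For the $B_1$-conclusion I would invoke Lemma~\ref{3.13} with $A = (T^{-1})^{\mathfrak N}$, $B = T_1^{\mathfrak N}$ and the nonzero kernel vector $x_0+\mathfrak N$. Everything is in place except cyclicity, namely that $Y' \equiv \overline{\mathrm{span}\{T_1^n x_0: n\geq 0\} + \mathfrak N} = X$. I would prove this by showing $Y'$ is $T$-invariant. Using $T = T_1 + x_0\otimes x_1^*$ and $x_0 = T_1^0 x_0 \in Y'$,
\[
T(T_1^n x_0) = T_1^{n+1}x_0 + \langle x_1^*, T_1^n x_0\rangle x_0 \in Y',\qquad T\mathfrak N = \mathfrak N + \mathrm{span}\{x_0\} \subseteq Y';
\]
continuity of $T$ then gives $TY'\subseteq Y'$. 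Since $T$ is transitive and $0\neq x_0 \in Y'$, the only possibility is $Y' = X$. Lemma~\ref{3.13} now yields $(T^{-1})^{\mathfrak N}\in B_1(\Omega_A)$ for the connected component $\Omega_A$ of $\rho_{\text{s-F}}((T^{-1})^{\mathfrak N})$ containing $0$, and Lemma~\ref{3.9} enlarges $\Omega_A$ to a maximal connected open neighborhood $\Omega_{\max}$ of $0$ with $(T^{-1})^{\mathfrak N}\in B_1(\Omega_{\max})$.

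I expect the main obstacle to be the cyclicity verification. The clean trick is that the rank-one discrepancy $T - T_1 = x_0\otimes x_1^*$ has range $\mathrm{span}\{x_0\}$, which is already contained in $Y'$ by construction; this lets $T$-invariance of $Y'$ be read off directly from $T_1$-closure (built in) plus the presence of $x_0$, after which transitivity of $T$ closes the argument in a single line.
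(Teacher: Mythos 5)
Your proposal is correct and follows essentially the same route as the paper: the same computations $T_1x_1=0$, $T_1x_k=x_{k-1}$ for parts (1)--(2), and for part (3) the same strategy of exhibiting $x_0+\mathfrak N$ as a cyclic vector of $T_1^{\mathfrak N}$ via the rank-one identity $T=T_1+x_0\otimes x_1^*$ and transitivity of $T$, followed by Lemma~\ref{3.13} (and Lemma~\ref{3.9} for maximality). Your only additions are welcome bits of care the paper leaves implicit, namely the verification that $x_0\notin\mathfrak N$ and that $\ker(T^{-1})^{\mathfrak N}=T\mathfrak N/\mathfrak N$ is exactly one-dimensional.
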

\begin{proof}
(1)  It suffices to note that $T_1x_1=0$ and  $T_1x_k=x_{k-1}$ for all $k\geq 2$.

(2) It follows from (1) that we can define the quotient mappings as following:
\[(T^{-1})^\mathfrak N(x+\mathfrak N)=T^{-1}x+\mathfrak N,\]
\[T_1^\mathfrak N(x+\mathfrak N)=T_1x+\mathfrak N.\]
Therefore, for every $x+\mathfrak N\in X/\mathfrak N$,
\begin{equation}\label{6.4}(T^{-1})^\mathfrak NT_1^\mathfrak N(x+\mathfrak N)=T^{-1}T_1x+\mathfrak N.\end{equation}
This shows that $(T^{-1})^\mathfrak NT_1^\mathfrak N=I_{X/\mathfrak N}$.

(3) Since  $(T^{-1})^\mathfrak N(x_0+\mathfrak N)=x_1+\mathfrak N=\mathfrak N$,  $0\neq x_0+\mathfrak N\in\mbox{ker}(T^{-1})^\mathfrak N$. Note that
\[
T_1^{\mathfrak N}(x_0+{\mathfrak N})=Tx_0+\alpha x_0+{\mathfrak N}.
\]
Then $[x_0+{\mathfrak N}, T_1^{\mathfrak N}(x_0+{\mathfrak N})]=[x_0+{\mathfrak N}, Tx_0+{\mathfrak N}]$. By induction, we know that for all $k\geq 1$,
\[
[x_0+{\mathfrak N}, T_1^{\mathfrak N}(x_0+{\mathfrak N}), \cdots, \left(T_1^{\mathfrak N}\right)^k(x_0+{\mathfrak N})]\]
\[=[x_0+{\mathfrak N}, Tx_0+{\mathfrak N}, \cdots, T^kx_0+{\mathfrak N}].
\]
 Since $T$ is transitive, $x_0+{\mathfrak N}$ is a cyclic vector of $ T_1^{\mathfrak N}$.
By Lemma \ref{3.13}, there is a maximal connected open set $\Omega_{\rm max}\subset\mathbb C$ containing the origin such that $(T^{-1})^\mathfrak N\in B_1(\Omega_{\rm max})$

\end{proof}

\begin{lemma}\label{5.5}
Suppose that $T\in\mathfrak B(X)$ is  invertible transitive  and $0\neq x_0$ in $ X$, and that  the following three conditions hold.

(1) $\sigma(T^{-1}|_\mathfrak N)^\land\bigcap \rho_\text{F}(T^{-1}|_\mathfrak N)$ has a unique connected component;

(2) ${\rm int}\sigma(T^{-1})^\land$ admits two bounded connected components $\Omega_0$ and $\Omega_1$ such that $0\in\Omega_0$ and $\Omega_1\bigcap\rho(T^{-1})\neq\emptyset;$

(3) $\sigma(T^{-1})\subset\sigma((T^{-1})^\mathfrak N)$.

\noindent
Let $\Omega=\Omega_1\bigcap\rho(T^{-1})$. Then

i)  $\Omega\bigcap\sigma(T^{-1}|_\mathfrak N)^\land=\emptyset$;

ii) $\Omega\subset\rho((T^{-1})^\mathfrak N)$ and $\partial\Omega\subset\sigma((T^{-1})^\mathfrak N)$.

iii) $\lambda\in\Omega^{-1}$ implies that $\ker(T_1^\mathfrak N-\lambda)=\{0\}$, where $T_1=T-x_0\otimes x^*_1$ is defined in Lemma \ref{5.3}.

\end{lemma}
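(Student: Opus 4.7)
The plan is to establish (i) by a topological argument that pins down the unique component of $\sigma(U)^\land\cap\rho_F(U)$ inside $\Omega_0$; then derive (ii) from (i) together with the three-operator property of Lemma \ref{5.2}; and finally reduce (iii) to a one-line computation using Lemma \ref{5.3}. Throughout I will write $U=T^{-1}|_{\mathfrak N}$.

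For (i), I would first note that $0\in\sigma(U)$ because $\mathrm{ran}\,U=[x_k]_{k\geq 2}\subsetneq\mathfrak N$ by Lemma \ref{4.10'}(i), while $0\in\rho(T^{-1})\subseteq\rho_F(U)$ by Lemma \ref{2.8}; hence the unique connected component $\Omega^U$ of $\sigma(U)^\land\cap\rho_F(U)$ promised by hypothesis (1) contains the origin. By Lemma \ref{4.13}(1), $\mathrm{ind}(U-\mu)=-1$ for every $\mu\in\Omega^U$. This already forces $\sigma(U)=\sigma(U)^\land$: any bounded component $H$ of $\mathbb C\setminus\sigma(U)$ would sit inside $\sigma(U)^\land\cap\rho_F(U)$, and therefore inside $\Omega^U$ by uniqueness, yet a point of $H\subseteq\rho(U)$ is Fredholm of index $0$, contradicting constancy of the index on $\Omega^U$. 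Next, because $\rho_F(U)$ is open and the index is locally constant, a neighbourhood of any $\mu\in\Omega^U$ consists of Fredholm points of index $-1$, none of which can be invertible; thus every such neighbourhood lies in $\sigma(U)$, giving $\Omega^U\subseteq\mathrm{int}\,\sigma(U)\subseteq\mathrm{int}\,\sigma(T^{-1})^\land$. Since $\Omega^U$ is connected and contains $0\in\Omega_0$, $\Omega^U$ lies inside the connected component $\Omega_0$ of $\mathrm{int}\,\sigma(T^{-1})^\land$. Because $\Omega\subseteq\Omega_1$ is disjoint from $\Omega_0$ we obtain $\Omega\cap\Omega^U=\emptyset$; combined with $\Omega\cap\sigma_e(U)=\emptyset$ (as $\Omega\subseteq\rho(T^{-1})\subseteq\rho_F(U)$) this yields $\Omega\cap\sigma(U)^\land=\emptyset$.

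For (ii), item (i) together with $\sigma(U)=\sigma(U)^\land$ gives $\Omega\subseteq\rho(U)$. Combined with $\Omega\subseteq\rho(T^{-1})$, the three-operator property of Lemma \ref{5.2} applied to $T^{-1}$, $U=T^{-1}|_{\mathfrak N}$ and $(T^{-1})^{\mathfrak N}$ yields $\Omega\subseteq\rho((T^{-1})^{\mathfrak N})$. For the boundary statement, write $\Omega=\Omega_1\setminus\sigma(T^{-1})$ and note that $\partial\Omega\subseteq\partial\Omega_1\cup\sigma(T^{-1})$; since $\Omega_1$ is a connected component of the open set $\mathrm{int}\,\sigma(T^{-1})^\land$, its boundary lies in $\sigma(T^{-1})^\land\setminus\mathrm{int}\,\sigma(T^{-1})^\land\subseteq\sigma(T^{-1})$. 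Hence $\partial\Omega\subseteq\sigma(T^{-1})\subseteq\sigma((T^{-1})^{\mathfrak N})$ by hypothesis (3).

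Finally, (iii) drops out of Lemma \ref{5.3}(2), which asserts $(T^{-1})^{\mathfrak N}T_1^{\mathfrak N}=I_{X/\mathfrak N}$. For $\lambda\in\Omega^{-1}$, suppose $(T_1^{\mathfrak N}-\lambda)y=0$; applying $(T^{-1})^{\mathfrak N}$ yields $y=\lambda(T^{-1})^{\mathfrak N}y$, equivalently $((T^{-1})^{\mathfrak N}-1/\lambda)y=0$. Since $1/\lambda\in\Omega\subseteq\rho((T^{-1})^{\mathfrak N})$ by (ii), injectivity of $(T^{-1})^{\mathfrak N}-1/\lambda$ forces $y=0$. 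I expect the main obstacle to lie in step (i), where the purely spectral hypothesis (1) must be converted into the geometric containment $\Omega^U\subseteq\Omega_0$; the decisive tool is the continuity of the Fredholm index together with the fact that no index-$(-1)$ Fredholm operator is invertible, which simultaneously rules out bounded holes in $\sigma(U)$ and traps $\Omega^U$ inside $\mathrm{int}\,\sigma(U)$.
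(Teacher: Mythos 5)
Your proof is correct and follows essentially the same route as the paper's: part (i) via the containment $\sigma(T^{-1}|_{\mathfrak N})^\land\subseteq\sigma(T^{-1})^\land$ and the localization of the unique component of $\sigma(U)^\land\cap\rho_F(U)$ inside $\Omega_0$, part (ii) via the three-operator property of Lemma \ref{5.2} together with hypothesis (3), and part (iii) via the relation $(T^{-1})^{\mathfrak N}T_1^{\mathfrak N}=I_{X/\mathfrak N}$. If anything, your treatment of (i) is more careful than the paper's (you justify $\Omega^U\subseteq\Omega_0$ by the index argument, which the paper leaves implicit), and your (iii) is a slightly cleaner equivalent of the paper's computation, working directly in the quotient and invoking $\tfrac{1}{\lambda}\in\rho((T^{-1})^{\mathfrak N})$ from (ii) rather than lifting back to $X$ and using $\tfrac{1}{\lambda}\in\rho(T^{-1})\cap\rho(T^{-1}|_{\mathfrak N})$.
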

\begin{proof}
 i)\; By Lemma~\ref{2.8}, $\sigma(T^{-1}|_\mathfrak N)\subset\sigma(T^{-1})^\land$.
Therefore,\begin{equation}\label{5.7}
\sigma(T^{-1}|_\mathfrak N)^\land\subseteq \sigma(T^{-1})^\land.
\end{equation}
Assume that $\Omega_0^\prime$ is the unique connected component of $\sigma(T^{-1}|_\mathfrak N)^\land\bigcap \rho_\text{F}(T^{-1}|_\mathfrak N)$ containing the origin. Since
\[
\sigma(T^{-1}|_\mathfrak N)^\land\subseteq \sigma(T^{-1})^\land,
\]
$\Omega_0^\prime$ is a subset of $\Omega_0$. Therefore, $\Omega_0^\prime\bigcap\Omega=\emptyset$ follows from $\Omega_0\bigcap \Omega=\emptyset$. Thus, i) is shown.

ii)\;By i), $\Omega\subset\rho(T^{-1}|_\mathfrak N)$. Therefore, $\partial\Omega\subset\sigma((T^{-1})^\mathfrak N)$ follows from Lemma \ref{5.2}  and (3).
 $\sigma(T^{-1})\subset\sigma((T^{-1})^\mathfrak N)$, $\Omega\subset\rho(T^{-1})$,  and Lemma \ref{5.2}  together deduce $\partial\Omega\subset\sigma((T^{-1})^\mathfrak N)$ and $\Omega\subset\rho((T^{-1})^\mathfrak N)$.

iii) \; Let $\lambda\in\Omega^{-1}$ and  $z+\mathfrak N\in\ker(T_1^\mathfrak N-\lambda)$ for some $z\in X$. Then by definition of $T_1^\mathfrak N$, we obtain
\[(T-\lambda)z-\langle x^*_1,z\rangle x_0\in \mathfrak N.\]
Recall that $x_1=T^{-1}x_0$.
Then
\[
T^{-1}\big((T-\lambda)z-\langle x^*_1,z\rangle x_0\big)=T^{-1}(T-\lambda)z-\langle x^*_1,z\rangle x_1\in\mathfrak N.
\]
This and $x_1\in\mathfrak N$ imply that $T^{-1}(T-\lambda)z\in \mathfrak N$. Consequently, $(\frac{1}{\lambda}-T^{-1})z\in \mathfrak N$. By i), $\frac{1}{\lambda}\in \rho(T^{-1})$, $\frac{1}{\lambda}\in \rho(T^{-1}|_\mathfrak N)$. There exits  $z'\in \mathfrak N$ such that
\[
(\frac{1}{\lambda}-T^{-1})z=(\frac{1}{\lambda}-T^{-1})z'.
\]
This entails that $(\frac{1}{\lambda}-T^{-1})(z-z')=0$, and further that  $z=z'\in \mathfrak N$. Hence, $\ker(T_1^\mathfrak N-\lambda)=\{0\}$.
\end{proof}
\section{Proof of Theorem 1.5}
We restate Theorem \ref{1.7} as follows.
\begin{theorem}\label{5.8}
Let $T\in \mathfrak B(X)$ be invertible and ${\rm int}\sigma(T^{-1})^\land$ contain  two components $\Omega_0$,  $\Omega_1$ with $0\in\Omega_0$  and $\Omega_1\bigcap\rho(T^{-1})\neq\emptyset$. If $T^{-1}$ admits a nonzero noncyclic vector  $x_0\in X$ such that $\sigma(T^{-1})\subset\sigma((T^{-1})^\mathfrak N)$, then $T$ is intransitive.
\end{theorem}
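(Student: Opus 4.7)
I argue by contradiction: suppose $T$ is transitive. Since $x_0$ is noncyclic for $T^{-1}$, Lemma~\ref{4.2} gives $\mathfrak N = [T^{-k}x_0]_{k\geq 1} \neq X$, so Lemmas~\ref{4.3} and~\ref{4.10'} apply, furnishing the biorthogonal system and the four operators $U, V, U^\dag, V^\dag$ on $\mathfrak N$ and $\mathfrak N^\dag$. On the quotient side, Lemma~\ref{5.3} produces the operators $A := (T^{-1})^\mathfrak N$ and $B := T_1^\mathfrak N$ on $X/\mathfrak N$ with $AB = I_{X/\mathfrak N}$, $A \in B_1(\Omega_{\max})$ for some maximal connected open $\Omega_{\max} \ni 0$, and $x_0 + \mathfrak N$ a cyclic vector for $B$. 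I split according to the structure of $\sigma(U)^\land \cap \rho_F(U)$: if this set has a connected component not containing the origin, then Theorem~\ref{4.14} applies directly to yield $T$ intransitive, contradicting the standing hypothesis. So we may assume $\sigma(U)^\land \cap \rho_F(U)$ has a unique connected component (necessarily containing $0$), which is exactly hypothesis~(1) of Lemma~\ref{5.5}; the hypotheses of Theorem~\ref{5.8} provide conditions~(2) and~(3). Setting $\Omega := \Omega_1 \cap \rho(T^{-1})$, Lemma~\ref{5.5} then yields $\Omega \subset \rho(A)$, $\partial\Omega \subset \sigma(A)$, and $\ker(B - \mu) = \{0\}$ for every $\mu \in \Omega^{-1}$.

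The remaining stage mirrors the closing argument of Theorem~\ref{4.14}, now transplanted to the quotient pair $(A, B)$, which on $X/\mathfrak N$ plays the same structural role as $(V, U)$ on $\mathfrak N$: $A$ is the Cowen--Douglas operator of index $+1$, and $B$ the index $-1$ cyclic operator satisfying the factorization $AB = I_{X/\mathfrak N}$. The key identity is $A(B - \mu) = I - \mu A = -\mu(A - 1/\mu)$, which for $\mu \in \Omega^{-1}$ shows that $A(B - \mu)$ is invertible (because $1/\mu \in \Omega \subset \rho(A)$). Starting from the cyclic orbit $\{B^n(x_0 + \mathfrak N)\}_{n\ge 0}$, one builds a biorthogonal system in $X/\mathfrak N \times (X/\mathfrak N)^*$ by the argument of Lemma~\ref{4.3}, defines $(X/\mathfrak N)^\dag$ to be the norm closure of the dual sequence in $(X/\mathfrak N)^*$, and introduces $A^\dag := A^*|_{(X/\mathfrak N)^\dag}$, $B^\dag := B^*|_{(X/\mathfrak N)^\dag}$. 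The quotient analogs of Lemmas~\ref{4.9}, \ref{4.11}, and~\ref{4.13}, together with Proposition~\ref{3.12} applied to the cyclic operator $B$ and the $w^*$-lifting property Theorem~\ref{4.8}, establish that $B - \mu$ is invertible on the relevant subset associated with $\Omega^{-1}$. Composing the invertibility of $B - \mu$ with that of $A(B - \mu)$ then forces $A$ itself to be invertible, contradicting $A \in B_1(\Omega_{\max})$ (which guarantees $\dim\ker A = 1$).

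The principal obstacle is constructing the dual infrastructure on the quotient side in parallel with $\mathfrak N^\dag \subset \mathfrak N^*$ from Lemma~\ref{4.10'}: one must verify that the cyclic structure of $B$ produces a suitable biorthogonal sequence, that Theorem~\ref{4.8}'s $w^*$-extension yields $\sigma(A^\dag) = \sigma(A^*)$, and that the essential-spectrum identities $\sigma_e(A^\dag) = \sigma_e(A^*)$ and $\sigma_e(B^\dag) = \{1/\lambda : \lambda \in \sigma_e(A^\dag)\}$ transfer to the quotient setting. Once this machinery is in place, the spectral identity at the heart of the proof of Theorem~\ref{4.14} closes the argument exactly as in the restriction case, completing the proof.
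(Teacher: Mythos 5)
Your proposal follows the paper's own proof essentially step for step: the same contradiction setup, the same case split via Theorem~\ref{4.14} on the components of $\sigma(U)^\land\cap\rho_F(U)$, the same appeal to Lemmas~\ref{5.3} and~\ref{5.5}, and the same closing identity $A(B-\mu)=-\mu(A-\tfrac{1}{\mu})$ forcing $A=(T^{-1})^\mathfrak N$ to be invertible against $A\in B_1(\Omega_{\rm max})$. The only divergence is cosmetic: the dual infrastructure $(X/\mathfrak N)^\dag$, $A^\dag$, $B^\dag$ you propose to build is not needed, since the paper obtains invertibility of $B-\mu$ directly from Proposition~\ref{3.12} (giving index zero) combined with Lemma~\ref{5.5}~iii) (giving trivial kernel), both of which you already cite.
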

\begin{proof}
Suppose, to the contrary, that $T$ is transitive. Since $0\neq x_0\in X$ is a noncyclic vector, $\mathfrak N=[T^{-n}x_0]_{n\geq1}\neq X$. By Lemma~\ref{3.9} and (3) in Lemma \ref{5.3}, there is a maximal connected open set $\Omega_{\rm max}\subset\mathbb C$ containing the origin  such that $$(T^{-1})^\mathfrak N\in B_1(\Omega_{\rm max}).$$ It follows from $\sigma(T^{-1})\subset\sigma((T^{-1})^\mathfrak N)$ and Lemma \ref{5.1} that
 $\Omega_{\rm max}\subset\Omega_0$. If  $\sigma(T^{-1}|_\mathfrak N)^\land\bigcap\rho_{\rm F}(T^{-1}|_\mathfrak N)$ contains two connected components, then by Theorem \ref{4.14}, $T$ is intransitive.

 Now, assume that  $\sigma(T^{-1}|_\mathfrak N)^\land\bigcap\rho_{\rm F}(T^{-1}|_\mathfrak N)$ contains just one connected component $\Omega=\Omega_0\bigcap\rho(T^{-1})$.
Since $\Omega_1\bigcap\rho(T^{-1})\neq\emptyset$, by Lemma \ref{5.5}, we obtain $\Omega_1\subset \rho\big((T^{-1})^\mathfrak N\big)$.  It follows from Lemma \ref{5.3} and Proposition \ref{3.10} that
\[(T^{-1})^\mathfrak NT_1^\mathfrak N=I_{X/\mathfrak N},\;\;{\rm and\;}\; C(T_1^\mathfrak N)\neq\emptyset.\]
Applying Propositions \ref{3.11}  and  \ref{3.12}, we obtain  $\Omega^{-1}=\rho_F^{-1}(T_1^\mathfrak N)$ and $${\rm ind}(T_1^\mathfrak N-\lambda)=0, \quad\lambda\in \Omega_1^{-1}.$$ By Lemma~\ref{5.5} iii), ${\rm dimker}(T_1^\mathfrak N-\lambda)=0$. Hence, $\Omega_1^{-1}\subset\rho(T_1^\mathfrak N)$.
Since $\sigma(T^{-1})\subset\sigma((T^{-1})^\mathfrak N)$, $\Omega_1\subseteq \rho(T^{-1})$. Since  $\sigma(T^{-1}|_\mathfrak N)^\land\bigcap\rho_{\rm F}(T^{-1}|_\mathfrak N)$ contains just one connected component $\Omega=\Omega_0\bigcap\rho(T^{-1})$, $\Omega_1\subseteq \rho(T^{-1}|_\mathfrak N)$. By Lemma~\ref{5.2}, $\Omega_1\subseteq \rho((T^{-1})^\mathfrak N)$.
Therefore, for each $\lambda\in\Omega_1^{-1}$,
\[(T^{-1})^\mathfrak N(T_1^\mathfrak N-\lambda)=I-\lambda(T^{-1})^\mathfrak N=\lambda\Big(\frac{1}{\lambda}- (T^{-1})^\mathfrak N\Big).\]
This entails that $(T^{-1})^\mathfrak N(T_1^\mathfrak N-\lambda)$ is invertible, which contradicts to that $(T^{-1})^\mathfrak N\in B_1(\Omega)$ so that $(T^{-1})^\mathfrak N$ is not invertible.
\end{proof}
\section{  Properties A and C}
 For the study related  the  single  valued  extension of the  resolvent $(T-\lambda)^{-1}$ of an operator $T\in\mathfrak B(X)$, N. Dunford \cite{Dun} introduced the following properties (A) and (C).
 \begin{definition}\label{7.1}  Let $T\in \mathcal{B}(X)$.

i)\;$T\in \mathcal{B}(X)$ is said to have Property ($A$) if for all $x\in X$, $(T-\lambda)^{-1}x$ has a largest single valued analytic extension $\tilde{x}(\lambda)$. The domain of $\tilde{x}(\lambda)$, denoted by $\rho_T(x)$, is called the local resolvent of $T$ at $x$; and the local spectrum of $T$ at $x$ is defined by $\sigma_T(x)=\mathbb C\setminus \rho_T(x)$.

ii)\; We say that $T$ has Dunford's Property $C$ if $T$ has Property ($A$) and for every closed subset $F\subseteq \mathbb C$,
\[
M(T,F)\equiv \{x:\,\sigma_T(x)\subseteq F\}
\]
is an invariant subspace of $T$.
\end{definition}

The family of operators admitting Dunford's Property ($C$) is a large class of $\mathcal{B}(X)$. For example, it contains the classes of  generalized spectral operators and of decomposable operators ; and of normal, seminormal and hypo-normal operators (an operator $T$ is called hypo-normal if $T^*T-TT^*\geq 0$), and of $N$-class operators (an operator $T$ is called $N-$class operator if $\|T^N\xi\|^2\leq \|T^{2N}\xi\|\cdot \|\xi\|$ for all $\xi \in \mathcal{H}$) \cite[Theorem 2]{li} in a separable Hilbert space.
The following property is due to Dunford \cite{Dun}.
\begin{proposition}\label{7.2}
Suppose that $T\in \mathcal{B}(X)$ has Property ($A$). Then
\begin{enumerate}
\item $\sigma_T(\alpha x+\beta y)\subseteq \sigma_T(x)\bigcup\sigma_T(y)$ for $x,y\in X$ and $\alpha,\beta\in \mathbb C$;
\item $\sigma_T(x)=\emptyset$ if and only if $x=0$;
\item $\sigma_T(Ax)\subseteq \sigma_T(x)$, where $A\in \mathcal{B}(X)$ with $AT=TA$ and $x\in X$.
\end{enumerate}
\end{proposition}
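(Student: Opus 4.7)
The plan is to exploit directly the definition of the largest single-valued analytic extension $\tilde{x}(\lambda)$ of $(T-\lambda)^{-1}x$. Throughout, the key identity to use is that on $\rho_T(x)$ one has $(T-\lambda)\tilde{x}(\lambda)=x$, and whenever two analytic continuations of $(T-\lambda)^{-1}z$ agree on a set with an accumulation point they must coincide on the intersection of their domains (by the identity principle). This is what gives the definition its maximality and what makes the three statements essentially bookkeeping exercises.

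For part (1), I would take $\lambda_0\in\rho_T(x)\cap\rho_T(y)$ and form the function $\alpha\tilde{x}(\mu)+\beta\tilde{y}(\mu)$ in a neighborhood of $\lambda_0$. This is analytic and satisfies $(T-\mu)\bigl(\alpha\tilde{x}(\mu)+\beta\tilde{y}(\mu)\bigr)=\alpha x+\beta y$, hence extends $(T-\mu)^{-1}(\alpha x+\beta y)$ locally; by maximality of the extension for $\alpha x+\beta y$, we get $\lambda_0\in\rho_T(\alpha x+\beta y)$. Passing to complements yields the desired inclusion.

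For part (2), the direction $x=0\Rightarrow\sigma_T(x)=\emptyset$ is immediate because $\tilde{x}\equiv 0$ is entire. The nontrivial direction is the main obstacle and requires a Liouville-type argument for vector-valued analytic functions. Assuming $\sigma_T(x)=\emptyset$, the extension $\tilde{x}$ is entire on $\mathbb{C}$. On the unbounded component of the resolvent one has the Neumann expansion $\tilde{x}(\lambda)=-\sum_{n\ge 0}T^{n}x/\lambda^{n+1}$, so $\|\tilde{x}(\lambda)\|\to 0$ as $|\lambda|\to\infty$. Applying scalar Liouville to $\langle x^{*},\tilde{x}(\cdot)\rangle$ for every $x^{*}\in X^{*}$ forces $\tilde{x}\equiv 0$, and then $x=(T-\lambda)\tilde{x}(\lambda)=0$. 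The delicate point here is justifying that the Neumann expansion really matches $\tilde{x}$ on the unbounded component, which follows by uniqueness of analytic continuation once they agree on a neighborhood of $\infty$.

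For part (3), assuming $AT=TA$ and $\lambda_0\in\rho_T(x)$, the composition $A\tilde{x}(\mu)$ is analytic near $\lambda_0$, and the commutation relation gives $(T-\mu)\bigl(A\tilde{x}(\mu)\bigr)=A(T-\mu)\tilde{x}(\mu)=Ax$. Thus $A\tilde{x}(\mu)$ is a local single-valued analytic extension of $(T-\mu)^{-1}Ax$ at $\lambda_0$, so $\lambda_0\in\rho_T(Ax)$, and passing to complements gives $\sigma_T(Ax)\subseteq\sigma_T(x)$. None of the three parts requires anything beyond the definition plus one application of Liouville, so the only real subtlety is the uniqueness/maximality argument underpinning part (2).
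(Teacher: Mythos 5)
Your proposal is correct. Note, however, that the paper does not prove Proposition \ref{7.2} at all: it is stated as a known result attributed to Dunford \cite{Dun}, so there is no in-paper argument to compare against. Your three arguments are the standard ones from local spectral theory and are sound: parts (1) and (3) are indeed pure bookkeeping with the defining identity $(T-\mu)\tilde{x}(\mu)=x$ on $\rho_T(x)$ together with the maximality of the single-valued extension guaranteed by Property ($A$), and part (2) is the usual Liouville argument, where the decay at infinity comes from identifying $\tilde{x}$ with the Neumann series $-\sum_{n\ge 0}T^nx/\lambda^{n+1}$ on $\{|\lambda|>\|T\|\}$ (which holds by definition, since $\tilde{x}$ extends $(T-\lambda)^{-1}x$ on $\rho(T)$, not merely by a continuation argument). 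The only point worth making explicit is that the identity $(T-\lambda)\tilde{x}(\lambda)=x$ holds on \emph{all} of $\rho_T(x)$, including components disjoint from $\rho(T)$; this is automatic if one takes $\rho_T(x)$ to be the set of points admitting a local analytic solution of $(T-\mu)f(\mu)=x$ (the standard reading of Property ($A$)), and you implicitly rely on this when you conclude $x=(T-\lambda)\tilde{x}(\lambda)=0$ in part (2) and when you verify the equation for $A\tilde{x}$ in part (3).
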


R.C. Sine \cite{Si} showed the following result.
\begin{proposition}\label{7.3}
Suppose that $T\in \mathcal{B}(X)$ has Property ($A$). Then $$\sigma(T)=\bigcup_{x\in X}\sigma_T(x).$$
\end{proposition}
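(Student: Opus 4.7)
\textbf{Proof proposal for Proposition \ref{7.3}.} The plan is to prove two inclusions separately. The inclusion $\bigcup_{x\in X}\sigma_T(x)\subseteq\sigma(T)$ is immediate: for any $x\in X$, the global resolvent $\mu\mapsto (T-\mu)^{-1}x$ is itself an analytic extension of $(T-\mu)^{-1}x$ defined on $\rho(T)$, so by maximality of the local resolvent set, $\rho(T)\subseteq\rho_T(x)$, and the inclusion follows upon taking complements and unioning over $x$.

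The reverse inclusion $\sigma(T)\subseteq\bigcup_{x\in X}\sigma_T(x)$ is the substantive part. I will prove the contrapositive: assume $\lambda\in\rho_T(x)$ for every $x\in X$ and build the bounded inverse of $T-\lambda$ explicitly. Define $R:X\to X$ by $R(x)=\widetilde{x}(\lambda)$, where $\widetilde{x}$ is the maximal single valued analytic extension of $(T-\mu)^{-1}x$ guaranteed by Property $(A)$; this is well-defined by uniqueness of the maximal extension. Linearity of $R$ follows from Proposition \ref{7.2}(1): given $x,y\in X$ and scalars $\alpha,\beta$, the function $\alpha\widetilde{x}+\beta\widetilde{y}$ is an analytic extension of $(T-\mu)^{-1}(\alpha x+\beta y)$ on $\rho_T(x)\cap\rho_T(y)\ni\lambda$, so by uniqueness it agrees with $\widetilde{\alpha x+\beta y}$ at $\lambda$. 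Evaluating the identity $(T-\mu)\widetilde{x}(\mu)=x$ (which extends from $\rho(T)$ to $\rho_T(x)$ by analytic continuation) at $\mu=\lambda$ yields $(T-\lambda)R(x)=x$, so $T-\lambda$ is surjective.

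The two remaining steps are injectivity of $T-\lambda$ and boundedness of $R$. For injectivity, suppose $0\neq z\in\ker(T-\lambda)$; then for $\mu$ near $\lambda$ in $\rho(T)$ one checks directly that $(T-\mu)^{-1}z=-z/(\mu-\lambda)$, and the unique maximal analytic extension $\widetilde{z}(\mu)=-z/(\mu-\lambda)$ has a pole at $\lambda$. Hence $\lambda\notin\rho_T(z)$, i.e., $\lambda\in\sigma_T(z)\subseteq\bigcup_{x\in X}\sigma_T(x)$, contradicting our standing assumption. Boundedness then follows via the closed graph theorem: if $x_n\to x$ and $R(x_n)\to y$, then $(T-\lambda)R(x_n)=x_n$ and continuity of $T-\lambda$ give $(T-\lambda)y=x=(T-\lambda)R(x)$, whence $y=R(x)$ by the injectivity just established. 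Therefore $R=(T-\lambda)^{-1}\in\mathfrak B(X)$ and $\lambda\in\rho(T)$. The main technical hurdle is the pair injectivity$+$closed graph, since the construction of $R$ only produces a right inverse a priori; the Laurent-type expansion $-z/(\mu-\lambda)$ is the pivotal observation that converts nontrivial kernel into a singularity of the local resolvent, closing the argument.
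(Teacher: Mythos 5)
Your proof is correct, but note that the paper does not actually prove Proposition \ref{7.3}: it is quoted as a known result of R.C. Sine \cite{Si}, so there is no in-paper argument to compare against. Your argument is the standard self-contained one and is sound: the easy inclusion $\bigcup_{x}\sigma_T(x)\subseteq\sigma(T)$ follows from maximality of the local resolvent; surjectivity of $T-\lambda$ from the identity $x=(T-\lambda)\tilde{x}(\lambda)$; injectivity from the observation that a nonzero $z\in\ker(T-\lambda)$ has local resolvent $-z/(\mu-\lambda)$ with a pole at $\lambda$; and boundedness from the closed graph theorem (once bijectivity is in hand, the bounded inverse theorem would also finish immediately). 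One small imprecision in the injectivity step: you verify $(T-\mu)^{-1}z=-z/(\mu-\lambda)$ ``for $\mu$ near $\lambda$ in $\rho(T)$,'' but $\lambda$ may lie in the interior of $\sigma(T)$, in which case no such $\mu$ exists. The fix is immediate and does not change the structure of your argument: the function $\mu\mapsto -z/(\mu-\lambda)$ satisfies $(T-\mu)\bigl(-z/(\mu-\lambda)\bigr)=z$ on all of $\mathbb{C}\setminus\{\lambda\}$ by direct computation, so by Property ($A$) (uniqueness of analytic solutions of $(T-\mu)f(\mu)=z$ on a connected open set) it coincides with $\tilde{z}$ there, and its unboundedness near $\lambda$ still rules out $\lambda\in\rho_T(z)$, indeed giving $\sigma_T(z)=\{\lambda\}$.
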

Chengzu Zou \cite{Zou} proved the following theorem.
\begin{theorem}\label{7.4}
A sufficient and necessary condition for $T\in \mathcal{B}(X)$ to have Property ($A$) is that there exist a  sequence $\{x_n\}\subset X$ of non-zero vectors, $\rho>0$  and $\lambda_0\in\mathbb C$ such that

i) $\|x_n\|\leq\rho^n,\;\;n=0,1,\cdots;$

ii) $Tx_n=x_{n-1}+\lambda_0x_n$, where $x_{-1}=0$.
\end{theorem}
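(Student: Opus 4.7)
My plan rests on one technical identity: given a sequence $\{x_n\}$, a point $\lambda_0$, and $\rho>0$ satisfying (i) and (ii), the $X$-valued power series
\[
f(\lambda)\;=\;\sum_{n=0}^{\infty}x_n(\lambda-\lambda_0)^n
\]
converges on the open disc $D=\{|\lambda-\lambda_0|<1/\rho\}$ and satisfies $(T-\lambda)f(\lambda)\equiv 0$ on $D$. Convergence is immediate from (i) by comparison with the geometric series, and the vanishing identity follows by substituting $Tx_n=x_{n-1}+\lambda_0 x_n$ into the term-by-term expansion of $(T-\lambda)f(\lambda)=\sum_{n\ge 0}(Tx_n-\lambda x_n)(\lambda-\lambda_0)^n$ and telescoping via a shift of the summation index. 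This single identity drives both directions of the equivalence.

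For the sufficiency direction, I would use $f$ as the engine for producing the largest single-valued analytic extension $\tilde x(\lambda)$ of $(T-\lambda)^{-1}x$ required by Property~(A). Starting from the ordinary resolvent $R(\lambda)x=(T-\lambda)^{-1}x$ on $\rho(T)$, whenever the continuation crosses a boundary point of $\rho(T)$ lying in $D$, the identity $(T-\lambda)f(\lambda)=0$ permits one to adjust branches by scalar multiples of $f$ and enforce single-valuedness; iterating produces a maximal single-valued analytic function $\tilde x(\lambda)$ with $(T-\lambda)\tilde x(\lambda)=x$ on its domain, and the identity theorem delivers uniqueness and maximality of the extension.

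For the necessity direction, assuming Property~(A), I would select a vector $x\in X$ with $\sigma_T(x)\neq\emptyset$ (which exists by Proposition~\ref{7.2}) and $\lambda_0\in\partial\rho_T(x)$. The Taylor expansion $\tilde x(\lambda)=\sum_{n\ge 0} y_n(\lambda-\lambda_0)^n$ of the maximal extension on a one-sided disc inside $\rho_T(x)$, inserted into $(T-\lambda)\tilde x(\lambda)=x$, yields $(T-\lambda_0)y_0=x$ together with $(T-\lambda_0)y_n=y_{n-1}$ for $n\geq1$. Subtracting a particular solution at index $0$ and relabeling indices converts this to the homogeneous recursion (ii), and Cauchy estimates applied to $\tilde x(\lambda)$ on a strictly smaller disc interior to the domain of convergence furnish a constant $\rho>0$ with $\|x_n\|\le \rho^n$.

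The main obstacle is the necessity direction: extracting a chain of \emph{non-zero} vectors obeying the purely homogeneous recurrence (ii) from the inhomogeneous Taylor data $(T-\lambda)\tilde x(\lambda)=x$, and ensuring no term of the chain vanishes. The decisive input is that $\lambda_0$ must be chosen as a genuine boundary singularity of the maximal extension rather than an interior regular point, so that the obstruction to further analytic continuation at $\lambda_0$ is encoded in an infinite tail of non-vanishing Taylor coefficients; otherwise, the chain could terminate after finitely many steps and fail to produce the required infinite sequence of non-zero vectors.
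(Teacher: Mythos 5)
The paper itself gives no proof of Theorem~\ref{7.4}: it is imported verbatim from Zou's paper \cite{Zou}, so there is no internal argument to compare yours against. Judged on its own terms, your proposal is built on a polarity error. Your opening identity is correct and worth keeping: if nonzero vectors $x_n$ satisfy (i) and (ii), then $f(\lambda)=\sum_{n\ge 0}x_n(\lambda-\lambda_0)^n$ converges for $|\lambda-\lambda_0|<1/\rho$ and $(T-\lambda)f(\lambda)\equiv 0$ with $f\not\equiv 0$. But this identity \emph{destroys} Property (A) rather than establishing it: for any $x$ and any local analytic solution $g$ of $(T-\lambda)g(\lambda)=x$ near $\lambda_0$, the function $g+cf$ is another solution for every scalar $c$, so $(T-\lambda)^{-1}x$ admits no single-valued (i.e.\ unique) analytic extension through $\lambda_0$. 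Your sufficiency step --- ``adjusting branches by scalar multiples of $f$ to enforce single-valuedness'' --- does exactly the opposite of what is needed; adding multiples of $f$ is what creates the multivaluedness. The existence of such a chain (note $x_0\in\ker(T-\lambda_0)\setminus\{0\}$ and $x_n\in\ker(T-\lambda_0)^{n+1}$) is the classical characterization of the \emph{failure} of the single-valued extension property at $\lambda_0$; the statement as printed evidently carries a misprint (a missing negation), and this is the only reading under which the paper's application in Lemma~\ref{7.5} is coherent: a restriction to an invariant subspace cannot create such a chain, hence cannot destroy Property (A).

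Your necessity direction also does not go through as written. One cannot Taylor-expand $\tilde x$ at a boundary point $\lambda_0\in\partial\rho_T(x)$ --- analyticity on a ``one-sided disc'' provides no power series centered at $\lambda_0$ --- and the recursion you would obtain, $(T-\lambda_0)y_0=x$ and $(T-\lambda_0)y_n=y_{n-1}$, is inhomogeneous: there is no ``particular solution'' whose subtraction converts it into (ii), because (ii) forces $x_0$ to be a nonzero eigenvector of $T$ at $\lambda_0$ and your construction produces no such vector (nor should one exist when $T$ has Property (A)). The repair is to prove the contrapositive equivalence. If Property (A) fails, two distinct local extensions differ by a nonzero analytic $f$ on some disc $D$ with $(T-\lambda)f(\lambda)\equiv 0$; re-center at a point $\lambda_0\in D$ where $f(\lambda_0)\neq 0$ and rescale so that $\sup_{|\lambda-\lambda_0|=r}\|f(\lambda)\|\le 1$ for some small $r>0$. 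The Taylor coefficients $x_n=f^{(n)}(\lambda_0)/n!$ then satisfy (ii); they are all nonzero because $x_0\ne 0$ and $x_n=0$ for some $n\ge 1$ would force $x_{n-1}=(T-\lambda_0)x_n=0$ and hence $x_0=0$; and the Cauchy estimates give (i) with $\rho=1/r$. Your identity from the first paragraph then supplies the converse implication.
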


As its application of  Theorem \ref{7.4} and \cite[Proposition 2.21]{LN}, we obtain the following result.
\begin{lemma}\label{7.5}
Suppose that $T\in\mathfrak B(X)$ is transitive invertible,  $T^{-1}$ has Property ($C$), and $M$ is an invariant subspace of $T^{-1}$. Then the restriction $T^{-1}|_M$ admits Property ($A$). Furthermore, $T^{-1}|_M$ has Dunford's property ($C$).
\end{lemma}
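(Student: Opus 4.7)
The plan is to transfer each of the two properties from the ambient operator $T^{-1} \in \mathfrak{B}(X)$ to its restriction $T^{-1}|_M \in \mathfrak{B}(M)$. Property~(A) will be obtained via a contraposition argument using Theorem~\ref{7.4}, and Property~(C) will then follow by invoking \cite[Proposition~2.21]{LN} as an off-the-shelf transfer principle.

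For Property~(A), I argue by contraposition. Suppose $T^{-1}|_M$ fails Property~(A). By Theorem~\ref{7.4} applied to $T^{-1}|_M \in \mathfrak{B}(M)$, there exist a sequence of non-zero vectors $\{x_n\} \subset M$, a constant $\rho > 0$, and a point $\lambda_0 \in \mathbb{C}$ such that
\[
\|x_n\| \leq \rho^n \quad \text{and} \quad (T^{-1}|_M)\,x_n = x_{n-1} + \lambda_0 x_n, \qquad x_{-1} = 0.
\]
Because $M$ is $T^{-1}$-invariant, $T^{-1}|_M$ is the literal restriction of $T^{-1}$ to $M$, so the same sequence viewed inside $X$ satisfies $T^{-1} x_n = x_{n-1} + \lambda_0 x_n$ with the identical geometric bound $\|x_n\| \leq \rho^n$. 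Reapplying Theorem~\ref{7.4} in the opposite direction to $T^{-1} \in \mathfrak{B}(X)$ forces $T^{-1}$ to lack Property~(A). This contradicts the hypothesis that $T^{-1}$ has Dunford's Property~(C), which subsumes Property~(A) by Definition~\ref{7.1}. Hence $T^{-1}|_M$ has Property~(A).

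For Property~(C), once Step~1 secures Property~(A) for $T^{-1}|_M$, the local spectrum $\sigma_{T^{-1}|_M}(x)$ is well defined for each $x \in M$, and the usual uniqueness-of-analytic-continuation argument gives the inclusion $\sigma_{T^{-1}}(x) \subseteq \sigma_{T^{-1}|_M}(x)$ for every $x \in M$. I then invoke \cite[Proposition~2.21]{LN}, the general statement that Dunford's Property~(C) descends from an operator to its restriction on any closed invariant subspace; applied to $T^{-1}$ and its closed invariant subspace $M$, this directly yields Property~(C) for $T^{-1}|_M$.

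The main obstacle lies not in Step~1, which is a one-line symmetric application of the chain characterization, but in Step~2: the closedness of the analytic spectral subspaces $\{x \in M : \sigma_{T^{-1}|_M}(x) \subseteq F\}$ is not automatic from Property~(A) alone, and the inclusion $\sigma_{T^{-1}}(x) \subseteq \sigma_{T^{-1}|_M}(x)$ can be strict. One therefore cannot simply express the spectral subspace of the restriction as the trace on $M$ of the corresponding spectral subspace of $T^{-1}$, which is precisely why the external reference \cite[Proposition~2.21]{LN} is pulled in rather than argued ad hoc.
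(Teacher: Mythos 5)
Your proposal is correct and matches the paper's own route: the paper gives no written proof of this lemma, merely asserting that it follows from Theorem~\ref{7.4} and \cite[Proposition 2.21]{LN}, which is exactly the two-step argument you carry out (the chain characterization to transfer Property (A) to the restriction, and the cited proposition of Laursen--Neumann for Property (C)). The only point worth noting is that you have, correctly, read Theorem~\ref{7.4} as characterizing the \emph{failure} of Property (A) --- the chain yields a nonzero analytic solution of $(T-\lambda)f(\lambda)=0$ --- even though the paper's wording of that theorem says ``to have'' rather than ``not to have.''
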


\begin{lemma}\label{7.7}
Suppose that $T\in\mathfrak B(X)$ is transitive invertible,  $T^{-1}$ has Property ($C$), and $M$ is an invariant subspace of $T^{-1}$. If $\sigma_{T^{-1}}(x)=\sigma(T^{-1})$ for every $x\in X$, then
\begin{equation}\label{7.8} \sigma(T^{-1})\subset\sigma(T^{-1}|_M)\subset\sigma(T^{-1})^\land.
\end{equation}
\end{lemma}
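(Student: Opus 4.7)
The plan is to handle the two inclusions separately. The second inclusion $\sigma(T^{-1}|_M) \subseteq \sigma(T^{-1})^{\wedge}$ is essentially automatic: it is exactly the first half of Lemma \ref{2.8} applied to $S = T^{-1}$ and $E = M$, so no further work is needed there.

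For the first inclusion $\sigma(T^{-1}) \subseteq \sigma(T^{-1}|_M)$, the main tool is the comparison between the local spectra of $T^{-1}$ and its restriction, together with Sine's formula (Proposition \ref{7.3}). First I would verify that $T^{-1}|_M$ has Property ($A$), which is given by Lemma \ref{7.5}, so $\sigma_{T^{-1}|_M}(\cdot)$ is well-defined on $M$. Next I would observe the basic compatibility: for any $x \in M$ and any $\lambda \in \rho_{T^{-1}|_M}(x)$, the local analytic extension $\widetilde{x}(\lambda) \in M$ satisfying $(T^{-1}|_M - \lambda)\widetilde{x}(\lambda) = x$ is, a fortiori, an $X$-valued analytic extension satisfying $(T^{-1} - \lambda)\widetilde{x}(\lambda) = x$. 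By maximality of $\rho_{T^{-1}}(x)$, this gives $\rho_{T^{-1}|_M}(x) \subseteq \rho_{T^{-1}}(x)$, equivalently
\[
\sigma_{T^{-1}}(x) \subseteq \sigma_{T^{-1}|_M}(x), \qquad x \in M.
\]

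Now I would pick any nonzero $x \in M$ (note $M \neq \{0\}$ is implicit, since otherwise $\sigma(T^{-1}|_M)$ is not the object of interest, and $\sigma_{T^{-1}}(0) = \emptyset$ by Proposition \ref{7.2} so the hypothesis is meant for nonzero vectors) and apply the standing assumption $\sigma_{T^{-1}}(x) = \sigma(T^{-1})$ together with the trivial inclusion $\sigma_{T^{-1}|_M}(x) \subseteq \sigma(T^{-1}|_M)$ from Proposition \ref{7.3} to chain
\[
\sigma(T^{-1}) \;=\; \sigma_{T^{-1}}(x) \;\subseteq\; \sigma_{T^{-1}|_M}(x) \;\subseteq\; \sigma(T^{-1}|_M).
\]
Combining this with the inclusion from Lemma \ref{2.8} yields \eqref{7.8}.

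I expect no serious obstacle; the only subtlety is the maximal-extension step that gives $\sigma_{T^{-1}}(x) \subseteq \sigma_{T^{-1}|_M}(x)$ for $x \in M$, and this is a direct consequence of Dunford's definition of Property ($A$) since $M$ is closed and $T^{-1}$-invariant (so the $M$-valued extension really is a valid $X$-valued extension that must be dominated by the maximal one).
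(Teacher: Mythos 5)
Your proposal is correct and follows essentially the same route as the paper: both get the second inclusion from Lemma \ref{2.8}, prove $\sigma_{T^{-1}}(x)\subseteq\sigma_{T^{-1}|_M}(x)$ for $x\in M$ by noting that the $M$-valued local resolvent of $T^{-1}|_M$ at $x$ is also an $X$-valued analytic extension for $T^{-1}$ (hence dominated by the maximal one), and then conclude via the hypothesis $\sigma_{T^{-1}}(x)=\sigma(T^{-1})$ together with Sine's formula. The only cosmetic difference is that you chain the inclusions through a single nonzero $x\in M$ while the paper takes the union over all $x\in M$; the substance is identical.
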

\begin{proof}
Since $T$ is transitive, $\sigma_p(T^{-1}|_M)=\emptyset$. It follows from Lemma \ref{7.5} that the restriction $T^{-1}|_M$ admits Property ($A$) and ($C$).
We first claim
\begin{equation}\label{7.9} \sigma_{T^{-1}}(x)\subset\sigma_{T^{-1}|_M}(x),\;{\rm for\;all\;}x\in M,
\end{equation}
which is equivalent to  $\rho_{T^{-1}|_M}(x)\subset\rho_{T^{-1}}(x).$

Since  $M\in{\rm Lat}(T^{-1})$, $(T^{-1}|_M-\lambda)^{-1}x\in M.$ Therefore,
\[(T^{-1}-\lambda)(T^{-1}|_M-\lambda)^{-1}x=(T^{-1}|_M-\lambda)(T^{-1}|_M-\lambda)^{-1}x=x.\]
Consequently, $\rho_{T^{-1}|_M}(x)\subset\rho_{T^{-1}}(x).$

Next, we claim $\sigma({T^{-1}|_M})\supset\sigma({T^{-1}}).$ Indeed,  since $\sigma_{T^{-1}}(x)=\sigma(T^{-1})$ for every $x\in X$, by Proposition~\ref{7.3},
\[\sigma(T^{-1})=\bigcup_{x\in M}\sigma_{T^{-1}}(x)\subset\bigcup_{x\in M}\sigma_{T^{-1}|_M}(x)=\sigma(T^{-1}|_M). \]
The second inclusion $\sigma(T^{-1}|_M)\subset\sigma(T^{-1})^\land$ in (\ref{7.8}) follows from Lemma \ref{2.8}.
\end{proof}
\section{Proof of Theorem 1.6}

\begin{theorem}\label{7.10}
 Suppose that $T\in \mathfrak B(X)$ is  invertible such  that $T^{-1}$ admits Dunford's Property ($C$),  and there exists a connected component $\Omega$  of $int\sigma(T^{-1})^\land$ off the origin such that $\Omega\cap\rho_F(T^{-1})\neq \emptyset$. If $T^{-1}$ is intransitive, then $T$ is again intransitive.
\end{theorem}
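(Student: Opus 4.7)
I plan to argue by contradiction and reduce to Theorem~\ref{1.5}. Assume $T$ is transitive; pick any $0 \neq x_0$ in a nontrivial closed invariant subspace of $T^{-1}$ (so $x_0 \notin C(T^{-1})$), and set $\mathfrak N = [T^{-k}x_0: k \geq 1]$, $U = T^{-1}|_\mathfrak N$. The goal is to verify that $\sigma(U)^\land \cap \rho_F(U)$ has a connected component off the origin; Theorem~\ref{1.5} will then force $T$ intransitive, contradicting the assumption.

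The first key step is to upgrade Property~(C) to the statement $\sigma_{T^{-1}}(z) = \sigma(T^{-1})$ for every nonzero $z \in X$. Because $T$ is transitive, every nonzero $y$ is $T$-cyclic; applying Proposition~\ref{7.2}(3) to $T^{\pm k}$ (which commute with $T^{-1}$) equates $\sigma_{T^{-1}}(T^k y)$ and $\sigma_{T^{-1}}(y)$, subadditivity (Proposition~\ref{7.2}(1)) extends the bound $\sigma_{T^{-1}}(\cdot) \subseteq \sigma_{T^{-1}}(y)$ to the dense span $\{T^k y\}$, and closedness of $M(T^{-1}, \sigma_{T^{-1}}(y))$ under Property~(C) extends it to all of $X$. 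Symmetry and Proposition~\ref{7.3} then identify the common local spectrum with $\sigma(T^{-1})$. Lemma~\ref{7.7} with $M = \mathfrak N$ now gives $\sigma(T^{-1}) \subseteq \sigma(U) \subseteq \sigma(T^{-1})^\land$, so $\sigma(U)^\land = \sigma(T^{-1})^\land$ and $\Omega$ remains a connected component of ${\rm int}\,\sigma(U)^\land$ off the origin. Finally, Lemma~\ref{4.11}(i) provides $\rho_F(T^{-1}) \subseteq \rho_F(U)$, so $\Omega \cap \rho_F(U) \supseteq \Omega \cap \rho_F(T^{-1}) \neq \emptyset$.

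Fix $\lambda_0 \in \Omega \cap \rho_F(U)$ and let $C$ be its connected component in $\sigma(U)^\land \cap \rho_F(U)$. It remains to show $0 \notin C$; this is the main obstacle. Since $0 \in \rho(T^{-1})$ lies in a component $\Omega_0 \neq \Omega$ of the open set ${\rm int}\,\sigma(U)^\land$, disjointness of components forces $0 \notin \overline{\Omega}$, so the disjoint open sets $\Omega$ and $\mathbb C \setminus \overline{\Omega}$ would disconnect $C$ unless $C \cap \partial \Omega \neq \emptyset$. Hence the required claim reduces to $\partial \Omega \cap \rho_F(U) = \emptyset$, i.e., $\partial \Omega \subseteq \sigma_e(U)$. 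I would establish this following the template of Lemma~\ref{4.13}(2): transitivity of $T$ gives $\sigma_p(U) = \emptyset$, the cyclicity $T^{-1}x_0 \in C(U)$ (Lemma~\ref{4.10'}(vi)) combined with Proposition~\ref{3.11} bounds ${\rm codim\,ran}(U - \lambda) \leq 1$, and continuity of the index together with Lemma~\ref{4.10}(i) and Lemma~\ref{4.12} partitions $\rho_F(U)$ into the index-$(-1)$ component of $0$ (lying in $\sigma(U)$) and index-$0$ components contained in $\rho(U)$; either allocation for a boundary point $p \in \partial \Omega \cap \rho_F(U)$ turns out to be incompatible with the structure of $\sigma(U)^\land$ near $\Omega$ and $\Omega_0$. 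Once $\partial \Omega \subseteq \sigma_e(U)$ is granted, $0 \notin C$ follows, and Theorem~\ref{1.5} applied to $U$ produces the desired contradiction.
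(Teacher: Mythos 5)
Your proposal is correct and follows essentially the same route as the paper: argue by contradiction, use Property (C) together with Proposition \ref{7.2}, Sine's theorem (Proposition \ref{7.3}) and transitivity to force $\sigma_{T^{-1}}(z)=\sigma(T^{-1})$ for all $z\neq 0$, invoke Lemma \ref{7.7} to get $\sigma(U)^\land=\sigma(T^{-1})^\land$ together with $\rho_F(T^{-1})\subseteq\rho_F(U)$, and then conclude via Theorem \ref{4.14}. The only divergence is that you make explicit, via the reduction to $\partial\Omega\subseteq\sigma_e(U)$, why the component of $\sigma(U)^\land\cap\rho_F(U)$ meeting $\Omega$ avoids the origin --- a point the paper asserts without detail --- and the lemmas you cite (Lemma \ref{4.10}, Lemma \ref{4.12}, Proposition \ref{3.11}) do suffice to close that step.
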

\begin{proof} We will use Theorem \ref{5.8} to show this result.

Suppose, to the contrary, that $T$ is transitive. Since $T^{-1}$ is intransitive, there is $0\neq x_0\in X$ such that $\mathfrak N=[T^{-k}x_0]_{k\geq1}\neq X$.

In the case that there is $0\neq x\in \mathfrak N$ such that $\sigma_{T^{-1}}(x)\subsetneq \sigma(T^{-1})$,   write $F=\sigma_{T^{-1}}(x)$. Since   $T^{-1}$ admits Property ($C$), and since Property ($C$) implies Property ($A$) (Definition \ref{7.1}),  $F$ is nonempty closed in $\mathbb C$.  Let
\[
M(T^{-1},F)=\{z\in X:\,\sigma_{T^{-1}}(z)\subseteq F\}.
\]
Again  by Definition \ref{7.1}, $M(T^{-1},F)$ is a closed subspace such that $$M(T^{-1},F)\neq X.$$ By Proposition \ref{7.2} (3),
\[\sigma_{T^{-1}}(T^kx)\subset\sigma_{T^{-1}}(Tx)\subset M(T^{-1},F)\subsetneq\sigma(T^{-1}). \]
 By Proposition \ref{7.2} (1), the closed subspace $[T^kx]_{k\geq1}\subset M(T^{-1},F)$.
 We assert that  $M(T^{-1},F)$ is a nontrivial invariant subspace of $T$. Otherwise, due to Sine's theorem  \cite{Si} (i.e., Proposition \ref{7.3}),
 \[M(T^{-1},F)\supset\bigcup_{z\in [T^kx]_{k\geq1}}\sigma_{T^{-1}}(x).\]
 Since $T$ is transitive, $[T^kx]_{k\geq1}=X$. Therefore, $$\sigma_{T^{-1}}=\bigcup_{z\in [T^kx]_{k\geq1}}\sigma_{T^{-1}}(z)\subset M(T^{-1},F). $$ This is a contradiction!

 Thus, we can assume that $\sigma_{T^{-1}}(x)= \sigma(T^{-1})$ for $x\in X$. Since $T^{-1}$ is intransitive, there exists $0\neq x_0\in X$ such that $\sigma_{T^{-1}}(x_0)=\sigma(T^{-1})$
 and $\mathfrak N=[T^{-k}x_0]_{k\geq1}\neq X$. Clearly, $\mathfrak N\in{\rm Lat}T^{-1}$. Let $U=T^{-1}|_\mathfrak N$. By the assumption that $T$ is transitive,  $\sigma_p(U)=\emptyset$. By  Theorem \ref{7.5}, $U$ admits Property ($A$). Consequently, for each $x\in\mathfrak N$, $\sigma_U(x)\neq\emptyset.$

By Lemma~\ref{7.7},
 \begin{equation}\label{7.10} \sigma_{T^{-1}}(x)\subset\sigma_U(x),\;\;{\rm for\; all}\; x\in\mathfrak N \end{equation}
 and
 \begin{equation}\label{7.12}
\sigma(T^{-1})^\land=\sigma(U)^\land.
\end{equation}

Lemma~\ref{2.8}, ~(\ref{7.12}) and the assumption that there exists a connected component $\Omega$  of ${\rm int}\sigma(T^{-1})^\land$ off the origin such that $\Omega\cap\rho_F(T^{-1})\neq \emptyset$  deduce that
  $\sigma(T^{-1}|_\mathfrak N)^\land\cap \rho_F(T^{-1}|_\mathfrak N)$ contains  two connected components.
    By  Theorem \ref{4.14}, $T$ is  intransitive. This is a contradiction to  that $T$ is  transitive.
\end{proof}
\section{Proof of Theorem 1.7}

Before proving Theorem \ref{1.8}, we first recall the definition of strictly cyclic vectors of an operator $T$.
\begin{definition}\label{14.1}
Let $T\in \mathfrak B(X)$ and let $\mathfrak A(T)$ be the closed subalgebra of $\mathfrak B(X)$ generated by $T$ and $I$. A vector $\xi\in$ is called a strictly cyclic vector of $T$ if $\{S\xi:\, S\in\mathfrak A(T)\}=X$. An invariant subspace $Y\subseteq X$ of $T$ is called a strictly cyclic invariant subspace if $T|_Y\in \mathfrak B(Y)$ has a strictly cyclic vector.
\end{definition}

The following lemma is due to B. Barnes~ \cite[Corollary 3]{Bar}.
\begin{lemma}\label{14.2}
Suppose that $X$ is a complex separable Banach space,  and that $T\in \mathfrak B(X)$  admits a strictly cyclic vector. Then $\sigma_p(T^*)=\sigma(T^*)$.
\end{lemma}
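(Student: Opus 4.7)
The plan is to transport a Banach algebra structure onto $X$ and invoke Gelfand theory. Let $\xi$ be a strictly cyclic vector for $T$.

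First, the evaluation map $\Phi\colon\mathfrak{A}(T)\to X$, $S\mapsto S\xi$, is a bounded linear surjection between Banach spaces. By the open mapping theorem, its kernel $J=\{S\in\mathfrak{A}(T):S\xi=0\}$---which is a closed two-sided ideal of $\mathfrak{A}(T)$ by commutativity---gives a Banach-space isomorphism $\mathfrak{A}(T)/J\cong X$. Transport the quotient Banach algebra structure through this isomorphism: $X$ becomes a commutative unital Banach algebra (under an equivalent norm) with unit $\xi$ and product $(S_1\xi)(S_2\xi)=(S_1S_2)\xi$. Under this algebra structure, the operator $T\in\mathcal{B}(X)$ acts as multiplication by $t:=T\xi$; consequently, the operator spectrum $\sigma(T)$ coincides with the algebra spectrum $\sigma_X(t)$, because invertibility of an element of a Banach algebra and invertibility of the corresponding multiplication operator are equivalent.

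Next, apply Gelfand theory: since $X$ is a commutative unital Banach algebra, for every $\lambda\in\sigma_X(t)=\sigma(T)$ there is a character (nonzero multiplicative linear functional) $\chi\colon X\to\mathbb{C}$ with $\chi(t)=\lambda$. Automatically $\chi\in X^*$, and multiplicativity gives
\[ (T^*\chi)(y)=\chi(Ty)=\chi(t\cdot y)=\chi(t)\chi(y)=\lambda\chi(y) \quad\text{for all } y\in X, \]
so $T^*\chi=\lambda\chi$. Since $\chi(\xi)=1$, $\chi\neq 0$, and hence $\lambda\in\sigma_p(T^*)$. Combined with the trivial inclusion $\sigma_p(T^*)\subseteq\sigma(T^*)=\sigma(T)$ (Proposition~\ref{4.7}), this yields $\sigma_p(T^*)=\sigma(T^*)$.

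The only point requiring care is the structural transfer: one must check that $J$ is a closed two-sided ideal---immediate from commutativity of $\mathfrak{A}(T)$---and that the quotient norm on $X\cong\mathfrak{A}(T)/J$ is equivalent to the original, which is exactly the open mapping theorem (so characters of the induced algebra structure are bona fide elements of $X^*$). Everything else is bookkeeping with the Gelfand transform.
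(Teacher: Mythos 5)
Your proof is correct and complete; every step (the closedness of the ideal $J$, the open-mapping transfer of the quotient algebra structure, the identification $\sigma(T)=\sigma_X(t)$ for multiplication by $t$ in a commutative unital Banach algebra, and the production of eigenvectors of $T^*$ from characters) checks out, and separability is not even needed. The paper itself gives no proof of this lemma---it is quoted from Barnes \cite[Corollary 3]{Bar}---and your argument is precisely the standard one underlying Barnes's result, so nothing further is required.
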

For more information related to strictly cyclic vectors, we refer the reader to D.A. Herrero \cite{Her3}.

{The following lemma is a consequence of Proposition~\ref{2.5}.
\begin{lemma}\label{14.3'} Let $X$ be a complex Banach space and $T\in \mathcal{B}(X)$ be an invertible operator. If $[{\rm ran}(T^{-1}-\lambda)]\neq X$, then $T$¡¡is intransitive.
\end{lemma}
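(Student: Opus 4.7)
The plan is to convert the hypothesis that $[\operatorname{ran}(T^{-1}-\lambda)]\neq X$ into the existence of an eigenvalue for $T^{\ast}$, and then invoke Proposition~\ref{2.5}.

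First I would apply the Hahn--Banach theorem: since $M=[\operatorname{ran}(T^{-1}-\lambda)]$ is a proper closed subspace of $X$, there exists a nonzero $x^{\ast}\in X^{\ast}$ with $x^{\ast}|_{M}=0$. Unwinding the annihilation condition gives
\[
\langle x^{\ast},(T^{-1}-\lambda)x\rangle=0\quad\text{for every }x\in X,
\]
which is equivalent to $(T^{-1})^{\ast}x^{\ast}=\lambda x^{\ast}$, i.e., $\lambda\in\sigma_{p}((T^{-1})^{\ast})$.

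Next I would handle the value of $\lambda$. Note $\lambda\neq 0$: otherwise we would have $[\operatorname{ran}(T^{-1})]\neq X$, contradicting the surjectivity of the invertible operator $T^{-1}$. Using the identity $(T^{-1})^{\ast}=(T^{\ast})^{-1}$ (Proposition~\ref{4.7}), the eigenvalue equation rewrites as $T^{\ast}x^{\ast}=\lambda^{-1}x^{\ast}$, so that $\lambda^{-1}\in\sigma_{p}(T^{\ast})$. In particular, $\sigma_{p}(T)\cup\sigma_{p}(T^{\ast})\neq\emptyset$.

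Finally, Proposition~\ref{2.5} applies directly and yields a common nontrivial invariant subspace for $T$ and $T^{-1}$; in particular $T$ is intransitive. There is essentially no obstacle: the only subtlety is the tiny book-keeping step of ruling out $\lambda=0$ and translating between $(T^{-1})^{\ast}$-eigenvalues and $T^{\ast}$-eigenvalues, after which Proposition~\ref{2.5} closes the argument.
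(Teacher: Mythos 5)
Your proof is correct and takes exactly the route the paper intends: the paper states this lemma without proof as ``a consequence of Proposition~\ref{2.5}'', and your Hahn--Banach step producing a nonzero $x^{*}$ with $(T^{-1})^{*}x^{*}=\lambda x^{*}$, hence (after noting $\lambda\neq 0$ and $(T^{-1})^{*}=(T^{*})^{-1}$) a point of $\sigma_{p}(T^{*})$, is precisely the deduction needed before invoking that proposition. No gaps.
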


\begin{lemma}\label{14.3} Let $X$ be a complex Banach space and $T\in \mathcal{B}(X)$ be an invertible transitive operator. If
there exists $0\neq x_0\in X$ such that $$\mathfrak{N}=[T^{-k}x_0]_{k\geq 1}\subsetneq X$$ and $U=T^{-1}|_{\mathfrak{N}}$ is strictly cyclic, then $\sigma(U)\subseteq \sigma((T^{-1})^{\mathfrak{N}^\perp})$.
\end{lemma}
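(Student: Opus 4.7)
The plan is to prove the inclusion by contradiction: starting from $\lambda\in\sigma(U)\setminus\sigma((T^{-1})^{\mathfrak N^\perp})$ I would manufacture a nonzero eigenvector of $(T^{-1})^*$, which contradicts the transitivity of $T$.

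Before beginning I would record two ingredients. First, since $T$ is transitive it has no nontrivial invariant subspace, and Proposition~\ref{2.5} applied to the invertible operator $T^{-1}$ (whose algebraic inverse is $T$) forces
\[
\sigma_p(T^{-1})=\sigma_p((T^{-1})^*)=\emptyset,
\]
for otherwise $T^{-1}$ and $T$ would share a common nontrivial invariant subspace. Second, because $U\in\mathfrak B(\mathfrak N)$ is strictly cyclic, Lemma~\ref{14.2} yields $\sigma(U)=\sigma(U^*)=\sigma_p(U^*)$. I would also exploit the standard duality $\bigl((T^{-1})^{\mathfrak N}\bigr)^*=(T^{-1})^*|_{\mathfrak N^\perp}$, which is available since $\mathfrak N\in{\rm Lat}(T^{-1})$ implies $\mathfrak N^\perp\in{\rm Lat}((T^{-1})^*)$; in particular $\sigma((T^{-1})^{\mathfrak N^\perp})=\sigma((T^{-1})^*|_{\mathfrak N^\perp})$.

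Given $\lambda\in\sigma(U)$, pick $\phi\in\mathfrak N^*\setminus\{0\}$ with $U^*\phi=\lambda\phi$, and use Hahn-Banach to extend $\phi$ to some $\tilde\phi\in X^*$. Then $\tilde\phi\notin\mathfrak N^\perp$, since $\tilde\phi|_{\mathfrak N}=\phi\neq 0$. Pairing against an arbitrary $y\in\mathfrak N$ gives
\[
\langle((T^{-1})^*-\lambda)\tilde\phi,\,y\rangle=\langle\phi,\,Uy-\lambda y\rangle=0,
\]
so $((T^{-1})^*-\lambda)\tilde\phi\in\mathfrak N^\perp$. Now suppose, toward a contradiction, that $\lambda\notin\sigma((T^{-1})^{\mathfrak N^\perp})$. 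Then $(T^{-1})^*|_{\mathfrak N^\perp}-\lambda$ is invertible on $\mathfrak N^\perp$, so there is $\psi\in\mathfrak N^\perp$ with $((T^{-1})^*-\lambda)\psi=((T^{-1})^*-\lambda)\tilde\phi$. Setting $\eta=\tilde\phi-\psi$, we have $(T^{-1})^*\eta=\lambda\eta$, and $\eta\neq 0$ since $\tilde\phi\notin\mathfrak N^\perp$ while $\psi\in\mathfrak N^\perp$. This places $\lambda\in\sigma_p((T^{-1})^*)$, contradicting $\sigma_p((T^{-1})^*)=\emptyset$. Therefore $\sigma(U)\subseteq\sigma((T^{-1})^{\mathfrak N^\perp})$.

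The main technical obstacle is the duality identification $\bigl((T^{-1})^{\mathfrak N}\bigr)^*=(T^{-1})^*|_{\mathfrak N^\perp}$, together with verifying that the Hahn-Banach lift lies outside $\mathfrak N^\perp$; these are what bridge a point-spectrum fact about $U^*$ on the invariant subspace $\mathfrak N$ with an invertibility statement about the quotient operator. Once those are in place, the argument is a short manipulation combining strict cyclicity (via Barnes) with the transitivity-forced emptiness of $\sigma_p((T^{-1})^*)$.
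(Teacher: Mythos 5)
Your proposal is correct and follows essentially the same route as the paper: both use Barnes' theorem (Lemma~\ref{14.2}) to turn $\lambda\in\sigma(U)$ into an eigenvalue of $U^*$, identify the relevant operators by the restriction/quotient duality $\bigl((T^{-1})^{\mathfrak N}\bigr)^*=(T^{-1})^*|_{\mathfrak N^\perp}$ and $U^*\cong\bigl((T^{-1})^*\bigr)^{\mathfrak N^\perp}$, and then subtract a suitable element of $\mathfrak N^\perp$ (your $\psi$, the paper's $z_1$) to manufacture a genuine eigenvector of $(T^{-1})^*$, contradicting transitivity via Proposition~\ref{2.5}. The only cosmetic difference is that you realize the class in $X^*/\mathfrak N^\perp$ as a Hahn--Banach lift of the eigenvector of $U^*$ rather than working in the quotient directly.
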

\begin{proof}
Since $U$ is strictly cyclic, $\sigma_p(U^*)=\sigma(U^*)$ (Lemma~\ref{14.2}). Note that $U^*=((T^*)^{-1})^{\mathfrak{N}^\perp}$ and $((T^{-1})^{\mathfrak{N}^\perp})^*=(T^*)^{-1}|_{\mathfrak{N}^\perp}$. Then it suffices to show that \begin{equation}\label{14.3'}\sigma(((T^*)^{-1})^{\mathfrak{N}^\perp})\subseteq \sigma((T^*)^{-1}|_{\mathfrak{N}^\perp}).\end{equation} By Lemma~\ref{14.2}, there exists  $\lambda\in \sigma_p(((T^*)^{-1})^{\mathfrak{N}^\perp})\bigcap \rho((T^*)^{-1}|_{\mathfrak{N}^\perp})$. Consequently, there exists $0\neq z+\mathfrak{N}^\perp\in X^*/\mathfrak{N}^\perp$ such that
\[
\left(((T^*)^{-1})^{\mathfrak{N}^\perp}-\lambda \right)(z+\mathfrak{N}^\perp)=0.
\]
 Therefore, $((T^*)^{-1}-\lambda)z\in \mathfrak{N}^\perp$. Since $\lambda\in \rho((T^*)^{-1}|_{\mathfrak{N}^\perp})$, there exists  $z_1\in \mathfrak{N}^\perp$ such that
\[
((T^*)^{-1}-\lambda)z_1=-((T^*)^{-1}-\lambda)z.
\]
Equivalently,
\[
((T^*)^{-1}-\lambda)(z_1-z_0)=0.
\]
This says that $\ker((T^*)^{-1}-\lambda)\neq \{0\}$. Hence, $[{\rm ran}(T^{-1}-\bar{\lambda})]\neq X$. By Lemma~\ref{14.3'}, $T$ is intransitive, and this is a contradiction.

\end{proof}

}

Now, we are ready to show Theorem \ref{1.7}. Let us restate it as follows.
\begin{theorem}\label{14.4}
 Suppose that $T\in \mathfrak B(X)$ is invertible and  that ${\rm int}\sigma(T^{-1})^\land$ contains  two components $\Omega_0$,  $\Omega_1$ with $0\in\Omega_0$  and $\Omega_1\bigcap\rho(T^{-1})\neq\emptyset$. If $T^{-1}$ has a proper strictly cyclic invariant subspace, then $T$ is intransitive.
\end{theorem}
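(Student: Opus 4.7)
My plan is to reduce Theorem \ref{14.4} directly to Theorem \ref{5.8} (the restatement of Theorem \ref{1.7}), since the spectral hypothesis on ${\rm int}\sigma(T^{-1})^\land$ is already in precisely the form required there. What is missing is a witness: a nonzero noncyclic vector $x_0$ of $T^{-1}$ for which $\sigma(T^{-1})\subseteq \sigma((T^{-1})^\mathfrak{N})$ with $\mathfrak{N}=[T^{-k}x_0]_{k\geq 1}$. The whole point of the strict cyclicity assumption is to manufacture such a vector.

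I would argue by contradiction: assume $T$ is transitive. Let $M\subsetneq X$ be a proper strictly cyclic invariant subspace of $T^{-1}$, and pick a strictly cyclic vector $\xi\in M$ for $T^{-1}|_M$. Set $x_0 := T\xi$. Then
\[
\mathfrak{N}:=[T^{-k}x_0]_{k\geq 1}=[T^{-j}\xi]_{j\geq 0}=M,
\]
the last equality holding because polynomial cyclicity of $\xi$ for $T^{-1}|_M$ already implies $[T^{-j}\xi]_{j\geq 0}=M$. In particular $x_0\neq 0$ is noncyclic for $T^{-1}$, and $U=T^{-1}|_\mathfrak{N}$ is strictly cyclic.

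Now I apply Lemma \ref{14.3}, which (using transitivity of $T$ and strict cyclicity of $U$, via the Barnes fact that $\sigma_p(U^*)=\sigma(U^*)$) gives $\sigma(U)\subseteq \sigma((T^{-1})^{\mathfrak{N}})$. Combining with the three-operator spectral inclusion of Lemma \ref{5.2},
\[
\sigma(T^{-1})\subseteq \sigma(U)\cup\sigma((T^{-1})^\mathfrak{N})=\sigma((T^{-1})^\mathfrak{N}).
\]
Every hypothesis of Theorem \ref{5.8} is now in place, so that theorem delivers the conclusion that $T$ is intransitive, contradicting the standing assumption.

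The main obstacle is essentially contained in Lemma \ref{14.3}: translating strict cyclicity into the spectral inclusion $\sigma(U)\subseteq \sigma((T^{-1})^\mathfrak{N})$. Once that is in hand, the rest is a clean book-keeping exercise combining Lemma \ref{5.2} with Theorem \ref{5.8}. The minor bookkeeping point that needs care is making sure the cyclic vector for $T^{-1}|_M$ can be massaged into the form $x_0=T\xi$ that makes $\mathfrak{N}$ in the form required by the earlier theorems, but the invertibility of $T$ handles this without incident.
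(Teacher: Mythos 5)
Your proposal is correct and takes essentially the same route as the paper: obtain the noncyclic vector $x_0$ from the strictly cyclic invariant subspace so that $T^{-1}x_0$ is strictly cyclic for $U=T^{-1}|_{\mathfrak N}$, apply Lemma \ref{14.3} (via Barnes' Lemma \ref{14.2}) to get $\sigma(U)\subseteq\sigma((T^{-1})^{\mathfrak N})$, and conclude with Theorem \ref{5.8}. Your explicit appeal to Lemma \ref{5.2} to upgrade this inclusion to $\sigma(T^{-1})\subseteq\sigma((T^{-1})^{\mathfrak N})$ is a bookkeeping step the paper leaves implicit, not a different argument.
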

\begin{proof} Suppose, to the contrary, that $T$ is transitive.
Since $T^{-1}$ has a proper strictly cyclic invariant subspace, there is $0\neq x_0\in X$ such that $$\mathfrak N=[T^{-k}x_0]_{k\geq1}\in\mbox{Lat}T^{-1}$$ and $x_1\equiv T^{-1}x_0$ is a strictly cyclic vector of $U\equiv T^{-1}|_\mathfrak N$. By Lemma~\ref{14.2}, $$\sigma_p(U^*)=\sigma(U^*).$$
By Lemma \ref{14.3}, $\sigma(U)\subset\sigma((T^{-1})^\mathfrak N)$. Applying Theorem \ref{5.8}, we obtain that  that $T$ is intransitive, and this is a contradiction!
\end{proof}
\section{Proof of Theorem 1.8}
A remarkable result of C.J. Read \cite{R4} states that for an infinite dimensional separable complex Banach space $X$, if $X=\ell_1\oplus Y$ for some separable Banach space $Y$ then $\mathfrak B(X)$ admits an operator $T\in\mathfrak B(X)$ without any non-trivial invariant subspace. With the help of this result, we  show the following theorem.

\begin{theorem}
A sufficient and necessary condition for an infinite dimensional $L_1(\Omega,\sum,\mu)$ admitting an operator $T\in\mathfrak B(L_1(\Omega,\sum,\mu))$ without any nontrivial subspace is that the measure space
$(\Omega,\sum,\mu)$ is $\sigma$-finite.
\end{theorem}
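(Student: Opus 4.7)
The proof naturally splits into sufficiency and necessity, and the strategy for each is quite different.

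For sufficiency, assume $(\Omega,\Sigma,\mu)$ is $\sigma$-finite with $L_1(\mu)$ infinite dimensional. The plan is to produce a complemented copy of $\ell_1$ inside $L_1(\mu)$ and then invoke Read's theorem cited in the introduction (\cite{R2,R3}): every infinite dimensional separable Banach space that contains $\ell_1$ as a complemented subspace carries a bounded operator with no nontrivial invariant subspaces. Decompose $\mu=\mu_a+\mu_c$ into its purely atomic and atomless parts, so $L_1(\mu)=L_1(\mu_a)\oplus L_1(\mu_c)$. If $\mu_a$ has infinitely many atoms, then $L_1(\mu_a)\cong\ell_1$ already sits as a complemented subspace via the natural band projection. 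Otherwise $\mu_a$ contributes only a finite dimensional summand, so $L_1(\mu_c)$ must be infinite dimensional; using $\sigma$-finiteness and atomlessness, pick a countable measurable partition $\{A_n\}$ of the support of $\mu_c$ with $0<\mu(A_n)<\infty$. The closed span of $\{\mathbf 1_{A_n}/\mu(A_n)\}$ is isometric to $\ell_1$, and the conditional expectation onto the $\sigma$-algebra $\sigma(\{A_n\})$ is a norm-one projection of $L_1(\mu)$ onto exactly this span. In either case Read's theorem delivers the desired operator.

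For necessity I would argue the contrapositive: if $(\Omega,\Sigma,\mu)$ is not $\sigma$-finite then every $T\in\mathfrak B(L_1(\mu))$ has a nontrivial invariant subspace. After passing to the semi-finite part of $\mu$ (which does not change $L_1(\mu)$), the key point is that $L_1(\mu)$ is non-separable. Apply Zorn's lemma to select a maximal disjoint family $\mathcal F$ of measurable sets of finite positive measure. If $\mathcal F$ were countable, then $A=\bigcup\mathcal F$ would be $\sigma$-finite, and by maximality together with semi-finiteness $\mu(A^c)=0$, making $\mu$ itself $\sigma$-finite --- a contradiction. Hence $\mathcal F$ is uncountable, and the family $\{\mathbf 1_E/\mu(E):E\in\mathcal F\}$ is an uncountable $L_1$-distance-$2$ separated set, so $L_1(\mu)$ is non-separable. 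Now for any nonzero $f\in L_1(\mu)$ the cyclic subspace $[T^nf:n\ge 0]$ is separable, hence a proper closed invariant subspace of $T$, and it is nonzero since it contains $f$.

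The main obstacle is the sufficiency step: producing the complemented $\ell_1$ in the atomless case requires the partition argument above together with the implicit separability of $L_1(\mu)$ (which Read's theorem requires). Separability is available under the paper's standing assumption that the ambient Banach space is separable; concretely one replaces $\Sigma$ by the $\mu$-completion of the countably generated sub-$\sigma$-algebra carried by a countable dense subset of $L_1(\mu)$, which leaves $L_1(\mu)$ unchanged while guaranteeing a countably generated measure algebra. The remaining pieces --- the band decomposition $L_1(\mu)=L_1(\mu_a)\oplus L_1(\mu_c)$, the isometric and complemented nature of the $\{\mathbf 1_{A_n}/\mu(A_n)\}$ span, and the Zorn-style dichotomy in the necessity direction --- are standard and should proceed without serious difficulty.
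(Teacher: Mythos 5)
Your proof is correct and follows essentially the same route as the paper: a countable partition of $\Omega$ into sets of finite positive measure gives an isometric copy of $\ell_1$ spanned by the normalized indicator functions, complemented via the norm-one conditional-expectation projection, after which Read's theorem applies; necessity in both cases comes from the non-separability of $L_1$ over a non-$\sigma$-finite measure together with the fact that every operator on a non-separable space has a (separable, hence proper) nontrivial cyclic invariant subspace. Your atomic/atomless case split is unnecessary --- the single partition argument covers both cases --- but otherwise the two arguments coincide, with yours merely spelling out details (the maximal disjoint family, the cyclic-subspace step) that the paper leaves implicit.
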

\begin{proof}
Sufficiency. Note that if $(\Omega,\sum,\mu)$ is $\sigma$-finite, then $L_1(\Omega,\sum,\mu)$ is separable.  Then by a theorem of Read \cite{R4}, it suffices to show that $\ell_1$ is a
complemented in $X\equiv L_1(\Omega,\sum,\mu)$.
Since $(\Omega,\sum,\mu)$ is $\sigma$-finite, and since $L_1(\Omega,\sum,\mu)$ is infinite dimensional,
 there is a $\sum$-partition  $\{E_n\}$ of $\Omega$ so that $0<\mu(E_n)<\infty$ for all $n\in\N$. Therefore,
\begin{equation}\nonumber
f=\sum_{n=1}^\infty f\cdot\chi_{E_n},\;\forall f\in X,\end{equation}
where $\chi_{E_n}$ is the set indicator function of $E_n$ for all $n\in\N$.
 Let $X_0=[\chi_{E_n}]$, the norm closure of the linear hull span$\{\chi_{E_n}\}_{n=1}^\infty$ in $X$.
Then each $f\in X_0$ can be represented as
\[f=\sum_{n=1}^\infty\alpha_n\chi_{E_n},\]
where $\alpha_n=\int_{E_n}fd\mu,\;\;n=1,2,\cdots$.

Next, we define
\[Sf=\sum_{n=1}^\infty\beta_ne_n,\;f=\sum_{n=1}^\infty\alpha_n\chi_{E_n}\in X_0,\]
where $\{e_n\}$ is the standard unit vector basis of $\ell_1$ and
\[\beta_n=\mu(E_n)\cdot\alpha_n, \;n=1,2,\cdots.\]
Then it is easy to check that  $S$ is a   linear surjective isometry  from $X_0$ to $\ell_1$.

Finally, let $P: X\rightarrow X_0$ defined for $f\in X$ by
\begin{equation}\nonumber
Pf=\sum_{n=1}^\infty \alpha_n\cdot\chi_{E_n},\;\forall f\in X,\end{equation}
where $\alpha_n=\int_{E_n}fd\mu,\;\;n=1,2,\cdots$.
Clearly, $P: X\rightarrow X_0$ is a projection of norm one. This entails that $X_0$ is 1-complemented in $X$.
Consequently, $\ell_1$ is 1-complemented in $X$.

Necessity. Since every bounded operator on a non-separable Banach space always admits nontrivial invariant subspace, and since $L_1(\Omega,\sum,\mu)$ is separable if and only if  $(\Omega,\sum,\mu)$ is $\sigma$-finite, the necessity follows.
\end{proof}

\section{Proof of Theorem 1.9}
Based on another beautiful result of Read \cite{R5} that  an infinite dimensional separable complex Banach space $X$  of the form $X=c_0\oplus Y$ for some separable $Y$ always admits an operator $T\in\mathfrak B(X)$ without any non-trivial invariant subspace, and the Sobczyk theorem \cite{So} that $c_0$ is $2$-complemented in every separable Banach space geometrically containing  it, in this section, we show the following theorem.

\begin{theorem} Let $K$ be a complete metric space such that the space $C(K)$ of all bounded continuous complex valued functions is infinite dimensional. Then
 a sufficient and necessary condition for $C(K)$ admitting an operator $T\in\mathfrak B(C(K))$ without any nontrivial subspace is that the metric space $K$
 is compact.
\end{theorem}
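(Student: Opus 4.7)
The proof will follow the same two-part structure as Theorem 1.9, using the Read--Sobczyk combination hinted at in the paragraph preceding the statement. The necessity uses separability, and the sufficiency uses an isometric embedding of $c_0$ into $C(K)$ followed by Sobczyk's complementation theorem and Read's existence theorem \cite{R5}.

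\emph{Necessity.} Suppose $K$ is not compact. Since $K$ is a complete metric space, non-compactness is equivalent to the failure of total boundedness. Therefore there exist $\varepsilon>0$ and an infinite set $\{x_n\}_{n\geq 1}\subset K$ with $d(x_n,x_m)\geq\varepsilon$ whenever $n\neq m$. Set
\[
g_n(x)=\max\Big\{0,\;1-\tfrac{2}{\varepsilon}\, d(x,x_n)\Big\},\qquad n\in\NNN.
\]
These are bounded continuous functions with pairwise disjoint supports and $g_n(x_n)=1$. For every $A\subseteq\NNN$ the function $h_A=\sup_{n\in A}g_n$ lies in $C(K)$ and $\|h_A-h_B\|_\infty\geq 1$ for $A\neq B$, so $C(K)$ is non-separable. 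Since every orbit closure $[T^kx]$ of any $T\in\mathfrak B(C(K))$ is separable and $T$-invariant, it must be a proper invariant subspace. Hence no such $T$ can be transitive.

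\emph{Sufficiency.} Suppose $K$ is compact. Then $C(K)$ is separable, and because $\dim C(K)=\infty$ the space $K$ must be infinite, hence has an accumulation point $x_0$. Pick a sequence $\{x_n\}_{n\geq 1}\subseteq K\setminus\{x_0\}$ of distinct points with $d(x_n,x_0)<2^{-n}$. Choose radii $0<r_n<d(x_n,x_0)/3$ so that the open balls $B(x_n,r_n)$ are pairwise disjoint and none contains $x_0$. By Urysohn's lemma, select continuous $f_n\colon K\to[0,1]$ with $f_n(x_n)=1$ and $\mathrm{supp}(f_n)\subseteq B(x_n,r_n)$. For any $(a_n)\in c_0$, the series $\sum_n a_n f_n$ converges uniformly (because the supports are disjoint) to a function that is continuous at each $y\neq x_0$ (only finitely many terms are nonzero nearby) and at $x_0$ (given $\eta>0$, pick $N$ with $|a_n|<\eta$ for $n\geq N$; outside the compact set $\bigcup_{n<N}\mathrm{supp}(f_n)$ the sum is bounded by $\eta$). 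Moreover $\|\sum_n a_n f_n\|_\infty=\sup_n|a_n|$, so the closed linear span $Z\equiv [f_n]$ is isometric to $c_0$. By Sobczyk's theorem \cite{So}, $Z$ is $2$-complemented in the separable space $C(K)$, giving a decomposition $C(K)=Z\oplus Y\cong c_0\oplus Y$ for some separable Banach space $Y$. Read's theorem \cite{R5} then furnishes an operator $T\in\mathfrak B(C(K))$ having no nontrivial invariant subspace.

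\emph{Main obstacle.} The two cited theorems (Sobczyk and Read \cite{R5}) do the heavy lifting, so the only genuine work is producing a complemented copy of $c_0$ inside $C(K)$ for every infinite compact metric space $K$. The one delicate point is verifying that the functions $\sum a_n f_n$ with $(a_n)\in c_0$ are continuous at the accumulation point $x_0$; this is what forces the choice of radii $r_n$ small enough that $x_0\notin\overline{\bigcup_n\mathrm{supp}(f_n)}^{\,\setminus\{x_0\}}$ and is ensured by the shrinking of $r_n$ together with $x_n\to x_0$. Everything else is formal bookkeeping.
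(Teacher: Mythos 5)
Your proposal is correct, and its overall skeleton (Read's theorem \cite{R5} plus Sobczyk's theorem for sufficiency, non-separability for necessity) matches the paper's; but the way you produce the complemented copy of $c_0$ is genuinely different and, I think, cleaner. The paper splits the sufficiency into two cases: for uncountable compact metric $K$ it invokes the universality of $C(K)$ for separable Banach spaces to get a copy of $c_0$, and only for countable $K$ does it build an explicit $c_0$ out of a convergent sequence of isolated points. Your construction --- disjointly supported Urysohn bumps $f_n$ centered at distinct points $x_n$ converging to an accumulation point $x_0$, with radii shrunk so the supports are pairwise disjoint and stay away from $x_0$ --- gives a single isometric copy of $c_0$ that works uniformly for every infinite compact metric $K$, so the countable/uncountable dichotomy and the appeal to universality disappear. (One small simplification available to you: once the supports are disjoint and $(a_n)\in c_0$, the series $\sum_n a_n f_n$ is uniformly Cauchy with tail norm $\sup_{n>N}|a_n|$, so continuity of the sum, including at $x_0$, is automatic from uniform convergence; your separate pointwise check at $x_0$ is not needed.) In the necessity direction the paper merely asserts that $\ell_\infty$ embeds isometrically into $C(K)$ when $K$ is non-compact and concludes non-separability; you instead exhibit an explicit uncountable $1$-separated family $\{h_A\}_{A\subseteq\NNN}$ built from an $\varepsilon$-separated sequence (which exists precisely because a complete non-compact metric space fails to be totally bounded), and then observe that on a non-separable space every orbit closure of a nonzero vector is a separable, hence proper, nontrivial invariant subspace --- the same conclusion the paper reaches, with the details actually written out. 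Both directions of your argument are sound.
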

\begin{proof}
Sufficiency.
Since $K$
 is a compact metric space,  $C(K)$ is separable. Since $C(K)$ is infinite dimensional, the cardinality of $K$ is infinite. Let $K^\prime$ be the set of all accumulation points of $K$.  If $K$ is uncountable, then $C(K)$ is universal for the class of separable Banach spaces over $\mathbb C$ (See, for instance, \cite[p. 229, Corollary]{Ho}), that it geometrically contains every separable Banach space, hence, $c_0$. Therefore, by the Sobczyk theorem \cite{So}, $c_0$ is $2$-complemented in $C(K)$. Thus, the sufficiency follows from Read's theorem \cite{R5}. If $K$ is countable, then   $K=(K\setminus K^\prime)\bigcup K^\prime$ and $K\setminus K^\prime$ is a countable infinite subset of $K$. Fix any convergent subsequence $\{x_n\}_{n=1}^\infty\subset K\setminus K^\prime $, say, $x_n\rightarrow x_0$. Let \[C_0(K)=\big\{f: K\rightarrow\mathbb F, f(x_n)\rightarrow0, {\rm as\;}n\rightarrow\infty;\;f(x)=0,\;x\in K\setminus\{x_n\}_{n=1}^\infty\big\}.\]
 Then it is easy to observe that $C_0(K)$ is a closed subspace of $C(K)$ isometric to $c_0$. Therefore, again by the Sobczyk theorem \cite{So}, $c_0$ is $2$-complemented in $C(K)$.
 Therefore, the sufficiency follows from the previously mentioned theorem of Read \cite{R5}.

Necessity. Note that every bounded linear operator on a nonseparable Banach space admits a nontrivial invariant subspace. Since $K$ is a complete metric space,  and since that there is $T\in\mathfrak B(C(K))$ without any nontrivial subspace, $C(K)$ must be separable and infinite dimensional. Therefore, the cardinality of $K$ is infinite. If $K$ is not compact, then it is not difficult to observe that $\ell_\infty$ can be linearly isometrically embedded into $C(K).$ This contradicts to that $C(K)$ is separable.

\end{proof}


\bibliographystyle{amsalpha}

\begin{thebibliography}{AcBeRu}


\bibitem{Voi}{C. Apostol,  D. Voiculescu}, \emph{On a problem of Halmos,} Rev. Roumaine Math. Pures Appl. 19 (1974), 283-284.
\bibitem{Ar} S.A. Argyros, R. Haydon,
A hereditarily indecomposable $\mathscr L_\infty$-space that solves the scalar-plus-compact problem,
Acta Math. 206 (2011), no. 1, 1-54.
\bibitem{NKT}{N. Aronszajn and K. T. Smith, } \emph{Invariant subspaces of completely continuous operators,} Ann. Math. 60 (1954), 345-350.
\bibitem{at} A. Atzmon,
An operator without invariant subspaces on a nuclear Frechet space,
Ann. of Math. (2) 117 (1983), no. 3, 669-694.
\bibitem{Bar} {B. Barnes,} \emph{Operators with a strictly cyclic vector,} Proceedings of AMS, 41, no 2, 1973, 480-486.
\bibitem{Bea} B. Beauzamy,
Un op\'{e}rateur sans sous-espace invariant: simplification de $\ell_1$ exemple de P. Enflo. (French) [An operator with no invariant subspace: simplification of the example of P. Enflo]
Integral Equations Operator Theory 8 (1985), no. 3, 314-384.
\bibitem{Brown} S. Brown, \emph{Hyponormal operators with thick spectra have invariant subspaces.}
Ann. of Math. (2) 125 (1987), no. 1, 93-103.


\bibitem{CFJ} {Y. Cao, J. Fang and C. Jiang,} \emph{K-groups of Banach algebras and strongly irreducible decompositions of operators}. J. Operator Theory 48 (2002), no. 2, 235-253.
\bibitem{clan} K.F. Clancey,  On the local spectra of seminormal operators. Proc. Amer. Math. Soc. 72 (1978), no. 3, 473-479.
\bibitem{CC} {C.C. Cowen, J. Long,} \emph{Some subnormal Toeplitz operators. } J. Reine Angew. Math. 351 (1984), 216-220.

\bibitem{CD}{M.J. Cowen and R. G. Douglas}, \emph{Complex geometry and operator theory}, Acta Math. 141 (1978), 187-261. 

\bibitem{En} {P. Enflo}, \emph{On the invariant subspace problem in Banach spaces}. Acta Math., vol. 158 (1987),  213-313.

\bibitem{DFP} {R.G. Douglas; C. Foias,  C. Pearcy, } \emph{Common cyclic vectors for an operator and its inverse.}  Acta. Sci. Math(Szeged) 71 (2005), no. 3-4, 733-739.

\bibitem{Dun}
{N. Dunford,} \emph{A survey of the theory of spectral operators.} Bull. Amer. Math. Soc., 64 (1958), 217-274.
\bibitem{Gil}{F. Gilfeather,}\emph{ Strong reducibility of operators}, Indiana Univ. Math. J. 22(1972), 393-397.
\bibitem{FSW}{P.A. Fillmore, J.G. Stampfli, J.P. Williams}, \emph{On the essential numerical range, the essential spectrum, and a problem of Halmos}. Acta Sci. Math. (Szeged) 33 (1972), 179-192.

\bibitem{Fog} S.R. Foguel,
A counterexample to a problem of Sz.-Nagy.
Proc. Amer. Math. Soc. 15 (1964), 788-790.
\bibitem{FJK}{C. Foias, I.B. Jung, E. Ko, C. Pearcy, } \emph{Transitive operators and a problem of Halmos. } Indiana Univ. Math. J. 56 (2007), no. 1, 119-133.

\bibitem{FJK2} {C. Foias, I.B. Jung, E. Ko, C. Pearcy}, \emph{Transitivity and structure of operator algebras with a metric property}. Indag. Math.  25 (2014), no. 1, 1-23.


\bibitem{Ha1}{D. Hadwin,} \emph{Weak completeness and invariant subspaces,} Michigan Math. J. 22 (1975), no.2, 171-173.

\bibitem{Ha2} {D. Hadwin,} \emph{Invariant subspaces of linear transformations,} Illinois J. Math, 19 (1975), no.4, 560-566.

\bibitem{Hal}{P.R. Halmos,} \emph{Ten problems in Hilbert space.} Bull. Amer. Math. Soc. 76 (1970), no. 5, 887-933.

\bibitem{Her}{D.A. Herrero,} \emph{On multicyclic operators.} Integral Equations Operator Theory 1 (1978), no. 1, 57-102.

\bibitem{Her2}{D.A. Herrero,} \emph{Approximation of Hilbert space operators.} Vol.1. Sec. Ed., Pitman Research Notes in Mathematics Series, 224, Inc., New York, 1989.
\bibitem{Her3}{D.A. Herrero,} \emph{Triangular strictly cyclic operators.} Integral equations and operator theory, Vol. 10 (1987), 297-303.
\bibitem{HJ} {D.A. Herrero, C. Jiang,} \emph{Limits of strongly irreducible operators, and the Riesz decomposition theorem.} Michigan Math. J. 37 (1990), no. 2, 283-291.
\bibitem{Ho} {R.B. Holmes}, \emph{Geometric functional analysis and its applications,} Springer-Verlag, New York Heidelberg Berlin, Graduate Texts in Math. Vol. 24, 1975.

\bibitem{JJ}{C. Jiang, K. Ji,} \emph{Similarity classification of holomorphic curves.} Adv. Math. 215 (2007), no. 2, 446-468.

\bibitem{JW}{ C. Jiang, Z. Wang,} \emph{Strongly irreducible operators on Hilbert space,} Pitman Research Notes in Mathematics Series, 389. Longman, Harlow, 1998. x+243 pp. ISBN: 0-582-30594-2

\bibitem{Jia}{Z.J. Jiang,} Topics in operator theory. Seminar reports in functional analysis [Chinese], Jilin University, 1979.
\bibitem{li} S.K. Li,  On the spectral subspaces of the nonnormal operators. (Chinese) Chinese Ann. Math. 3 (1982), no. 3, 303-308.

\bibitem{Lin}{Q. Lin}, \emph{Cyclicity of Cowen-Douglas Operators,} Acta Math. Sinica (N.S.) 4 (1988), no. 3, 284-288.

\bibitem{LN} {K.B. Laursen, H.M. Neumann,} An introduction to local spectral theory, London Mathematical Society monographs New Serires 2000, Clarendon press, Oxford.

\bibitem{Lom} {V.I. Lomonosov,} \emph{Invariant subspaces for the family of operators which commute
with a completely continuous operator,}  Funct Anal Its Appl 7, 213-214 (1973).

\bibitem{LS2} {V.I. Lomonosov, V.S. Shulman}, \emph{Halmos problems and related results in the theory of invariant subspaces}, Russia Mathematical Surveys, 73 (2018), no. 1(439), 35-98.

\bibitem{MP}{ M. Martin, M. Putinar,} \emph{Lectures on hyponormal operators,} Oper. Theory Adv. Appl., vol 39, Birkh\"{a}user Verlag, Basel.

\bibitem{MU}{V. \"{M}uller},\emph{Spectral theory of linear operators and spectral system in Banach algebras}, Second edition. Operator Theory: Advances and Applications, 139. Birkhser Verlag, Basel, 2007. x+439 pp. ISBN: 978-3-7643-8264-3.
\bibitem{Pis}{G. Pisier,} \emph{A polynomially bounded operator on Hilbert space which is not similar to a contraction. }J. Amer. Math. Soc. 10 (1997), no. 2, 351-369.

\bibitem{Ros}{M. Rosenblum}, \emph{On the operator equation $BX-XA=Q$,} Duke Uath. J. 23 (1956) 263-269.

\bibitem{RR2} {H. Radjavi, P. Rosenthal, } \emph{Invariant subspace}, Second edition. Dover Publications, Inc., Mineola, NY, 2003.

\bibitem{R1} {C.J. Read, }
A solution to the invariant subspace problem,
Bull. London Math. Soc. 16 (1984), no. 4, 337-401.
\bibitem{R2} {C.J. Read, }
A solution to the invariant subspace problem on the space $\ell_1$,
Bull. London Math. Soc. 17 (1985), no. 4, 305-317.
\bibitem{R3} {C.J. Read, } A short proof concerning the invariant subspace problem,
J. London Math. Soc. (2) 34 (1986), no. 2, 335-348.
\bibitem{R4} {C.J. Read, }
The invariant subspace problem for a class of Banach spaces. II. Hypercyclic operators.
Israel J. Math. 63 (1988), no. 1, 1-40.
\bibitem{R5} {C.J. Read, }
The invariant subspace problem on some Banach spaces with separable dual.
Proc. London Math. Soc. (3) 58 (1989), no. 3, 583¨C607.
\bibitem{Sa}  D. Sarason,
The $H_p$ spaces of an annulus,
Mem. Amer. Math. Soc. 56 (1965), 78 pp.
\bibitem{Si} R.C. Sine,   Spectral decomposition of a class of operators, Pacific J. Math. 14 (1964), 333-352.
\bibitem{So} A. Sobczyk,
Projection of the space $m$ on its subspace $c_0$,
Bull. Amer. Math. Soc. 47 (1941), 938-947.
\bibitem{St} {E.L. Stout,} \emph{The theory of uniform algebras,}  Bogden \& Quigley, Inc.,
Tarrytown-on-Hudson, N. Y., 1971.

\bibitem{S1}{S. Sun},  \emph{On Toeplitz operators in the $\theta$-class.} Sci. Sinica Ser. A 28 (1985), no. 3, 235-241.

\bibitem{S2}{S. Sun}, \emph{ Bergman shift is not unitarily equivalent to a Toeplitz operator,}  Kexue Tongbao 28 (1983), 1027-1030.
\bibitem{Sz}{B. Sz.-Nagy},
\emph{Completely continuous operators with uniformly bounded iterates (in Russian)},
Magyar Tud. Akad. Mat. Kutat\'{o} Int. Kzl. 4 (1959), 89-93.

\bibitem{Voi2}{D. Voiculescu}, \emph{A non-commutative Weyl-von Neumann theorem. }Rev. Roumaine Math. Pures Appl. 21 (1976), no. 1, 97-113.

\bibitem{Zou} Chengzu Zou, Some properties of decomposible operators, (Chinese) Adv. Math. 15(1986), no.1, 22-31.

\end{thebibliography}

\end{document}